\newtheorem{theorem}{Theorem}[section]
\newtheorem{lemma}[theorem]{Lemma}
\newtheorem{corollary}[theorem]{Corollary}
\newtheorem{proposition}[theorem]{Proposition}
\theoremstyle{definition}
\newtheorem{definition}[theorem]{Definition}
\newtheorem{example}[theorem]{Example}
\theoremstyle{remark}
\newtheorem{remark}[theorem]{Remark}
\numberwithin{equation}{section}
\def\F{\mathcal{F}}
\def\H{\mathcal{H}}
\def\T{\mathcal{T}}
\def\P{\textbf{P}}
\def\Q{\textbf{Q}}
\def\E{\mathcal{E}}
\def\C{\mathcal{C}}
\def\X{\mathcal{X}}
\def\Y{\mathcal{Y}}
\def\Ext{\mbox{Ext}}
\def\Hom{\mbox{Hom}}
\def\End{\mbox{End}}
\def\Ker{\mbox{Ker}\hspace{.01in}}
\def\Im{\mbox{Im}\hspace{.01in}}
\begin{document}

\title{The representation invariants of 2-term silting complexes }

\author{Yonggang Hu}
\address{College of Applied Sciences, Beijing University of Technology, Beijing, P. R. China, 100124}
\curraddr{D\'{e}partement de math\'{e}matiques\\
Universit\'{e} de Sherbrooke\\ Sherbrooke, Qu\'{e}bec, Canada, J1K 2R1}
\email{huyonggang@emails.bjut.edu.cn\\huxy2601@usherbrooke.ca}
\thanks{This work was supported by CSC grant number 201906540017, from the China Scholarship Council}


\subjclass[2000]{Primary 16G10, 13D05; Secondary 13D30, 18E30}

\date{\today.}


\keywords{Silting theory, Torsion pairs, Representation dimension, Tilting module}

\begin{abstract}
Let $A$ be a finite dimensional algebra over a field $k$ and $\P$ be a 2-term silting complex in $K^{b}(\text{proj}A)$. In this paper, we  investigate the representation dimension of $\text{End}_{D^{b}(A)}(\P)$ by using the silting theory. We show that if $\P$ is a separating silting complex with certain homological restriction, then rep.dim $A=$rep.dim $\text{End}_{D^{b}(A)}(\P)$. This gives  a proper generalization of the classical compare theorem of representation dimensions showed by Chen and Hu. It is well-known that $\text{H}^{0}(\P)$ is a tilting $A/\text{ann}_{A}(\P)$-module. We also show that rep.dim $\text{End}_{A}(\text{H}^{0}(\P))=$rep.dim $A/\text{ann}_{A}(\P)$ if $\P$ is a separating and splitting silting complex.
\end{abstract}

\maketitle

\section{Introduction}
The  concept of representation dimension was first introduced by Auslander \cite{Auslander} in 1971. It is an important homological invariant in the  representation theory. He proved that an artin algebra $A$ is representation-finite if and only if its  representation dimension at most two. It means that the representation dimension gives a reasonable way of  measuring how far an artin algebra is from being of finite representation  type. In 1998, Reiten asked whether any artin algebra has a finite  representation dimension. After this, Iyama \cite{Iyama} gave a positive answer to this question. Moreover, representation dimension of algebras is closely relative to other homological conjectures, such as the finitistic dimension conjecture. In particular, Igusa and Todorov \cite{Igusa} proved that if rep.dim$A\leq3$, then $A$ has finite finitistic  dimension. However, it was unsure that whether  the representation dimension of an artin algebra can be greater than three, until in 2005 Rouquier \cite{Rouquier}  showed that representation dimension of an artin algebra may be arbitrarily large and constructed examples of algebras with  representation dimension larger than or equal to four.\par
Up to now,  the representation dimensions of several important classes of algebras are known to be at most three, such as hereditary algebras \cite{Auslander}, torsionless-finite algebras \cite{Ringle01}, glued algebras \cite{Coelho}, tilted algebras \cite{Assem01}, quasi-tilted algebras \cite{Oppermann}, iterated tilted algebras \cite{Happel}, special biserial algebras \cite{Erdmann}, cluster-concealed algebras \cite{Chaio}, and so on. In general, for a given artin algebra, it is difficult to know and compute the actual value of its  representation dimension. However, there is a wise strategy to calculate the representation dimension by comparing two closely related algebras. Then by the representation dimension of a known algebra,  one can measure that of unknown algebra. Along this philosophy, it is nature to consider  comparing the   representation dimensions between algebra  $A$ and $\text{End}_{A}(T)$, where $T$ is a classical tilting right $A$-module. In this case, $T$ induces two torsion pairs ($\T(T)$, $\F(T)$), ($\X(T)$, $\Y(T)$) in mod$A$ and mod$\text{End}_{A}(T)$, respectively. These torsion pairs split the module categories into some different pieces. It turn out to be effective to compute the representation dimension by using torsion pairs, see \cite{Happel} and \cite{ChenHu}.\par
As a generalization of the classical tilting theory, the concept of silting complexes originated from Keller and Vossieck. In particular, Hoshino \cite{Hoshino} showed that  2-term silting complexes can induce torsion pairs in module categories.  More recently, Buan and Zhou \cite{Buan} gave a generalization of the classical tilting theorem, called silting theorem. They described the relations of torison pairs between mod$A$ and mod$B$ by using the natural equivalences induced by Hom and Ext functors, where $B=\text{End}_{D^{b}(A)}(\P)$ and $\P$ is a 2-term silting complex in $K^{b}(\text{proj}A)$. It provides us with a basic framework to research the representation dimension by the silting theory. For more important homological results on the 2-term silting complexes,  we refer the reader to \cite{Buan02} and \cite{Buan03}.\par
In this paper, we consider the representation invariants induced by 2-term silting complexes. In details, we focus on when the representation dimensions of $A$ and $\text{End}_{D^{b}(A)}(\P)$ are coincide. Now, we present one of our main results as follows.\par
\begin{theorem}\label{th1} Let $A$ be a finite dimensional algebra and  let $\P$ be a 2-term separating silting  complex in {\rm $K^{b}(\text{proj}A)$} such that {\rm Id$_{A}X\leq1$} for each $X\in\F(\P)$ and {\rm$B=\text{End}_{D^{b}(A)}(\P)$}. Then {\rm rep.dim$B=$rep.dim$A$}.
\end{theorem}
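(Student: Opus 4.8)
The plan is to pass to the module-level picture supplied by the silting theorem, transport a minimal generator-cogenerator of $\mathrm{mod}\,A$ to $\mathrm{mod}\,B$, and use the homological hypothesis to keep the endomorphism algebra's global dimension from growing. First I would record the data of Hoshino and of Buan--Zhou: $\P$ induces a torsion pair $(\T(\P),\F(\P))$ in $\mathrm{mod}\,A$ and a torsion pair $(\X(\P),\Y(\P))$ in $\mathrm{mod}\,B$, and the cohomological functors $\Hom_{D^{b}(A)}(\P,-)$ and $\Hom_{D^{b}(A)}(\P,-[1])$ restrict to equivalences $\T(\P)\xrightarrow{\ \sim\ }\Y(\P)$ and $\F(\P)\xrightarrow{\ \sim\ }\X(\P)$. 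Since $\P$ is 2-term, any short exact sequence of $A$-modules produces under these two functors a six-term exact sequence of $B$-modules, and dually on the $B$-side. I would also unwind what ``separating'' buys: it makes the relevant torsion pair splitting, so that each indecomposable on that side lies in $\T(\P)$ or $\F(\P)$ (resp. $\X(\P)$ or $\Y(\P)$), allowing the torsion and torsion-free parts of a module to be handled separately.

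Fix a generator-cogenerator $M$ of $\mathrm{mod}\,A$ with $A\oplus DA\in\add M$ and $\mathrm{gl.dim}\,\End_{A}(M)=\mathrm{rep.dim}\,A$, and put
\[
N=\Hom_{D^{b}(A)}(\P,M)\ \oplus\ \Hom_{D^{b}(A)}(\P,M[1])\ \oplus\ B\ \oplus\ DB,
\]
enlarged if necessary by the finitely many connecting summands forced by the gluing of $(\X(\P),\Y(\P))$ -- the images of $A$, of $DA$, and of the stalk complexes attached to $\P$. One checks first that $N$ is a generator-cogenerator of $\mathrm{mod}\,B$. The main point is $\mathrm{gl.dim}\,\End_{B}(N)\le\mathrm{gl.dim}\,\End_{A}(M)$: by Auslander's resolution criterion it suffices that every $Y\in\mathrm{mod}\,B$ admit an $\add N$-approximation resolution no longer than the shortest $\add M$-approximation resolution of a corresponding $A$-module. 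I would split $Y$ along $(\X(\P),\Y(\P))$, transport its two halves back to $\T(\P)$ and $\F(\P)$, resolve them there by $\add M$, push the resolutions forward through the six-term sequences, and splice. Naively the length roughly doubles; the theorem asserts it does not, and this is exactly where the hypotheses act: ``separating'' forces the $\T$- and $\F$-contributions into non-overlapping homological degrees, and $\mathrm{Id}_{A}X\le1$ for $X\in\F(\P)$ annihilates the $\Ext^{\ge1}$-obstructions that would otherwise splice an $\F$-part onto the resolution and lengthen it. This yields $\mathrm{rep.dim}\,B\le\mathrm{rep.dim}\,A$.

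For the reverse inequality I would exploit the symmetry in the silting theorem between $\mathrm{mod}\,A$ and $\mathrm{mod}\,B$: the two torsion pairs and the two functors exchange roles, so the construction above runs essentially verbatim from $\mathrm{mod}\,B$ to $\mathrm{mod}\,A$ once the dual hypotheses are verified, and in that direction the bound $\mathrm{Id}_{A}X\le1$ on $\F(\P)$ plays exactly the part that ``separating'' played the first time. The step I expect to be the genuine obstacle is the non-lengthening claim in the middle paragraph: proving that only finitely many connecting summands are required, and that the $\Hom_{B}$ and $\Ext_{B}$ groups running between the transported torsion and torsion-free pieces -- which point the ``wrong'' way for $(\X(\P),\Y(\P))$ and hence need not vanish -- do not enlarge $\add N$-resolutions. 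Carrying out that bookkeeping, with $\mathrm{Id}_{A}X\le1$ doing the work at precisely the points where the naive estimate loses control, is the technical heart of the argument.
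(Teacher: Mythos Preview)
Your outline is essentially the paper's proof, and the ``connecting summand'' you anticipate is just $\E(\mathrm{H}^{-1}(\nu\P))$, which together with $\H(\mathrm{H}^{0}(\nu\P))$---already in $\add\H(M)$ since $\mathrm{H}^{0}(\nu\P)$ is injective under the $\mathrm{Id}\le 1$ hypothesis---assembles $DB$. What you are missing at the point you flag as the technical heart is a single intermediate result that does all the non-lengthening work: given $\mathrm{Id}_{A}X\le 1$ on $\F(\P)$, the condition that $\P$ be separating is \emph{equivalent} to $\mathrm{Pd}_{B}M\le 1$ for every $M\in\X(\P)$. With this in hand, for $U\in\T(\P)$ one applies $\H$ to each step $0\to K_{i+1}\to M_{i}\to K_{i}\to 0$ of an $\add M$-resolution; the cokernel $\Omega_{i}$ of the resulting map sits in a four-term exact sequence whose other three terms lie in $\X(\P)$, so $\mathrm{Pd}_{B}\Omega_{i}\le 1$, and a two-term projective resolution of $\Omega_{i}$ splices in to yield an $\add N$-approximation of $\H(K_{i})$ of the same length. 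For $V\in\F(\P)$ the $\add M$-resolution stays entirely in $\F(\P)$ (closed under predecessors), so $\E$ is exact on it; the remaining check is the approximation property against the extra summand $\E(\mathrm{H}^{-1}(\nu\P))$, and this reduces to $\Hom_{B}(\E(\tau\mathrm{H}^{0}(\P)),\E(X))=0$ for indecomposable $X\in\F(\P)\setminus\add\mathrm{H}^{-1}(\nu\P)$, proved by an induction along almost split sequences.

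Your account of the reverse inequality needs a small correction: it is not that the two hypotheses swap roles. Rather, one passes to the induced 2-term silting complex $\Q$ over $B$ and checks the \emph{dual} hypotheses for it. Since the hypotheses on $\P$ make $\P$ splitting, $\Q$ is separating; and the bound $\mathrm{Pd}_{B}\le 1$ on $\T(\Q)=\X(\P)$ just established is precisely the dual of the $\mathrm{Id}_{A}\le 1$ hypothesis. The dual construction (coresolutions in place of resolutions, $DA$ in place of $B$, and the dual connecting summand) then gives $\mathrm{rep.dim}\,A\le\mathrm{rep.dim}\,B$.
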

Applying this result into the classical tilting theory, we can obtain \cite[Theorem 3.1]{ChenHu}. Meanwhile, we give an example to  illustrate that it is a proper generalization, see Example \ref{ex4.3}.\par
On other hand, it is well-known that if $\P$ is a 2-term silting complex in {\rm $K^{b}(\text{proj}A)$}, then  $\text{H}^{0}(\P)$ is a tilting $A/\text{ann}_{A}(\P)$-module, where $\text{ann}_{A}(\P)$ is  the annihilator of $\text{H}^{0}(\P)$. In general, we know that $\text{End}_{A}(\text{H}^{0}(\P))$ is a factor algebra of $\text{End}_{A}(\P)$. It is interesting to consider to describe the relationship of the representation dimensions between $\text{End}_{A}(\text{H}^{0}(\P))$ and $A$. Thus, we have the following results.
\begin{proposition} Let $A$ be a finite dimensional algebra and $\P$ be a 2-term  splitting and separating silting  complex in {\rm $K^{b}(\text{proj}A)$}.  Then {\rm rep.dim$\End_{A}(\text{H}^{0}(\P))=$ rep.dim$A/\text{ann}_{A}(\P)$}.
\end{proposition}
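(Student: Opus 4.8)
The plan is to deduce the statement from Theorem~\ref{th1}, applied over the quotient algebra $\bar{A}:=A/\text{ann}_{A}(\P)$ to the tilting module $T:=\text{H}^{0}(\P)$. Since $T$ is a classical tilting $\bar{A}$-module, we have $\text{pd}_{\bar{A}}T\le 1$, so a minimal projective presentation of $T$ over $\bar{A}$ yields a $2$-term complex $\P'\in K^{b}(\text{proj}\,\bar{A})$ with $\text{H}^{0}(\P')=T$, $\Ext^{1}_{\bar{A}}(T,T)=0$, and the correct number of non-isomorphic indecomposable summands; hence $\P'$ is a $2$-term silting complex in $K^{b}(\text{proj}\,\bar{A})$ and $\End_{D^{b}(\bar{A})}(\P')=\End_{\bar{A}}(T)$. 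Because $T$ is annihilated by $\text{ann}_{A}(\P)$, its $A$-module structure factors through $\bar{A}$ and every $A$-linear endomorphism of $T$ is already $\bar{A}$-linear, so $\End_{A}(\text{H}^{0}(\P))=\End_{\bar{A}}(T)=\End_{D^{b}(\bar{A})}(\P')$; and, being tilting, $T$ is faithful over $\bar{A}$, so $\bar{A}/\text{ann}_{\bar{A}}(\P')=\bar{A}$. It therefore suffices to prove $\text{rep.dim}\,\End_{D^{b}(\bar{A})}(\P')=\text{rep.dim}\,\bar{A}$, which is exactly the conclusion of Theorem~\ref{th1} for the pair $(\bar{A},\P')$, once its hypotheses are verified.

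Next I would transfer the separating hypothesis. The torsion pair $(\T(\P'),\F(\P'))$ in $\text{mod}\,\bar{A}$ is the restriction of $(\T(\P),\F(\P))$: indeed $\T(\P)=\text{Gen}_{A}(\text{H}^{0}(\P))$ already consists of $\bar{A}$-modules, so $\T(\P')=\T(\P)$ and $\F(\P')=\F(\P)\cap\text{mod}\,\bar{A}=\{X\in\text{mod}\,\bar A:\Hom_{\bar A}(T,X)=0\}$; similarly the pair $(\X(\P'),\Y(\P'))$ in $\text{mod}\,\End_{\bar{A}}(T)$ is the restriction of $(\X(\P),\Y(\P))$ along the surjection $B\twoheadrightarrow\End_{A}(\text{H}^{0}(\P))$ (this uses that the torsion and torsion-free classes in $\text{mod}\,B$ meet $\text{mod}\,\End_{\bar A}(T)$ in a torsion pair, and that this restricted pair is the one induced by $\P'$; both points are routine but need to be checked). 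Since every indecomposable $\bar{A}$-module, respectively $\End_{\bar{A}}(T)$-module, is in particular an indecomposable $A$-module, respectively $B$-module, the splitting of the relevant torsion pair for the separating complex $\P$ forces the splitting of its restriction, so $\P'$ is separating over $\bar{A}$.

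The remaining — and, I expect, hardest — point is to produce the homological restriction $\text{Id}_{\bar{A}}X\le 1$ for every $X\in\F(\P')$, and this is where the splitting hypothesis on $\P$ is used: under the identifications above $T$ becomes a \emph{splitting} tilting $\bar{A}$-module of projective dimension $\le 1$, and one must show that the torsion-free class of such a module consists of $\bar{A}$-modules of injective dimension at most $1$. I would argue this by running the tilting theorem for $\P'$ in reverse: the equivalence $\Ext^{1}_{\bar{A}}(T,-)\colon\F(\P')\xrightarrow{\ \sim\ }\X(\P')$ carries $X$ to an object of the torsion class $\X(\P')\subseteq\text{mod}\,\End_{\bar{A}}(T)$, and the splitting of $(\X(\P'),\Y(\P'))$ together with the cotilting $\End_{\bar{A}}(T)$-module attached to $T$ (which has injective dimension $\le 1$) bounds a minimal injective copresentation of $X$ over $\bar{A}$ and rules out length $\ge 2$. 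Granting this, the hypotheses of Theorem~\ref{th1} hold for $(\bar{A},\P')$, and it yields $\text{rep.dim}\,\End_{\bar{A}}(\text{H}^{0}(\P))=\text{rep.dim}\,\bar{A}=\text{rep.dim}\,A/\text{ann}_{A}(\P)$, which is the assertion.
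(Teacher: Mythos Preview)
Your plan is essentially the same as the paper's: reduce to the quotient $\bar A=A/\mathfrak A$, show that $T=\text{H}^{0}(\P)$ is a separating and splitting tilting $\bar A$-module, and then invoke the comparison theorem (the paper cites Corollary~\ref{cor1}, which is exactly Theorem~\ref{th1} specialised to a tilting module). Your verification that $T$ is separating over $\bar A$ matches the paper's almost verbatim; the identification $\T(\P')=\T(\P)$ and the fact that indecomposable $\bar A$-modules are indecomposable $A$-modules is precisely what the paper uses.

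The one point where the arguments diverge is the injective-dimension bound $\text{Id}_{\bar A}X\le 1$ for $X\in\F(\P')$. The paper does this directly and elementarily: it shows $\Ext^{2}_{\bar A}(Z,X)=0$ for every indecomposable $\bar A$-module $Z$ by using the natural embedding $\Ext^{i}_{\bar A}\hookrightarrow\Ext^{i}_{A}$ together with the case split $Z\in\T(\P)$ (then $\Ext^{2}_{A}(Z,X)=0$ since $\P$ is splitting) or $Z\in\F(\P)$ (then $\Ext^{1}_{A}(Z,\Omega^{-1}_{\bar A}X)=0$ since $\P$ is separating, and dimension-shift). Your route instead first transfers the splitting of $(\X(\P),\Y(\P))$ down to $(\X(\P'),\Y(\P'))$ via the surjection $B\twoheadrightarrow\End_{\bar A}(T)$, and then invokes the classical equivalence ``$T$ splitting $\Leftrightarrow$ $\text{Id}_{\bar A}\F(T)\le 1$''. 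This is correct---the restriction works because $\Y(\P)\subseteq\text{mod}\,\End_{\bar A}(T)$, forcing $\Y(\P')=\Y(\P)$ and hence $\X(\P')=\X(\P)\cap\text{mod}\,\End_{\bar A}(T)$, and an indecomposable $\End_{\bar A}(T)$-module is an indecomposable $B$-module---but your final sentence (``bounds a minimal injective copresentation \ldots'') is too vague to stand alone; it should simply be replaced by a citation to the standard tilting fact. The paper's $\Ext$-embedding argument is shorter and entirely self-contained, whereas yours trades that computation for a torsion-pair restriction plus a black-box classical result.
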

The paper is organized as follows. In Section 2, we recall some well-known results on the silting theory and the representation dimension. In Section 3, we prove our main results. In Section 4, we provide some examples to illustrate that  anyone of the conditions of Theorem \ref{th1} cannot be removed. In Section 5, we compare the representation dimensions of $\End_{A}(\text{H}^{0}(\P))$ and $A/\text{ann}_{A}(\P)$.

\section{Preliminaries}
Let $A$ be a finite dimensional $k$-algebra with $k$ is a field. We denote by mod$A$ the category of finitely generated right $A$-modules. Let $D(-)=\Hom_{k}(-,k)$ be the $k$-duality. We denote by proj$A$ the full subcategory of mod$A$ generated by the projective modules. Let $D^{b}(A)$ be the bounded derived category, with shift functor $\Sigma$ and $K^{b}(\text{proj}A)$ the bounded homotopy category of finitely generated projective  right $A$-modules.\par
A complex $\textbf{P}$ is said to be 2-term if $P^{i}=0$ for $i\neq0, 1$. Recall that a 2-term complex $\P$ in $K^{b}(\text{proj}A)$ is said to be silting if it satisfies the following two conditions
\begin{enumerate}
  \item  $\Hom_{K^{b}(\text{proj}A)}(\P,\Sigma\P)$=0;
  \item  thick$\P$=$K^{b}(\text{proj}A)$ where thick$\P$ is the smallest triangulated subcategory closed under direct summands containing $\P$.
\end{enumerate}
In addition, if $\P$ satisfies  $\Hom_{K^{b}(\text{proj}A)}(\P,\Sigma^{-1}\P)$=0, then $\P$ is said to be tilting.\par
Let $\P$ be a 2-term silting complex in {\rm $K^{b}(\text{proj}A)$}, and consider the following two full subcategories of mod$A$
{\rm\begin{align*}
  \T(\P) & =\{~X\in \text{mod}A~|~\Hom_{D^{b}(A)}(\P,\Sigma X)=0\} \\
  \F(\P) & =\{~Y\in \text{mod}A~|~\Hom_{D^{b}(A)}(\P,Y)=0\}.
\end{align*}}
\begin{theorem}{\rm \cite{Buan}}\label{th1-1} Let $\P$ be a 2-term silting complex in {\rm $K^{b}(\text{proj}A)$}, and let {\rm$B=\text{End}_{D^{b}(A)}(\P)$}.
\begin{enumerate}
\item $\C(\P)$ is an abelian category and the short exact sequences in $\C(\P)$ are precisely the triangles in $D^{b}(A)$ all of whose vertices are objects in $\C(\P)$.

  \item The pair {\rm($\T(\P)$, $\F(\P)$)} is a torsion pair in {\rm mod$A$}. The pair {\rm($\Sigma\F(\P)$, $\T(\P)$)} is a torsion pair in  $\C(\P)$.
  \item {\rm$\Hom_{D^{b}(A)}(\P,-):\C(\P)\rightarrow \text{mod}B$ } is an equivalence of abelian categories.
  \item There is a triangle
  \begin{equation*}
    A\rightarrow \P'\xrightarrow{f}\P''\rightarrow \Sigma A
  \end{equation*}
  with $\P'$, $\P''$ in {\rm add$\P$}.\\
  Consider the 2-term complex $\Q$ in {\rm $K^{b}(\text{proj}B)$} induced by the map
   {\rm\begin{equation*}
    \Hom_{D^{b}(A)}(\P, f): \Hom_{D^{b}(A)}(\P, \P')\xrightarrow{}\Hom_{D^{b}(A)}(\P, \P'').
  \end{equation*}}
  \item $\Q$ is a 2-term silting complex in {\rm $K^{b}(\text{proj}B)$} such that {\rm\begin{align*}
                        \T(\Q) &=\X(\P)=\Hom_{D^{b}(A)}(\P,\Sigma\F(\P)) \\
                        \F(\Q) & =\Y(\P)=\Hom_{D^{b}(A)}(\P,\T(\P)).
                      \end{align*}}
  \item There is an algebra epimorphism {\rm$\Phi_{\P}:A\rightarrow \overline{A}=\text{End}_{D^{b}(B)}(\Q)$}.
  \item $\Phi_{\P}$ is an isomorphism if and only if $\P$ is tilting.
  \item Let {\rm$\Phi_{\ast}: \text{mod}\overline{A}\rightarrow \text{mod}A$} be the inclusion functor. Then one obtains the following picture,
which also shows the quasi-inverse equivalences between the pair {\rm($\T(\P)$,~$\F(\P)$)} and {\rm($\T(\Q)$,~$\F(\Q)$)}.
\begin{figure}[h]
  \centering
  \includegraphics[height=4.8cm,width=8cm]{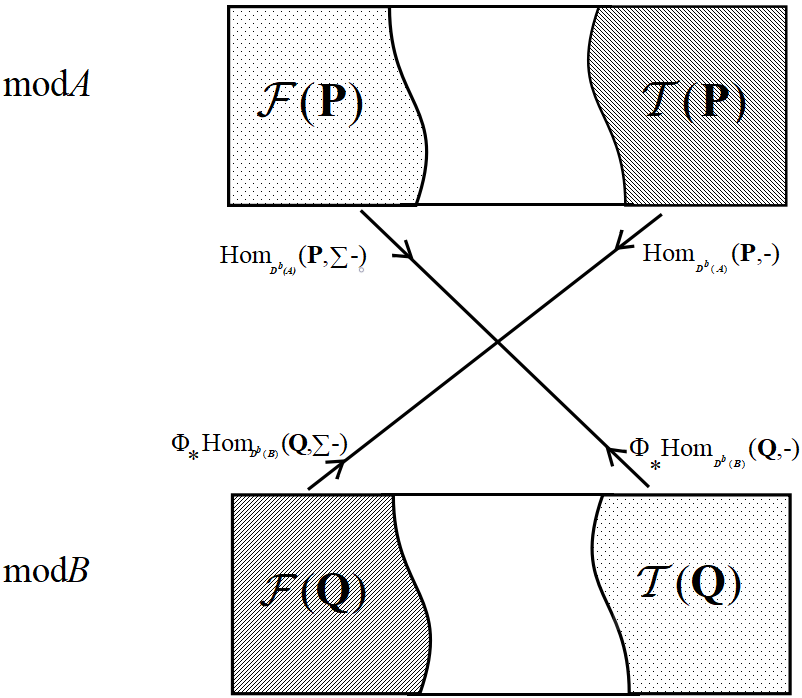}\\
\end{figure}

\end{enumerate}
\end{theorem}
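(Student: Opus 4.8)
This is the silting theorem of Buan and Zhou, and I would organize its proof around the idea that $\C(\P)$ is the heart of a Happel--Reiten--Smal\o\ tilt of the standard $t$-structure on $D^{b}(A)$, so that the whole statement becomes a silting-theoretic form of the Brenner--Butler tilting theorem. First I would establish the torsion pair $(\T(\P),\F(\P))$ in $\text{mod}A$. Because $\P$ lives in two consecutive degrees, $\Hom_{D^{b}(A)}(\P,\Sigma^{i}X)$ vanishes automatically for all but two adjacent values of $i$ and every $A$-module $X$; the silting hypotheses then guarantee, for each $M\in\text{mod}A$ viewed as a stalk complex, a canonical short exact sequence $0\to M'\to M\to M''\to 0$ with $M'\in\T(\P)$ and $M''\in\F(\P)$ (this is the point at which $\text{thick}\,\P=K^{b}(\text{proj}A)$, equivalently the triangle of part (4), is essential), while $\Hom_{A}(\T(\P),\F(\P))=0$ is immediate from the defining vanishings. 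The resulting pair is the first torsion pair in (2), and it coincides with the torsion class generated by $\text{H}^{0}(\P)$.

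Next I would take $\C(\P)$ to be the heart of the tilt of the standard $t$-structure along $(\T(\P),\F(\P))$. As the heart of a bounded $t$-structure it is abelian and its short exact sequences are exactly the triangles of $D^{b}(A)$ with all three vertices in $\C(\P)$, which is (1); the canonical triangle expressing an object of $\C(\P)$ as an extension in $\C(\P)$ of an object of $\T(\P)$ by an object of $\Sigma\F(\P)$ exhibits $(\Sigma\F(\P),\T(\P))$ as a torsion pair in $\C(\P)$, since $\Hom_{\C(\P)}(\Sigma\F(\P),\T(\P))=\Hom_{D^{b}(A)}(\F(\P),\Sigma^{-1}\T(\P))=0$ (negative Ext between modules vanishes), completing (2). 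I would then check that $\P\in\C(\P)$; that $\P$ is a \emph{projective} object of $\C(\P)$, because any epimorphism $M\to N$ there fits into a triangle $K\to M\to N\to\Sigma K$ with $K\in\C(\P)$ and $\Hom_{D^{b}(A)}(\P,\Sigma K)=0$ --- the vanishing by d\'evissage along the canonical triangle of $K$, using only the definition of $\T(\P)$ and two-term-ness --- so that $\Hom_{D^{b}(A)}(\P,-)$ is exact on $\C(\P)$; and that $\P$ is a \emph{generator}, because the same d\'evissage together with $\T(\P)\cap\F(\P)=0$ shows that any $C\in\C(\P)$ with $\Hom_{D^{b}(A)}(\P,C)=0$ is zero. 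Since $\C(\P)$ is a length category with projective generator $\P$ and $\End_{\C(\P)}(\P)=B$, a Morita-type argument --- every object has a presentation $\P^{m}\to\P^{n}\to X\to 0$ which $\Hom_{D^{b}(A)}(\P,-)$ carries to $B^{m}\to B^{n}\to\Hom_{D^{b}(A)}(\P,X)\to 0$, and every such $B$-presentation lifts --- yields the equivalence $\Hom_{D^{b}(A)}(\P,-)\colon\C(\P)\xrightarrow{\sim}\text{mod}\,B$ of (3).

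For (4) and (5) I would invoke the standard fact (Aihara--Iyama, Keller--Vossieck) that, given $\Hom_{D^{b}(A)}(\P,\Sigma\P)=0$, a triangle $A\to\P'\xrightarrow{f}\P''\to\Sigma A$ with $\P',\P''\in\text{add}\,\P$ exists precisely when $\text{thick}\,\P=K^{b}(\text{proj}A)$, and then obtain $\Q$ by applying $\Hom_{D^{b}(A)}(\P,-)$ degreewise to $f$. Feeding the same functorial computation the displayed triangle forces the silting axioms for $\Q$, and the identities $\T(\Q)=\X(\P)=\Hom_{D^{b}(A)}(\P,\Sigma\F(\P))$, $\F(\Q)=\Y(\P)=\Hom_{D^{b}(A)}(\P,\T(\P))$ come from transporting the torsion pair $(\Sigma\F(\P),\T(\P))$ of $\C(\P)$ across the equivalence of (3) into $\text{mod}\,B$ and matching the result, through the bijection between $2$-term silting complexes and functorially finite torsion pairs over $B$, with the torsion pair canonically attached to $\Q$.

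Finally, for (6)--(8), I would run the construction once more starting from $(B,\Q)$: this produces $\overline{A}=\text{End}_{D^{b}(B)}(\Q)$ with its torsion pair, and the compatibility of the two passages gives the canonical algebra map $\Phi_{\P}\colon A\to\overline{A}$, a ring epimorphism exactly because $\P$ generates $\C(\P)$, so that restriction along $\Phi_{\P}$ is the fully faithful inclusion $\Phi_{*}$ of (8). For (7) one uses that tilting is silting together with the extra vanishing $\Hom_{D^{b}(A)}(\P,\Sigma^{-1}\P)=0$, which is precisely what promotes $\Hom_{D^{b}(A)}(\P,-)$ to a derived equivalence $D^{b}(A)\simeq D^{b}(B)$ reconstructing $A$ from $(B,\Q)$, i.e. what makes $\Phi_{\P}$ invertible; and (8) is then the assembly of the mutually quasi-inverse restrictions of $\Hom_{D^{b}(A)}(\P,-)$ and its inverse to the torsion and torsion-free classes on both sides, together with $\Phi_{*}$. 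The real work, I expect, is (3): one must know in advance that $\C(\P)$ is well enough behaved --- a length category with enough projectives --- for a projective generator to pin it down up to Morita equivalence; the second delicate point is the identification in (5) of the transported torsion pair with the one canonically carried by $\Q$.
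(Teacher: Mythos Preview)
The paper does not give a proof of this theorem: it is stated in the preliminaries with a citation to Buan and Zhou and is used as a black box throughout. So there is nothing in the paper to compare your proposal against.

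That said, your outline is a reasonable and essentially correct sketch of how the Buan--Zhou silting theorem is actually proved in the cited reference: the heart $\C(\P)$ arises as the Happel--Reiten--Smal\o\ tilt along the torsion pair $(\T(\P),\F(\P))$, $\P$ is a projective generator of this heart giving the equivalence with $\text{mod}\,B$, and the dual complex $\Q$ is constructed from the Bongartz-type triangle in (4). One small correction: in your paragraph on (6), the map $\Phi_{\P}$ being a ring epimorphism is not quite because ``$\P$ generates $\C(\P)$''; rather, Buan and Zhou obtain it by identifying $\overline{A}$ with the quotient of $A$ by the ideal of maps factoring through objects of $\text{add}\,\Sigma\P$, so that $\Phi_{\P}$ is literally a surjection of algebras. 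Your characterization of (7) via derived equivalence is correct in spirit.
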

In what following, the symbol $\Q$ always denotes the induced complex $\Q$. It is a  $2$-term silting complex in $K^{b}(\text{proj}B)$ such that the induced pair $(\T(\Q),\F(\Q))=(\X(\P),\Y(\P))$.
\begin{definition} {\rm \cite{Buan}} Let $\P$ is a 2-term silting complex in $K^{b}(\text{proj}A)$.
\begin{enumerate}
  \item $\P$ is called splitting if the induced torsion pair ($\X(\P)$, $\Y(\P)$) in mod$B$ is split.
  \item $\P$ is called separating if the induced torsion pair ($\T(\P)$, $\F(\P)$) in mod$A$ is split.
\end{enumerate}
 \end{definition}
 \begin{remark} \cite{Buan} Let $\P$ be a 2-term silting complex in $K^{b}(\text{proj}B)$.
 \begin{enumerate}
                  \item $\P$ is splitting if and only if $\Ext^{2}_{A}(\T(\P),\F(\P))=0$.
                  \item If $\P$ is separating, then $\P$ is a tilting complex.
                  \item Suppose that $\P$ is both splitting and separating. In this case,  $\Q$ is a separating silting complex and so, it is a tilting complex. Then,  $A\cong \text{End}_{D^{b}(B)}(\Q)$. In this case,  $(\X(\Q),\Y(\Q))=(\T(\P),\F(\P))$. Then $\Q$ is also a splitting  silting complex.
                \end{enumerate}

 \end{remark}
 \begin{lemma}{\rm \cite{Buan}} \label{lemma2.14}Let $\P$ be a 2-term silting  complex in {\rm $K^{b}(\text{proj}A)$} and {\rm($\T(\P)$, $\F(\P)$)} be the induced torsion pair in {\rm mod$A$}. Then the following hold.
\begin{enumerate}
  \item For any $X\in$ {\rm mod$A$}, $X\in$ {\rm add$H^{0}(\P)$} if and only if $X$ is {\rm Ext}-projective in $\T(\P)$.
  \item For any $X\in$ {\rm mod$A$}, $X\in$ {\rm add$H^{-1}(\nu\P)$} if and only if $X$ is {\rm Ext}-injective in $\F(\P)$.
\end{enumerate}
\end{lemma}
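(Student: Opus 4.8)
The plan is to prove statement (1) directly from the canonical truncation triangle of $\P$ together with the silting hypothesis, and then to deduce statement (2) from (1) by applying the $k$-duality $D=\Hom_{k}(-,k)$. Two elementary vanishings will be used repeatedly: since $\P$ is a complex of projectives concentrated in cohomological degrees $-1$ and $0$, $\Hom_{D^{b}(A)}(\P,\Sigma^{i}M)=0$ for every $A$-module $M$ and every $i\notin\{0,1\}$; and $\Hom_{D^{b}(A)}(N,\Sigma^{i}M)=\Ext^{i}_{A}(N,M)$ for $A$-modules $N,M$, which vanishes for $i<0$. The crucial structural input is the truncation triangle
\begin{equation*}
\Sigma H^{-1}(\P)\longrightarrow\P\longrightarrow H^{0}(\P)\longrightarrow\Sigma^{2}H^{-1}(\P)
\end{equation*}
in $D^{b}(A)$.

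For the ``only if'' part of (1) it suffices to treat $X=H^{0}(\P)$. Applying $\Hom_{D^{b}(A)}(\P,-)$ to a rotation of the truncation triangle and using $\Hom_{D^{b}(A)}(\P,\Sigma\P)=0$ (the silting condition) together with the degree vanishings gives $\Hom_{D^{b}(A)}(\P,\Sigma H^{0}(\P))=0$, i.e.\ $H^{0}(\P)\in\T(\P)$. Next, for $Y\in\T(\P)$, applying $\Hom_{D^{b}(A)}(-,\Sigma Y)$ to the truncation triangle exhibits $\Ext^{1}_{A}(H^{0}(\P),Y)=\Hom_{D^{b}(A)}(H^{0}(\P),\Sigma Y)$ as squeezed between $\Hom_{D^{b}(A)}(\Sigma^{2}H^{-1}(\P),\Sigma Y)\cong\Ext^{-1}_{A}(H^{-1}(\P),Y)=0$ and $\Hom_{D^{b}(A)}(\P,\Sigma Y)=0$ (the latter because $Y\in\T(\P)$); hence $\Ext^{1}_{A}(H^{0}(\P),Y)=0$. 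Thus $H^{0}(\P)$, and therefore all of $\add H^{0}(\P)$, is Ext-projective in $\T(\P)$.

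For the ``if'' part of (1), let $X\in\T(\P)$ be Ext-projective in $\T(\P)$. By Theorem~\ref{th1-1}(2) we have $X\in\C(\P)$, and the equivalence $F:=\Hom_{D^{b}(A)}(\P,-)\colon\C(\P)\xrightarrow{\ \sim\ }\text{mod}B$ of Theorem~\ref{th1-1}(3) sends $\add\P$ onto $\text{proj}B$ (since $F(\P)=B$). Pick an epimorphism $\P_{0}\twoheadrightarrow X$ in $\C(\P)$ with $\P_{0}\in\add\P$, obtained by pulling back a projective cover of $F(X)$, and let $K\in\C(\P)$ be its kernel; by Theorem~\ref{th1-1}(1) this short exact sequence is a triangle $K\to\P_{0}\to X\to\Sigma K$ in $D^{b}(A)$. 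Decomposing an arbitrary object of $\C(\P)$ along the torsion pair $(\Sigma\F(\P),\T(\P))$ of Theorem~\ref{th1-1}(2) shows that every object of $\C(\P)$ has cohomology concentrated in degrees $-1$ and $0$, the degree-$0$ part lying in $\T(\P)$; applying this to $K$ and taking the cohomology long exact sequence of the triangle (using $H^{-1}(X)=0$) yields a short exact sequence of $A$-modules
\begin{equation*}
0\longrightarrow H^{0}(K)\longrightarrow H^{0}(\P_{0})\longrightarrow X\longrightarrow 0
\end{equation*}
with $H^{0}(\P_{0})\in\add H^{0}(\P)$ and $H^{0}(K)\in\T(\P)$. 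Since $X$ is Ext-projective in $\T(\P)$, this sequence splits, so $X\in\add H^{0}(\P_{0})\subseteq\add H^{0}(\P)$, which proves (1).

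Finally, (2) follows from (1) by duality. The complex $\widetilde{\P}:=\Sigma\Hom_{A}(\P,A)$ is a $2$-term silting complex in $K^{b}(\text{proj}A^{op})$ — the contravariant duality $\Hom_{A}(-,A)$ between $K^{b}(\text{proj}A)$ and $K^{b}(\text{proj}A^{op})$ carries $2$-term silting complexes to $2$-term silting complexes up to shift — and since $\nu\P=D\Hom_{A}(\P,A)$ one has $H^{-1}(\nu\P)=D\,H^{0}(\widetilde{\P})$. Using the Auslander--Reiten formula $\Hom_{D^{b}(A)}(Y,\nu C)\cong D\Hom_{D^{b}(A)}(C,Y)$ for perfect $C$, a short computation gives $DY\in\T(\widetilde{\P})$ if and only if $Y\in\F(\P)$, i.e.\ $D\F(\P)=\T(\widetilde{\P})$. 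As $D$ is a duality, $X$ is Ext-injective in $\F(\P)$ if and only if $DX$ is Ext-projective in $\T(\widetilde{\P})$, which by part (1) applied to $A^{op}$ holds exactly when $DX\in\add H^{0}(\widetilde{\P})=\add D\,H^{-1}(\nu\P)$, i.e.\ when $X\in\add H^{-1}(\nu\P)$. The one genuinely delicate point is the ``if'' part of (1): one is tempted to argue entirely inside $\text{mod}A$, but that only exhibits $H^{0}(K)$ as a \emph{submodule} of a module in $\T(\P)$, and torsion classes need not be closed under submodules; it is essential to pass through the heart $\C(\P)$, whose own torsion pair forces $H^{0}(K)$ itself into $\T(\P)$. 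A secondary point is keeping track of which $\Hom$-vanishings in $D^{b}(A)$ are automatic for a $2$-term complex and which genuinely use the silting condition.
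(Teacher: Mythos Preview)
The paper does not give its own proof of this lemma: it is stated with a citation to \cite{Buan} and used as a black box. So there is no ``paper proof'' to compare against, and your write-up should be assessed on its own terms.

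Your argument is correct. The ``only if'' direction of (1) is the routine part and your use of the truncation triangle together with the silting vanishing is exactly the standard computation. The ``if'' direction is the substantive step, and you handle it properly: the key point---which you explicitly flag---is that after forming the triangle $K\to\P_{0}\to X\to\Sigma K$ in $\C(\P)$ one needs $H^{0}(K)\in\T(\P)$, and this follows from the HRS torsion pair $(\Sigma\F(\P),\T(\P))$ in the heart rather than from any closure property of $\T(\P)$ in $\text{mod}A$. Your deduction of (2) from (1) via $\widetilde{\P}=\Sigma\Hom_{A}(\P,A)$ is also sound; the identifications $H^{-1}(\nu\P)\cong D\,H^{0}(\widetilde{\P})$ and $D\F(\P)=\T(\widetilde{\P})$ are correct, the latter boiling down to the Serre duality $\Hom_{D^{b}(A)}(Y,\nu\P)\cong D\Hom_{D^{b}(A)}(\P,Y)$ combined with the adjunction $\Hom_{D^{b}(A^{op})}(\P^{\ast},DY)\cong\Hom_{D^{b}(A)}(Y,D\P^{\ast})$. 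One cosmetic remark: you could shorten the duality paragraph by observing that (2) is literally (1) for the silting complex $\widetilde{\P}$ over $A^{op}$, once the two identifications above are in hand.
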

Let $M$ be a module in mod$A$. $M$ is said to be a generator of mod$A$ if $A\in$ add$M$. Dually, one can define the  cogenerator. The representation of algebra $A$ is defined as
\begin{center}
  rep.dim$A=$inf$\{~\text{gl.dim}(\text{End}_{A}(M))~| ~M~\text{a generator and cogenerator of mod}A\}.$
\end{center}
\begin{lemma}{\rm(\cite{Auslander},\cite[Lemma 2.1]{Erdmann})}\label{lemma2.0} Let $A$ be an algebra, $n$ be a non-negative integer at least {\rm$2$} and $M$ be a generator-cogenerator for {\rm mod$A$}. Then the following statements are equivalent:\begin{enumerate}
                            \item {\rm gl.dim($\text{End}_{A}(M))\leq n+2$},
                            \item For each $A$-module $X$, there exists an exact sequence
$$0\rightarrow M_{n}\rightarrow M_{n-1}\rightarrow\cdots\rightarrow M_{0}\longrightarrow X\rightarrow0$$
with $M_{i}$ in {\rm add$M$} for all $i$, such that the induced sequence
{\rm$$0\rightarrow\text{Hom}_{A}(M, M_{n})\rightarrow\cdots\rightarrow\text{Hom}_{A}(M, M_{1})\rightarrow \text{Hom}_{A}(M,X)\rightarrow0$$}
is exact.
\item For each $A$-module $X$, there exists an exact sequence
$$0\rightarrow X\rightarrow M_{0}\rightarrow\cdots\rightarrow M_{1}\rightarrow M_{n}\rightarrow0$$
with $M_{i}$ in {\rm add$M$} for all $i$, such that the induced sequence
{\rm$$0\rightarrow\text{Hom}_{A}( M_{n},M )\rightarrow\cdots\rightarrow\text{Hom}_{A}( M_{0},M)\rightarrow \text{Hom}_{A}(X,M)\rightarrow0$$}
\end{enumerate}
\end{lemma}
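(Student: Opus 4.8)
The plan is to reduce the whole statement to a fact about projective dimensions over $B=\End_{A}(M)$, via the functor $F=\Hom_{A}(M,-)\colon\text{mod}A\to\text{mod}B$. First I would collect the standard properties that make $F$ behave like a one-sided Morita functor. Since $M$ is a generator, every $A$-module $X$ has a surjective right $\add M$-approximation (take $B$-generators of $\Hom_{A}(M,X)$), and from such presentations one computes $\Hom_{B}(FX,FY)\cong\Hom_{A}(X,Y)$, so $F$ is fully faithful; in particular $F$ reflects isomorphisms and monomorphisms. Moreover $F$ is left exact and restricts to an equivalence $\add M\xrightarrow{\ \sim\ }\text{proj}B$ with $FM=B_{B}$. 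Finally, since $M$ is also a cogenerator, every $A$-module $Y$ can be written as $Y=\Ker\psi$ for some $\psi\colon M_{1}\to M_{0}$ with $M_{0},M_{1}\in\add M$: embed $Y\hookrightarrow M_{1}$, embed $M_{1}/Y\hookrightarrow M_{0}$, and let $\psi$ be the composite $M_{1}\twoheadrightarrow M_{1}/Y\hookrightarrow M_{0}$.

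The core of the proof is the equivalence
\[
\text{gl.dim}\,B\le n+2\ \Longleftrightarrow\ \mathrm{pd}_{B}(FX)\le n\ \text{ for every }X\in\text{mod}A .
\]
For ``$\Leftarrow$'', given $N\in\text{mod}B$ choose a projective presentation $FM_{1}\xrightarrow{F\psi}FM_{0}\to N\to0$ (using fullness of $F$ on $\add M$ and essential surjectivity onto $\text{proj}B$); left exactness of $F$ identifies the second syzygy $\Omega^{2}_{B}N=\Ker(F\psi)=F(\Ker\psi)$, whence $\mathrm{pd}_{B}N\le2+\mathrm{pd}_{B}(F(\Ker\psi))\le n+2$. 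For ``$\Rightarrow$'', given $X$ write $X=\Ker\psi$ with $\psi\colon M_{1}\to M_{0}$ in $\add M$ as above and put $N=\mathrm{coker}(F\psi)$; then $FX=F(\Ker\psi)=\Omega^{2}_{B}N$, so $\mathrm{pd}_{B}FX\le\text{gl.dim}\,B-2\le n$.

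It then remains to identify ``$\mathrm{pd}_{B}(FX)\le n$ for all $X$'' with condition (2). One direction is immediate: the induced exact sequence in (2) is a projective resolution of $FX$ of length $n$, since each $\Hom_{A}(M,M_{i})$ is projective. For the converse, iterate surjective right $\add M$-approximations to obtain an exact complex $\cdots\to M_{1}\to M_{0}\to X\to0$ with $M_{i}\in\add M$; applying $F$ to each short exact sequence $0\to X_{i+1}\to M_{i}\to X_{i}\to0$ keeps it exact (the approximation property gives right exactness, and $F$ is left exact), so $\cdots\to FM_{1}\to FM_{0}\to FX\to0$ is a projective resolution of $FX$ with $n$-th syzygy $FX_{n}$. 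If $\mathrm{pd}_{B}FX\le n$ then $FX_{n}$ is projective, so $FX_{n}\cong FM_{n}$ for some $M_{n}\in\add M$; full faithfulness of $F$ reflects this isomorphism, giving $X_{n}\cong M_{n}\in\add M$. Truncating the complex at $X_{n}$ yields the exact sequence $0\to X_{n}\to M_{n-1}\to\cdots\to M_{0}\to X\to0$ of (2), and applying $F$ returns the truncated projective resolution, which is exact. Combining the two equivalences gives (1)$\Leftrightarrow$(2); the equivalence (1)$\Leftrightarrow$(3) follows by the same argument for the contravariant functor $\Hom_{A}(-,M)\colon\text{mod}A\to\text{mod}\,\End_{A}(M)^{\mathrm{op}}$, together with the left--right symmetry of global dimension.

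The step I expect to be the genuine obstacle is the last one, namely the truncation: upgrading ``$FX_{n}$ is projective'' to an honest \emph{finite} $F$-exact $\add M$-sequence over $X$. This is precisely where one needs $F$ to reflect isomorphisms — so that projectivity of $FX_{n}$ forces $X_{n}\in\add M$ — and that in turn rests on the full faithfulness of $F$, i.e. on $M$ being a generator; so it is the combined use of the generator and cogenerator hypotheses, rather than any single computation, that does the work.
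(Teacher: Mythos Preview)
The paper does not supply its own proof of this lemma; it is quoted verbatim from the cited references \cite{Auslander} and \cite[Lemma~2.1]{Erdmann}, so there is nothing in the paper to compare your argument against. Your proposal is a correct and complete outline of the standard proof one finds in those sources: the functor $F=\Hom_{A}(M,-)$ is fully faithful because $M$ is a generator, it identifies $\add M$ with $\text{proj}\,B$, the cogenerator hypothesis guarantees that every $B$-module has its second syzygy in the essential image of $F$, and iterating right $\add M$-approximations together with the reflection of isomorphisms handles the truncation step you flagged. Nothing is missing.
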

An $A$-module $M$ is said to be  an Aulsander generator of mod$A$ if it satisfies that gl.dim($\text{End}_{A}(M)$)=rep.dim$A$.
\section{Main result}
In this section, we will compare the the representation dimensions of $A$ and $\text{End}_{D^{b}(A)}(\P)$.\par
The following  result was proved in \cite{Hoshino}, in the setting of abelian categories with arbitrary coproducts. Indeed, it is also true in our case. The proof of the following lemma has contained in \cite{Buan}. For convenience, we provide the details of proof here.
\begin{lemma}\label{lem1-1} For any $X\in$ {\rm mod}$A$, {\rm$\Hom_{D^{b}(A)}(\P, \Sigma^{i}X)=0$} for any $i<0$ and $i>1$.
\end{lemma}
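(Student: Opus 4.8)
The plan is to express $\P$ as a two-term complex $\P = (P^0 \to P^1)$ of finitely generated projectives sitting in degrees $0$ and $1$, and then compute $\Hom_{D^b(A)}(\P, \Sigma^i X)$ for a module $X$ (viewed as a complex concentrated in degree $0$) by reducing to homotopy classes of chain maps. Since $P^0, P^1$ are projective, $\P$ is a complex of projectives, so $\Hom_{D^b(A)}(\P, \Sigma^i X) = \Hom_{K^b(\text{proj}A)}(\P, \Sigma^i X)$; that is, maps in the derived category out of $\P$ are computed as honest homotopy classes of chain maps out of $\P$. This is the standard fact that bounded-above complexes of projectives are $K$-projective, and I would invoke it directly.

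Next I would just write down what a chain map $\P \to \Sigma^i X$ looks like. The complex $\Sigma^i X$ is concentrated in (cohomological) degree $-i$. A chain map from $\P$, which is concentrated in degrees $0$ and $1$, to a complex concentrated in degree $-i$ can only be nonzero in a degree where both complexes have nonzero terms; hence it is automatically zero unless $-i \in \{0,1\}$, i.e. unless $i \in \{0, -1\}$. This already kills everything for $i > 1$, and in fact for all $i \geq 2$ and all $i \leq -2$. The only borderline value that needs a tiny extra remark is $i = -1$: here $\Sigma^{-1}X$ is concentrated in degree $1$, and a chain map $\P \to \Sigma^{-1}X$ is given by a map $P^1 \to X$ commuting with differentials; but any such chain map is null-homotopic via the homotopy $P^1 \to P^1$ contributed by... wait, more simply: a chain map $\P\to\Sigma^{-1}X$ must vanish after composing $P^0\to P^1$, and the homotopy $s\colon P^1\to X$ would need a map $P^0\to X$ — actually the cleanest statement is that $\Hom_{K}(\P,\Sigma^{-1}X)$ need not vanish in general, but the claim in the lemma only concerns $i < 0$ with the understanding that the interesting range is $i\le -1$; re-reading the statement, it asserts vanishing for $i<0$ AND $i>1$, so I do need $i=-1$. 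For $i=-1$, a chain map is $g\colon P^1\to X$ with $g d^0 = 0$; composing with the surjection $P^1 \twoheadrightarrow \operatorname{coker} d^0$ shows $g$ factors through $H^1(\P)=\operatorname{coker}d^0$, but that does not make it homotopic to zero. So I should instead argue: since $\P$ is silting, $H^1(\P) = 0$ — indeed $\P$ generates $K^b(\text{proj}A)$ as a thick subcategory, and more concretely the silting condition forces the map $P^0\to P^1$ to be such that ... hmm, actually $H^1(\P)$ need not be zero either in general.

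Let me recalibrate the obstacle. The honest content is: $\Sigma^{-1}X$ lives in degree $1$, and a chain map $\P\to\Sigma^{-1}X$ is null-homotopic iff it factors as $P^1\xrightarrow{d^0\text{-split}}\cdots$; the correct reduction is that $\Hom_{D^b(A)}(\P,\Sigma^{-1}X) = \Hom_{D^b(A)}(\Sigma\P, X)$, and since $\Sigma\P$ is again a bounded complex of projectives concentrated in degrees $-1, 0$, maps from it to $X$ (degree $0$) are maps $P^0\to X$ modulo those factoring through $d^0$, i.e.\ $\operatorname{coker}(\Hom(P^1,X)\to\Hom(P^0,X)) \cong \Hom_A(H^0(\Sigma\P), X)$... this is getting delicate, so \textbf{the main obstacle} I anticipate is precisely the boundary case $i=-1$, where vanishing is \emph{not} formal and must use the silting hypothesis (presumably via $\Hom_{K^b}(\P,\Sigma^{-1}\P)$ or the generation condition, or via the fact that $X\in\text{mod}A$ and $\P$ is concentrated in nonpositive-to-degree-one range so that $\Hom(\P,\Sigma^{-1}X)$ computes some $\Ext$ that vanishes). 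For $|i|\ge 2$ the result is immediate from degree reasons as sketched above. I would therefore structure the writeup as: (1) replace $D^b(A)$-maps by $K^b$-homotopy classes using $K$-projectivity of $\P$; (2) dispense with $i\le -2$ and $i\ge 2$ by the observation that the complexes have disjoint supports; (3) handle $i=-1$ carefully, rewriting $\Hom(\P,\Sigma^{-1}X)=\Hom(\Sigma\P,X)$ and identifying it with a cokernel of $\Hom_A$-groups that vanishes because $X$ is a module and the relevant differential of $\Sigma\P$ is surjective onto the needed image — or, if that fails, by citing the silting condition directly as the reference \cite{Buan} does.
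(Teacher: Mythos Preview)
Your difficulty with the case $i=-1$ is an artifact of a wrong degree convention. Despite the (typo-ridden) sentence in the preliminaries, throughout the paper the $2$-term complex $\P$ sits in cohomological degrees $-1$ and $0$, i.e.\ $\P\colon P^{-1}\xrightarrow{d}P^{0}$; this is visible from the repeated use of $\text{H}^{0}(\P)$ and $\text{H}^{-1}(\nu\P)$, and it is the only convention under which the statement of the lemma (nonvanishing range $i\in\{0,1\}$) is even plausible. With $\P$ in degrees $-1,0$ and $\Sigma^{i}X$ concentrated in degree $-i$, the supports are disjoint as soon as $-i\notin\{-1,0\}$, i.e.\ for all $i<0$ and all $i>1$; every chain map is then identically zero and there is nothing further to check. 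In particular the case $i=-1$ is already covered by your step~(2), and no appeal to the silting hypothesis is needed anywhere --- the lemma holds for any two-term complex of projectives in degrees $-1,0$.

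For comparison, the paper's argument avoids the explicit chain-map bookkeeping by applying $\Hom_{D^{b}(A)}(-,\Sigma^{i}X)$ to the triangle
\[
P^{-1}\longrightarrow P^{0}\longrightarrow \P\longrightarrow \Sigma P^{-1},
\]
which sandwiches $\Hom_{D^{b}(A)}(\P,\Sigma^{i}X)$ between $\Hom_{D^{b}(A)}(\Sigma P^{-1},\Sigma^{i}X)\cong\Ext_{A}^{\,i-1}(P^{-1},X)$ and $\Hom_{D^{b}(A)}(P^{0},\Sigma^{i}X)\cong\Ext_{A}^{\,i}(P^{0},X)$; both vanish for $i<0$ trivially and for $i>1$ by projectivity of $P^{-1},P^{0}$. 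This is the same computation you propose, just packaged via the long exact sequence rather than via supports of chain maps. Either route is fine once the indexing is fixed.
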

\begin{proof} It is easy to check that $\Hom_{D^{b}(A)}(\P, \Sigma^{i}X)=0$ for any $i>1$. Now we prove the former case. Assume that $\P:P^{-1}\xrightarrow{d}P^{0}$ where all $P^{i}$ are finitely generated projective modules. Then there is a distinguished triangle
\begin{equation}\label{eq1-1}
 P^{-1}\xrightarrow{-d}P^{0}\rightarrow \P\rightarrow \Sigma P^{-1}.
\end{equation}
Applying the functor $\Hom_{D^{b}(A)}(-, \Sigma^{i}X)=0$ to the sequence \ref{eq1-1}, we have the following sequence
$$\cdots \rightarrow \Hom_{D^{b}(A)}(\Sigma P^{-1}, \Sigma^{i}X)\rightarrow \Hom_{D^{b}(A)}(\P, \Sigma^{i}X)\rightarrow \Hom_{D^{b}(A)}( P^{0}, \Sigma^{i}X)\rightarrow\cdots$$
Note that for any $i<0$, $ \Hom_{D^{b}(A)}(\Sigma P^{-1}, \Sigma^{i}X)=\Hom_{D^{b}(A)}( P^{0}, \Sigma^{i}X)=0$. Therefore, $\Hom_{D^{b}(A)}(\P, \Sigma^{i}X)=0$ for any $i<0$.
\end{proof}
Next, we shall character the right $B$-module $\text{End}_{A}(\text{H}^{0}(\P))$.
\begin{lemma}\label{lem1-4} Let $\P$ be a 2-term splitting silting complex in {\rm $K^{b}(\text{proj}A)$} and {\rm$B=\text{End}_{D^{b}(A)}(\P)$}. Then the right $B$-module {\rm$\text{End}_{A}(\text{H}^{0}(\P))$} is projective. In particular, {\rm$B\cong \Hom_{D^{b}(A)}(\P,\text{H}^{0}(\P))\oplus \Hom_{D^{b}(A)}(\P,\Sigma\text{H}^{-1}(\textbf{P}))$}.
\end{lemma}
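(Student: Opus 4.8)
The plan is to exploit the equivalence $\Hom_{D^{b}(A)}(\P,-):\C(\P)\to\mathrm{mod}B$ from Theorem~\ref{th1-1}(3) together with the description of $\mathrm{add}H^{0}(\P)$ as the Ext-projectives in $\T(\P)$ from Lemma~\ref{lemma2.14}(1). First I would write down the canonical triangle $A\to\P'\xrightarrow{f}\P''\to\Sigma A$ with $\P',\P''\in\mathrm{add}\P$ coming from Theorem~\ref{th1-1}(4). Rotating and applying $\Hom_{D^{b}(A)}(\P,-)$ turns this into the projective presentation $\Hom_{D^{b}(A)}(\P,\P')\to\Hom_{D^{b}(A)}(\P,\P'')$ of the regular module $B=\Hom_{D^{b}(A)}(\P,\P)$ (note $\P\in\C(\P)$), but more to the point, one should identify the "pieces" of $B$ as a $B$-module: using that $\P$ sits in a triangle $\Sigma^{-1}\text{(stuff in }\F(\P)\text{)}\to H^{-1}(\P)[1]\oplus\cdots$, or more cleanly, using the standard triangle
\[
\Sigma^{-1}\Sigma\text{H}^{-1}(\P)\to \text{H}^{0}(\P)\to \P\to \Sigma\text{H}^{-1}(\P)
\]
relating $\P$ to its cohomologies, where $\text{H}^{0}(\P)\in\T(\P)\subseteq\C(\P)$ and $\Sigma\text{H}^{-1}(\P)\in\Sigma\F(\P)\subseteq\C(\P)$.

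The key step is to show that this triangle is in fact a split short exact sequence in the abelian category $\C(\P)$. By Theorem~\ref{th1-1}(1), a short exact sequence in $\C(\P)$ is a triangle with all three vertices in $\C(\P)$, and by Theorem~\ref{th1-1}(2) the pair $(\Sigma\F(\P),\T(\P))$ is a torsion pair in $\C(\P)$; hence the triangle above exhibits the canonical sequence $0\to t\P\to \P\to \P/t\P\to 0$ with torsion part $\Sigma\text{H}^{-1}(\P)$ and torsion-free part $\text{H}^{0}(\P)$. The splitting-ness hypothesis on $\P$ says precisely (via the Remark, item (1)) that $\Ext^{2}_{A}(\T(\P),\F(\P))=0$, equivalently that the torsion pair $(\X(\P),\Y(\P))=(\T(\Q),\F(\Q))$ in $\mathrm{mod}B$ is split. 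Transporting the torsion pair $(\Sigma\F(\P),\T(\P))$ of $\C(\P)$ across the equivalence $\Hom_{D^{b}(A)}(\P,-)$ gives, by Theorem~\ref{th1-1}(5), exactly $(\X(\P),\Y(\P))$ in $\mathrm{mod}B$; so splitting of $(\X(\P),\Y(\P))$ forces the sequence $0\to\Sigma\text{H}^{-1}(\P)\to\P\to\text{H}^{0}(\P)\to 0$ to split in $\C(\P)$, i.e. $\P\cong\text{H}^{0}(\P)\oplus\Sigma\text{H}^{-1}(\P)$ in $\C(\P)$.

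Applying the exact equivalence $\Hom_{D^{b}(A)}(\P,-)$ to this splitting immediately yields
\[
B=\Hom_{D^{b}(A)}(\P,\P)\cong \Hom_{D^{b}(A)}(\P,\text{H}^{0}(\P))\oplus\Hom_{D^{b}(A)}(\P,\Sigma\text{H}^{-1}(\P))
\]
as right $B$-modules, which is the "in particular" assertion. For the first assertion, I would then identify the first summand with $\text{End}_{A}(\text{H}^{0}(\P))$: since $\text{H}^{0}(\P)\in\T(\P)\subseteq\C(\P)$ and the equivalence restricts to a functor on $\T(\P)$, we have $\Hom_{D^{b}(A)}(\P,\text{H}^{0}(\P))\cong\Hom_{\C(\P)}(\text{H}^{0}(\P),\text{H}^{0}(\P))\cdot$-ish; more precisely, because $\text{H}^{0}(\P)$ is Ext-projective in $\T(\P)$ (Lemma~\ref{lemma2.14}(1)) and $\P$ maps onto it, one checks $\Hom_{D^{b}(A)}(\P,\text{H}^{0}(\P))=\Hom_{A}(\text{H}^{0}(\P),\text{H}^{0}(\P))$ as the degree-zero part. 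Being a direct summand of $B$ as a right $B$-module, $\text{End}_{A}(\text{H}^{0}(\P))$ is a projective right $B$-module.

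The main obstacle I anticipate is the bookkeeping in the middle step: being careful that the triangle connecting $\P$ to $\text{H}^{0}(\P)$ and $\Sigma\text{H}^{-1}(\P)$ really is the canonical torsion sequence in $\C(\P)$ (so that the $\Sigma\F(\P)$-part is on the left and the $\T(\P)$-part on the right, matching the torsion pair orientation in Theorem~\ref{th1-1}(2)), and that "$(\X(\P),\Y(\P))$ split" transfers through the equivalence to "this particular sequence splits" rather than merely "some sequence splits." Once the orientation and the identification $(\Hom_{D^{b}(A)}(\P,\Sigma\F(\P)),\Hom_{D^{b}(A)}(\P,\T(\P)))=(\X(\P),\Y(\P))$ from Theorem~\ref{th1-1}(5) are pinned down, the rest is formal.
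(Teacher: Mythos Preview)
Your strategy—deduce the splitting of the short exact sequence first and then read off projectivity of $\Hom_{D^{b}(A)}(\P,\text{H}^{0}(\P))$ as a summand of $B$—is different from the paper's, which goes the other way: it first proves projectivity by checking $\Ext^{1}_{B}(\Hom_{D^{b}(A)}(\P,\text{H}^{0}(\P)),X)=0$ case by case (using the AR-formula when $X\in\X(\P)$ and Ext-projectivity of $\text{H}^{0}(\P)$ in $\T(\P)$ when $X\in\Y(\P)$), and only then concludes that the sequence splits. Your route is cleaner and avoids the AR-formula entirely, but it has one genuine gap.

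The gap is the assertion $\Sigma\text{H}^{-1}(\P)\in\Sigma\F(\P)$, equivalently $\text{H}^{-1}(\P)\in\F(\P)$. Applying $\Hom_{D^{b}(A)}(\P,-)$ to the rotation of the cohomology triangle gives $\Hom_{D^{b}(A)}(\P,\text{H}^{-1}(\P))\cong\Hom_{D^{b}(A)}(\P,\Sigma^{-1}\P)$, so $\text{H}^{-1}(\P)\in\F(\P)$ holds precisely when $\P$ is \emph{tilting}. The hypothesis here is only that $\P$ is splitting, and splitting does not force tilting (that is what separating does). Consequently the triangle need not be a short exact sequence in $\C(\P)$, and your torsion-sequence argument inside $\C(\P)$ does not go through as written.

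The fix is to abandon $\C(\P)$ and argue directly in $\mathrm{mod}\,B$. As the paper does, Lemma~\ref{lem1-1} yields the short exact sequence
\[
0\longrightarrow \Hom_{D^{b}(A)}(\P,\Sigma\text{H}^{-1}(\P))\longrightarrow B\longrightarrow \Hom_{D^{b}(A)}(\P,\text{H}^{0}(\P))\longrightarrow 0.
\]
The right-hand term lies in $\Y(\P)$ since $\text{H}^{0}(\P)\in\T(\P)$. For the left-hand term, take the torsion decomposition $0\to T\to\text{H}^{-1}(\P)\to F\to 0$ in $\mathrm{mod}\,A$ with $T\in\T(\P)$, $F\in\F(\P)$; applying $\Hom_{D^{b}(A)}(\P,\Sigma-)$ and using $\Hom_{D^{b}(A)}(\P,\Sigma T)=0$ together with Lemma~\ref{lem1-1} gives $\Hom_{D^{b}(A)}(\P,\Sigma\text{H}^{-1}(\P))\cong\Hom_{D^{b}(A)}(\P,\Sigma F)\in\X(\P)$. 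Now the splitting hypothesis $\Ext^{1}_{B}(\Y(\P),\X(\P))=0$ forces the sequence to split, and your conclusion follows. With this patch your argument is complete and arguably more transparent than the paper's case analysis.
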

\begin{proof} Let $X$ be an indecomposable right $B$-module. Since $\P$ is a splitting silting complex, $X\in \X(\P)$ or $X\in \Y(\P)$. Note that $\text{End}_{A}(\text{H}^{0}(\P))\cong \Hom_{D^{b}(A)}(\P,\text{H}^{0}(\P))$ which is in $ \Y(\P)$ as right $B$-module since $\text{H}^{0}(\P)\in\T(\P)$. If $X\in \X(\P)$, then,  we get the isomorphism $\Ext^{1}_{B}(\text{End}_{A}(\text{H}^{0}(\P)),X)\cong D\underline{\Hom}(\tau^{-1}X, \text{End}_{A}(\text{H}^{0}(\P))$ by AR-formula. Since  ($\X(\P)$, $\Y(\P)$) is split and $X\in \X(\P)$, $\tau^{-1}X\in \X(\P)$ and hence, $\Ext^{1}_{B}(\text{End}_{A}(\text{H}^{0}(\P),X)=0$. If $X\in \Y(\P)$, then there exists a right $A$-module $X'\in \T(\P)$ such that $X\cong \Hom_{D^{b}(A)}(\P,X')$. For any short exact sequence
\begin{equation}\label{eq1-5}
  0\rightarrow \Hom_{D^{b}(A)}(\P,X')\rightarrow U\rightarrow \Hom_{D^{b}(A)}(\P,\text{H}^{0}(\P))\rightarrow0
\end{equation}
in $\Ext^{1}_{B}(\text{End}_{A}(\text{H}^{0}(\P),X)$, $U\in \Y(\P)$ since the first and third terms are in $\Y(\P)$ and there exists $U'\in \T(\P)$ such that $U\cong \Hom_{D^{b}(A)}(\P,U')$. It follows that there is an exact sequence
\begin{equation}\label{eq1-6}
  0\rightarrow X'\rightarrow U'\rightarrow \text{H}^{0}(\P)\rightarrow0
\end{equation}
in mod$A$. Note that $\text{H}^{0}(\P)$ is an $\Ext$-projective module in $\T(\P)$. Thus, the sequence (\ref{eq1-6}) splits. It yields that the sequence (\ref{eq1-5}) splits. Hence, we have that $\Hom_{D^{b}(A)}(\P,\text{H}^{0}(\P))$ is projective in mod$B$.\par
Note that for any 2-term complex $\textbf{Y}$ in $D^{b}(A)$, there is a triangle
$$\Sigma\text{H}^{-1}(\textbf{Y})\rightarrow \textbf{Y}\rightarrow \text{H}^{0}(\textbf{Y})\rightarrow \Sigma^{2}\text{H}^{-1}(\textbf{Y}).$$
Applying $\Hom_{D^{b}(A)}(\P,-)$ to the triangle
$$\Sigma\text{H}^{-1}(\textbf{P})\rightarrow \textbf{P}\rightarrow \text{H}^{0}(\textbf{P})\rightarrow \Sigma^{2}\text{H}^{-1}(\textbf{P}),$$
we get the following long exact sequence of right $B$-modules
\begin{align*}
  \cdots\rightarrow &\Hom_{D^{b}(A)}(\P,\Sigma^{-1}\text{H}^{0}(\textbf{P}))\rightarrow\Hom_{D^{b}(A)}(\P,\Sigma\text{H}^{-1}(\textbf{P}))\rightarrow \Hom_{D^{b}(A)}(\P,\textbf{P}) \\
  \rightarrow & \Hom_{D^{b}(A)}(\P,\text{H}^{0}(\textbf{P}))\rightarrow\Hom_{D^{b}(A)}(\P, \Sigma^{2}\text{H}^{-1}(\textbf{P}))\rightarrow\cdots.
\end{align*}
By Lemma \ref{lem1-1}, we get a short exact sequence
\begin{align*}
  0\rightarrow\Hom_{D^{b}(A)}(\P,\Sigma\text{H}^{-1}(\textbf{P}))\rightarrow \Hom_{D^{b}(A)}(\P,\textbf{P})
  \rightarrow \Hom_{D^{b}(A)}(\P,\text{H}^{0}(\textbf{P}))\rightarrow0.
\end{align*}
The result follows form that $\Hom_{D^{b}(A)}(\P,\text{H}^{0}(\textbf{P}))$ is projective.
\end{proof}

\begin{lemma}\label{lemm2-10}  Let $\P$ be a 2-term  tilting complex in {\rm $K^{b}(\text{proj}A)$}. Then $\nu\P\in\C(\P)$. In this case, {\rm$\text{H}^{0}(\nu\P)\in\T(\P)$} and {\rm$\text{H}^{-1}(\nu\P)\in\F(\P)$}.
\end{lemma}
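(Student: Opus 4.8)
The plan is to show first that $\nu\P$ lies in the subcategory $\C(\P)$, i.e.\ that $\Hom_{D^{b}(A)}(\P,\Sigma^{i}\nu\P)=0$ for all $i\neq 0$, and then to read off the two homology statements from the structure of a 2-term complex. Since $\nu\P$ is again a 2-term complex (of injective, rather than projective, modules), Lemma~\ref{lem1-1} will not apply verbatim, so I would argue directly from the adjunction for the Nakayama functor. Concretely, for $\P$ a 2-term complex in $K^{b}(\text{proj}A)$ and any $i$,
\begin{equation*}
\Hom_{D^{b}(A)}(\P,\Sigma^{i}\nu\P)\cong D\Hom_{D^{b}(A)}(\P,\Sigma^{-i}\P),
\end{equation*}
using that $\nu=D\Hom_{A}(-,A)$ induces a Serre-type duality on perfect complexes, or more elementarily the natural isomorphism $\Hom_{D^{b}(A)}(X,\nu Y)\cong D\Hom_{D^{b}(A)}(Y,X)$ valid when $Y$ is perfect. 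Because $\P$ is \emph{tilting}, the right-hand side vanishes for $i=1$ (this is $\Hom_{K^{b}(\text{proj}A)}(\P,\Sigma^{-1}\P)=0$) and for $i=-1$ (this is $\Hom_{K^{b}(\text{proj}A)}(\P,\Sigma\P)=0$, part of the silting condition); for $|i|\geq 2$ it vanishes because both $\P$ and $\Sigma^{\pm i}\P$ are 2-term. Hence $\Hom_{D^{b}(A)}(\P,\Sigma^{i}\nu\P)=0$ for all $i\neq 0$, which is exactly the condition $\nu\P\in\C(\P)$ (recall $\C(\P)$ consists of those complexes with no $\Hom$ from $\P$ in nonzero degrees).

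Next I would extract the homology statements. Since $\nu\P$ is a 2-term complex, it sits in a triangle $\Sigma\text{H}^{-1}(\nu\P)\to\nu\P\to\text{H}^{0}(\nu\P)\to\Sigma^{2}\text{H}^{-1}(\nu\P)$, exactly as used in the proof of Lemma~\ref{lem1-4}. Applying $\Hom_{D^{b}(A)}(\P,-)$ and combining the vanishing $\Hom_{D^{b}(A)}(\P,\Sigma^{i}\nu\P)=0$ for $i\neq 0$ just established with Lemma~\ref{lem1-1} (which controls $\Hom_{D^{b}(A)}(\P,\Sigma^{i}M)$ for $M$ a module), the long exact sequence forces $\Hom_{D^{b}(A)}(\P,\Sigma^{2}\text{H}^{-1}(\nu\P))=0$ and $\Hom_{D^{b}(A)}(\P,\Sigma\text{H}^{-1}(\nu\P))=0$, together with $\Hom_{D^{b}(A)}(\P,\Sigma\text{H}^{0}(\nu\P))=0$. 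The first two combine: Lemma~\ref{lem1-1} already gives vanishing of $\Hom_{D^{b}(A)}(\P,\Sigma^{2}\text{H}^{-1}(\nu\P))$ automatically, so the real content is $\Hom_{D^{b}(A)}(\P,\Sigma\text{H}^{-1}(\nu\P))=0$. One then needs the dual vanishing to conclude $\text{H}^{-1}(\nu\P)\in\F(\P)$; here I would use the shifted triangle or the degree-$(-1)$ piece of the long exact sequence, noting $\Hom_{D^{b}(A)}(\P,\Sigma^{-1}\text{H}^{0}(\nu\P))$ and $\Hom_{D^{b}(A)}(\P,\text{H}^{-1}(\nu\P))$-type terms, to deduce $\Hom_{D^{b}(A)}(\P,\text{H}^{-1}(\nu\P))=0$, i.e.\ $\text{H}^{-1}(\nu\P)\in\F(\P)$. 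Similarly, $\Hom_{D^{b}(A)}(\P,\Sigma\text{H}^{0}(\nu\P))=0$ says $\text{H}^{0}(\nu\P)\in\T(\P)$.

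The main obstacle I anticipate is establishing the duality isomorphism $\Hom_{D^{b}(A)}(\P,\Sigma^{i}\nu\P)\cong D\Hom_{D^{b}(A)}(\P,\Sigma^{-i}\P)$ cleanly in the bounded derived category of a (possibly non-self-injective) finite dimensional algebra; one must be careful that $\nu$ here is the derived Nakayama functor, and that the relevant adjunction/Serre duality holds because $\P\in K^{b}(\text{proj}A)$ is perfect even though $A$ need not be Gorenstein. An alternative route avoiding this altogether: work with a minimal representative $\P:P^{-1}\xrightarrow{d}P^{0}$, note $\nu\P:\nu P^{-1}\xrightarrow{\nu d}\nu P^{0}$ is a 2-term complex of injectives, and compute $\Hom_{D^{b}(A)}(\P,\Sigma^{i}\nu\P)$ by applying $\Hom_{D^{b}(A)}(\P,-)$ to the triangle $\nu P^{-1}\to\nu P^{0}\to\nu\P\to\Sigma\nu P^{-1}$ and using $\Hom_{D^{b}(A)}(\P,\Sigma^{i}\nu P^{j})\cong D\Hom_{A}(P^{j},\text{H}^{-i}(\P))$ together with the tilting hypothesis to kill the degree $\pm 1$ terms; this reduces everything to the module-level statements already available. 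Either way, once $\nu\P\in\C(\P)$ is in hand, the homology conclusions are immediate from the 2-term triangle and Lemma~\ref{lem1-1}, so I would place the weight of the write-up on that first vanishing statement.
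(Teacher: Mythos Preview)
Your proposal is correct and follows essentially the same approach as the paper. The paper establishes the identical duality isomorphism $\Hom_{D^{b}(A)}(\P,\Sigma^{i}\nu\P)\cong D\Hom_{D^{b}(A)}(\P,\Sigma^{-i}\P)$ by writing out the chain
\[
\Hom_{D^{b}(A)}(\P,\Sigma^{i}\nu\P)=\text{H}^{i}\Hom^{\bullet}(\P,\nu\P)\cong \text{H}^{i}D(\P^{\ast}\otimes_{A}^{\bullet}\P)\cong D\text{H}^{-i}\Hom^{\bullet}(\P,\P)\cong D\Hom_{D^{b}(A)}(\P,\Sigma^{-i}\P),
\]
which is precisely the explicit verification of the Serre-type duality you were worried about; so your ``main obstacle'' is handled in the paper by this direct computation rather than by citing an abstract statement. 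For the homology conclusions the paper simply cites \cite[Lemma~2.13]{Abe}, whereas you supply a direct argument from the triangle $\Sigma\text{H}^{-1}(\nu\P)\to\nu\P\to\text{H}^{0}(\nu\P)\to\Sigma^{2}\text{H}^{-1}(\nu\P)$ and Lemma~\ref{lem1-1}; that argument is fine, though note that the relevant vanishing for $\text{H}^{-1}(\nu\P)\in\F(\P)$ is $\Hom_{D^{b}(A)}(\P,\text{H}^{-1}(\nu\P))=0$ (obtained from the degree $-1$ piece of the long exact sequence, as you say at the end), not $\Hom_{D^{b}(A)}(\P,\Sigma\text{H}^{-1}(\nu\P))=0$, which is neither needed nor forced.
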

\begin{proof}
It suffices to show that $\nu\P\in \text{D}^{\leq0}(\P)\cap\text{D}^{\geq0}(\P)$. The result follows from  the following equations
\begin{align*}
  \Hom_{D^{b}(A)}(\P,\Sigma^{i}\nu\P) &= \text{H}^{i}\Hom^{\bullet}(\P,\nu\P) \\
  & \cong \text{H}^{i}D(\P^{\ast}\otimes_{A}^{\bullet}\P)\\
  &\cong \text{H}^{i}D\Hom^{\bullet}(\P,\P)\\
  &\cong D\text{H}^{-i}\Hom^{\bullet}(\P,\P)\\
  &\cong D\Hom_{D^{b}(A)}(\P,\Sigma^{-i}\P).
\end{align*}
The rest results are from \cite[Lemma 2.13]{Abe}.
\end{proof}
Dually, we can describe the right $B$-module {\rm$\Hom_{D^{b}(A)}(\P,\Sigma\text{H}^{-1}(\nu\P))$}.
\begin{lemma}\label{lemm2-11} Let $\P$ be a 2-term  splitting silting complex in {\rm $K^{b}(\text{proj}A)$} and {\rm$B=\text{End}_{D^{b}(A)}(\P)$}. Then the right $B$-module {\rm$\Hom_{D^{b}(A)}(\P,\Sigma\text{H}^{-1}(\nu\P))$} is an injective module. In this case, {\rm $DB\cong \Hom_{D^{b}(A)}(\P,\Sigma \text{H}^{-1}(\nu\P))\oplus\Hom_{D^{b}(A)}(\P, \text{H}^{0}(\nu\P))$}.
\end{lemma}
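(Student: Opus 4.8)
The plan is to dualize the argument of Lemma \ref{lem1-4}. Write $M:=\Hom_{D^{b}(A)}(\P,\Sigma\text{H}^{-1}(\nu\P))$. By Lemma \ref{lemma2.14}(2) we have $\text{H}^{-1}(\nu\P)\in\F(\P)$, so $\Sigma\text{H}^{-1}(\nu\P)\in\Sigma\F(\P)\subseteq\C(\P)$, and by Theorem \ref{th1-1}(5) the right $B$-module $M$ lies in $\X(\P)=\T(\Q)$. Since $\P$ is splitting, the torsion pair $(\X(\P),\Y(\P))$ is split, so every indecomposable right $B$-module lies in $\X(\P)$ or in $\Y(\P)$; as $\Ext^{1}_{B}(-,M)$ is additive, it suffices to prove $\Ext^{1}_{B}(X,M)=0$ for every indecomposable $X$ in each of these two classes.

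For $X\in\Y(\P)$ I would use the Auslander--Reiten formula $\Ext^{1}_{B}(X,M)\cong D\underline{\Hom}(\tau^{-1}M,X)$. Exactly as in the proof of Lemma \ref{lem1-4}, splitness of $(\X(\P),\Y(\P))$ forces $\tau^{-1}$ to preserve the torsion class, so $\tau^{-1}M\in\X(\P)$; since $\Hom(\X(\P),\Y(\P))=0$ this Hom-space, hence its stable quotient, vanishes, giving $\Ext^{1}_{B}(X,M)=0$. For $X\in\X(\P)$ the argument dualizes the $\Y(\P)$-case of Lemma \ref{lem1-4}: given a short exact sequence $0\to M\to U\to X\to0$ in $\text{mod}B$, the torsion class $\X(\P)$ is closed under extensions so $U\in\X(\P)$, and we may write $U\cong\Hom_{D^{b}(A)}(\P,\Sigma U')$, $X\cong\Hom_{D^{b}(A)}(\P,\Sigma X')$ with $U',X'\in\F(\P)$. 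By Theorem \ref{th1-1}(1) and (3) this sequence is the image under the equivalence $\Hom_{D^{b}(A)}(\P,-)$ of a short exact sequence in $\C(\P)$, hence of a triangle $\Sigma\text{H}^{-1}(\nu\P)\to\Sigma U'\to\Sigma X'\to\Sigma^{2}\text{H}^{-1}(\nu\P)$ in $D^{b}(A)$; applying $\Sigma^{-1}$ yields a triangle $\text{H}^{-1}(\nu\P)\to U'\to X'\to\Sigma\text{H}^{-1}(\nu\P)$ classified by an element of $\Hom_{D^{b}(A)}(X',\Sigma\text{H}^{-1}(\nu\P))=\Ext^{1}_{A}(X',\text{H}^{-1}(\nu\P))$. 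Since $\text{H}^{-1}(\nu\P)$ is Ext-injective in $\F(\P)$ by Lemma \ref{lemma2.14}(2) and $X'\in\F(\P)$, this group is zero, so the triangle, and hence the original sequence, splits. Thus $\Ext^{1}_{B}(X,M)=0$ in all cases and $M$ is injective.

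For the displayed decomposition I would apply $\Hom_{D^{b}(A)}(\P,-)$ to the truncation triangle $\Sigma\text{H}^{-1}(\nu\P)\to\nu\P\to\text{H}^{0}(\nu\P)\to\Sigma^{2}\text{H}^{-1}(\nu\P)$ and invoke Lemma \ref{lem1-1}: the terms $\Hom_{D^{b}(A)}(\P,\Sigma^{-1}\text{H}^{0}(\nu\P))$ and $\Hom_{D^{b}(A)}(\P,\Sigma^{2}\text{H}^{-1}(\nu\P))$ vanish, so the long exact sequence collapses to a short exact sequence $0\to M\to\Hom_{D^{b}(A)}(\P,\nu\P)\to\Hom_{D^{b}(A)}(\P,\text{H}^{0}(\nu\P))\to0$ of right $B$-modules. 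The $i=0$ case of the chain of isomorphisms in the proof of Lemma \ref{lemm2-10} identifies $\Hom_{D^{b}(A)}(\P,\nu\P)$ with $D\Hom_{D^{b}(A)}(\P,\P)=DB$, and since $M$ is injective the sequence splits, which is exactly $DB\cong\Hom_{D^{b}(A)}(\P,\Sigma\text{H}^{-1}(\nu\P))\oplus\Hom_{D^{b}(A)}(\P,\text{H}^{0}(\nu\P))$.

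The step I expect to be most delicate is the case $X\in\X(\P)$: one must be careful that the triangle extracted from the short exact sequence in $\C(\P)$ really has the form $\text{H}^{-1}(\nu\P)\to U'\to X'\to\Sigma\text{H}^{-1}(\nu\P)$ — this uses Theorem \ref{th1-1}(1), identifying exact sequences in $\C(\P)$ with triangles in $D^{b}(A)$ whose vertices all lie in $\C(\P)$, together with the single shift — so that Ext-injectivity of $\text{H}^{-1}(\nu\P)$ in $\F(\P)$ can be applied. A minor but necessary point is to check that the Serre-duality identification $\Hom_{D^{b}(A)}(\P,\nu\P)\cong DB$ is an isomorphism of right $B$-modules and not merely of $k$-vector spaces.
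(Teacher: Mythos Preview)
Your proof is correct and follows essentially the same route as the paper's own argument: both split into the cases $X\in\Y(\P)$ and $X\in\X(\P)$, handle the first via the AR-formula and splitness of $(\X(\P),\Y(\P))$, handle the second by pulling the extension back to a short exact sequence in $\F(\P)$ and invoking Ext-injectivity of $\text{H}^{-1}(\nu\P)$, and then obtain the decomposition from the truncation triangle together with Lemma~\ref{lem1-1}. The only cosmetic difference is that in the $\Y(\P)$-case you use the variant $\Ext^{1}_{B}(X,M)\cong D\underline{\Hom}(\tau^{-1}M,X)$ and the closure of $\X(\P)$ under $\tau^{-1}$, whereas the paper uses $\Ext^{1}_{B}(X,M)\cong D\overline{\Hom}(M,\tau X)$ and the closure of $\Y(\P)$ under $\tau$; both are immediate consequences of splitness.
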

\begin{proof} It is easy to check that $DB=\Hom_{D^{b}(A)}(\P,\nu\P)$. Since $\text{H}^{-1}(\nu\P)\in\F(\P)$, we know that $\Hom_{D^{b}(A)}(\P,\Sigma\text{H}^{-1}(\nu\P))\in\X(\P)$. If $Y\in \X(\P)$, then,  we get the isomorphism $$\Ext^{1}_{B}(Y ,\Hom_{D^{b}(A)}(\P,\Sigma\text{H}^{-1}(\nu\P)))\cong D\overline{\Hom}_{B}(\Hom_{D^{b}(A)}(\P,\Sigma\text{H}^{-1}(\nu\P)),\tau Y)$$ by AR-formula. Since  ($\X(\P)$, $\Y(\P)$) is split and $Y\in \Y(\P)$, $\tau Y\in \Y(\P)$ and hence, $\Ext^{1}_{B}(\Y ,\Hom_{D^{b}(A)}(\P,\Sigma \text{H}^{-1}(\nu\P)))=0$. If $Y\in \X(\P)$, then there exists a right $A$-module $Y'\in \F(\P)$ such that $X\cong \Hom_{D^{b}(A)}(\P,\Sigma Y')$. For any short exact sequence
\begin{equation}\label{eq1-7}
  0\rightarrow \Hom_{D^{b}(A)}(\P,\Sigma\text{H}^{-1}(\nu\P))\rightarrow V\rightarrow \Hom_{D^{b}(A)}(\P,\Sigma Y')\rightarrow0
\end{equation}
in $\Ext^{1}_{B}(Y, \Hom_{D^{b}(A)}(\P,\Sigma \text{H}^{-1}(\nu\P)))$, $V\in \X(\P)$ since the first and third terms are in $\X(\P)$ and there exists $V'\in \F(\P)$ such that $V\cong \Hom_{D^{b}(A)}(\P,\Sigma V')$. It follows that there is an exact sequence
\begin{equation}\label{eq1-8}
  0\rightarrow \text{H}^{-1}(\nu\P)\rightarrow V'\rightarrow Y'\rightarrow0
\end{equation}
in $\F(\P)$. Note that $\text{H}^{-1}(\nu\P)$ is an $\Ext$-injective module in $\F(\P)$. Thus, the sequence (\ref{eq1-8}) splits. It yields that the sequence (\ref{eq1-7}) splits. Thus, we know that $\Hom_{D^{b}(A)}(\P,\Sigma \text{H}^{-1}(\nu\P))$ is injective in mod$B$.\par
Applying $\Hom_{D^{b}(A)}(\P,-)$ to the triangle
$$\Sigma\text{H}^{-1}(\nu\textbf{P})\rightarrow \nu\textbf{P}\rightarrow \text{H}^{0}(\nu\textbf{P})\rightarrow \Sigma^{2}\text{H}^{-1}(\nu\textbf{P}),$$
we get the following long exact sequence of right $B$-modules
\begin{align*}
  \cdots\rightarrow &\Hom_{D^{b}(A)}(\P,\Sigma^{-1}\text{H}^{0}(\nu\textbf{P}))\rightarrow\Hom_{D^{b}(A)}(\P,\Sigma\text{H}^{-1}(\nu\textbf{P}))\rightarrow \Hom_{D^{b}(A)}(\P,\nu\textbf{P}) \\
  \rightarrow & \Hom_{D^{b}(A)}(\P,\text{H}^{0}(\nu\textbf{P}))\rightarrow\Hom_{D^{b}(A)}(\P, \Sigma^{2}\text{H}^{-1}(\nu\textbf{P}))\rightarrow\cdots.
\end{align*}
By Lemma \ref{lem1-1}, we get a short exact sequence
\begin{align*}
  0\rightarrow\Hom_{D^{b}(A)}(\P,\Sigma\text{H}^{-1}(\nu\textbf{P}))\rightarrow \Hom_{D^{b}(A)}(\P,\nu\textbf{P})
  \rightarrow \Hom_{D^{b}(A)}(\P,\text{H}^{0}(\nu\textbf{P}))\rightarrow0.
\end{align*}
The result follows form that $\Hom_{D^{b}(A)}(\P,\text{H}^{-1}(\nu\textbf{P}))$ is injective.
\end{proof}
\begin{proposition}\label{prop2.12} Let $A$ be a finite dimensional algebra and  let $\P$ be a 2-term silting  complex in {\rm $K^{b}(\text{proj}A)$} such that {\rm Id$_{A}X\leq1$} for each $X\in\F(\P)$ and {\rm$B=\text{End}_{D^{b}(A)}(\P)$}. Then $\P$ is separating if and only if  for any $M\in \X(\P)$, {\rm Pd$_{B}M\leq1$}.
\end{proposition}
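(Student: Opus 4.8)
The plan is to transport the whole statement through the equivalence $H:=\Hom_{D^{b}(A)}(\P,-)\colon\C(\P)\xrightarrow{\sim}\text{mod}B$ of Theorem \ref{th1-1}(3). Under $H$ the subcategory $\add\P$ of $\C(\P)$ corresponds to $\text{proj}B$ (since $H(\P)=B$), so $\add\P$ is exactly the class of projective objects of $\C(\P)$; the torsion pair $(\X(\P),\Y(\P))$ corresponds to $(\Sigma\F(\P),\T(\P))$ of $\C(\P)$ by Theorem \ref{th1-1}(2); and by Theorem \ref{th1-1}(5) every object of $\X(\P)$ is isomorphic to $H(\Sigma Y)$ for some $Y\in\F(\P)$. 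Feeding the canonical sequence of the torsion pair $(\Sigma\F(\P),\T(\P))$ attached to $W\in\C(\P)$ into the cohomology long exact sequence shows it is the triangle $\Sigma\text{H}^{-1}(W)\to W\to\text{H}^{0}(W)\to\Sigma^{2}\text{H}^{-1}(W)$ with $\text{H}^{-1}(W)\in\F(\P)$ and $\text{H}^{0}(W)\in\T(\P)$. Since $H$ is an equivalence of abelian categories, $\Ext^{i}_{B}(H(U),H(V))\cong\Ext^{i}_{\C(\P)}(U,V)$; and since $\C(\P)$ is the heart of a bounded $t$-structure on $D^{b}(A)$, the natural maps $\Ext^{i}_{\C(\P)}(U,V)\to\Hom_{D^{b}(A)}(U,\Sigma^{i}V)$ are isomorphisms for $i\le1$ and a monomorphism for $i=2$. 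Hence, for $M=H(\Sigma Y)\in\X(\P)$, one has $\text{Pd}_{B}M\le1$ if and only if $\Ext^{2}_{\C(\P)}(\Sigma Y,W)=0$ for all $W\in\C(\P)$.

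For ``$\P$ separating $\Rightarrow\text{Pd}_{B}M\le1$'', fix $Y\in\F(\P)$ and $W\in\C(\P)$. The degree-$2$ monomorphism together with the shift isomorphism give an inclusion $\Ext^{2}_{\C(\P)}(\Sigma Y,W)\hookrightarrow\Hom_{D^{b}(A)}(\Sigma Y,\Sigma^{2}W)\cong\Hom_{D^{b}(A)}(Y,\Sigma W)$. Applying $\Hom_{D^{b}(A)}(Y,-)$ to the canonical triangle above yields the exact segment $\Ext^{2}_{A}(Y,\text{H}^{-1}(W))\to\Hom_{D^{b}(A)}(Y,\Sigma W)\to\Ext^{1}_{A}(Y,\text{H}^{0}(W))$. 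Its left term is zero because $\text{H}^{-1}(W)\in\F(\P)$ has injective dimension at most $1$ by hypothesis, and its right term is zero because $\P$ separating means $\Ext^{1}_{A}(\F(\P),\T(\P))=0$ while $Y\in\F(\P)$ and $\text{H}^{0}(W)\in\T(\P)$. Therefore $\Hom_{D^{b}(A)}(Y,\Sigma W)=0$, so $\Ext^{2}_{\C(\P)}(\Sigma Y,W)=0$ for every $W$, i.e. $\text{Pd}_{B}M\le1$.

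For the converse, suppose $\text{Pd}_{B}M\le1$ for every $M\in\X(\P)$; I will show $\Ext^{1}_{A}(Y,X)=0$ for all $Y\in\F(\P)$ and $X\in\T(\P)$, which is precisely the splitting of $(\T(\P),\F(\P))$. As $H(\Sigma Y)\in\X(\P)$ has projective dimension at most $1$, applying a quasi-inverse of $H$ to a projective resolution of it and invoking Theorem \ref{th1-1}(1) produces a triangle $C\to\P_{0}\to\Sigma Y\to\Sigma C$ in $D^{b}(A)$ with $\P_{0},C\in\add\P$, equivalently a triangle $Y\to C\to\P_{0}\to\Sigma Y$. Apply $\Hom_{D^{b}(A)}(-,\Sigma X)$: since $X\in\T(\P)$ we have $\Hom_{D^{b}(A)}(\P_{0},\Sigma X)=0$ by the definition of $\T(\P)$, and $\Hom_{D^{b}(A)}(\P_{0},\Sigma^{2}X)=0$ by Lemma \ref{lem1-1}, so the long exact sequence collapses to $\Hom_{D^{b}(A)}(C,\Sigma X)\cong\Hom_{D^{b}(A)}(Y,\Sigma X)\cong\Ext^{1}_{A}(Y,X)$. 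But $C\in\add\P$ forces $\Hom_{D^{b}(A)}(C,\Sigma X)=0$, again by the definition of $\T(\P)$. Hence $\Ext^{1}_{A}(Y,X)=0$ and $\P$ is separating; note that this direction does not use the injective-dimension hypothesis.

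The routine parts are the identifications of the first paragraph: that $\add\P$ is the class of projectives of $\C(\P)$, that the torsion pair $(\Sigma\F(\P),\T(\P))$ has the stated canonical triangle, and that $\X(\P)=H(\Sigma\F(\P))$ — all read off from Theorem \ref{th1-1} and a cohomology long exact sequence. I expect the main obstacle to be the ``only if'' direction: a naive dimension shift replacing $\Ext^{2}_{\C(\P)}(\Sigma Y,W)$ by $\Hom_{D^{b}(A)}(C,\Sigma W)$ (with $C$ the syzygy of $\Sigma Y$ in $\C(\P)$) only bounds it by $\Ext^{1}_{A}(\text{H}^{-1}(C),\text{H}^{-1}(W))$, an $\Ext^{1}$ between two objects of $\F(\P)$ that need not vanish; one must instead use the degree-$2$ comparison map and run the computation through $\Hom_{D^{b}(A)}(\Sigma Y,\Sigma^{2}W)=\Hom_{D^{b}(A)}(Y,\Sigma W)$, which is exactly where the hypotheses $\text{Id}_{A}(\F(\P))\le1$ and ``$\P$ separating'' annihilate the two ends of the canonical triangle of $W$.
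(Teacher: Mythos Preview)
Your proof is correct and takes a genuinely different route from the paper's.

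For the necessity, the paper first observes that the hypothesis $\text{Id}_A\F(\P)\le1$ forces $\P$ to be splitting, so (combined with separating) $\P$ is tilting; it then brings in the induced silting complex $\Q$ over $B$, the resulting derived equivalence, and a case split according to whether the test module $N$ lies in $\T(\Q)$ or $\F(\Q)$, transporting $\Ext^2_B(M,N)$ back to either an $\Ext^1_A$ (killed by separating) or an $\Ext^2_A$ (killed by the Id hypothesis). You bypass $\Q$ entirely: you stay in $\C(\P)$, use the single canonical triangle of $W$ to control $\Hom_{D^b(A)}(Y,\Sigma W)$, and kill its two ends by exactly the same two hypotheses, with no case split. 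For sufficiency, the paper writes down an isomorphism chain ending in $\Ext^2_B(\H(\Sigma X),\H(Y))$ and reads off the vanishing; you instead pull the length-one $B$-projective resolution back to an $\add\P$-resolution of $\Sigma Y$ in $\C(\P)$, rotate the triangle, and kill $\Ext^1_A(Y,X)$ directly. Your sufficiency argument is cleaner in that it makes explicit that this direction needs neither the Id hypothesis nor any derived-equivalence identification.

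One small point deserves tightening. The assertion that $\Ext^2_{\C(\P)}(U,V)\to\Hom_{D^b(A)}(U,\Sigma^2V)$ is a monomorphism ``since $\C(\P)$ is the heart of a bounded $t$-structure'' is not a general fact about hearts; beyond degree $1$ the comparison map need be neither injective nor surjective. It does hold here, and is even an isomorphism, because $\C(\P)$ has enough projectives, namely $\add\P$, and these satisfy $\Hom_{D^b(A)}(\P_0,\Sigma^iW)=0$ for all $i\ne0$ and $W\in\C(\P)$: taking the $\C(\P)$-syzygy $C$ of $\Sigma Y$ gives $\Ext^2_{\C(\P)}(\Sigma Y,W)\cong\Ext^1_{\C(\P)}(C,W)\cong\Hom_{D^b(A)}(C,\Sigma W)$, and the triangle $C\to\P_0\to\Sigma Y\to\Sigma C$ together with that vanishing yields $\Hom_{D^b(A)}(C,\Sigma W)\cong\Hom_{D^b(A)}(\Sigma Y,\Sigma^2W)$. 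In other words, your ``naive dimension shift'' actually works and lands you exactly at $\Hom_{D^b(A)}(Y,\Sigma W)$; the detour through the degree-$2$ comparison map is unnecessary. (Alternatively: in this direction $\P$ is separating, hence tilting, so the derived equivalence $D^b(A)\simeq D^b(B)$ makes the comparison maps isomorphisms in all degrees anyway.)
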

\begin{proof}
For the necessity, it is enough to prove that Pd$_{B}M\leq1$, for any module $M\in\T(\Q)$.\par
By the assumption, we know that $\P$ is a splitting silting complex. Since $\P$ is a separating silting complex, then $\P$ is a tilting complex and hence, $\Q$ is a tilting complex. Then $A\cong \text{End}_{D^{b}(B)}(\Q)$. Moreover, the torsion pairs ($\T(\P)$, $\F(\P)$) and ($\T(\Q)$, $\F(\Q)$) are split in mod$A$ and mod$B$, respectively.\par
Since ($\T(\Q)$, $\F(\Q)$) is split, for any indecomposable $N\in$ mod$B$, $N\in\T(\Q)$ or $N\in\F(\Q)$. If $N\in\F(\Q)$, then
{\rm\begin{align*}
  \Ext^{2}_{B}(M,N) &\cong \Hom_{D^{b}(B)}(M,\Sigma^{2}N) \\
   &= \Hom_{D^{b}(B)}(M,\Sigma (\Sigma N))~~~~(\text{by Theorem \ref{th1-1}(1)})\\
   &\cong\Ext^{1}_{A}(\Hom_{D^{b}(B)}(\Q,M),\Hom_{D^{b}(B)}(\Q,\Sigma N))~~~~(\text{by Theorem \ref{th1-1}(2)}).
\end{align*}}
Note that $\Hom_{D^{b}(B)}(\Q,\Sigma N)\in \T(\P)$ and $\Hom_{D^{b}(B)}(\Q,M)\in\F(\P)$. Since ($\T(\P)$, $\F(\P)$) is split, $\Ext^{1}_{A}(\Hom_{D^{b}(B)}(\Q,M),\Hom_{D^{b}(B)}(\Q,\Sigma N))=0$. Thus, $\Ext^{2}_{B}(M,N)=0$. If $N\in \T(\Q)$, then
{\rm\begin{align*}
  \Ext^{2}_{B}(M,N) &\cong \Hom_{D^{b}(B)}(M,\Sigma^{2}N) \\
   &\cong\Hom_{D^{b}(A)}(\Hom_{D^{b}(B)}(\Q,M),\Sigma^{2}\Hom_{D^{b}(B)}(\Q, N))\\
   &\cong\Ext^{2}_{A}(\Hom_{D^{b}(B)}(\Q,M),\Hom_{D^{b}(B)}(\Q, N)).
\end{align*}}
Since $\Hom_{D^{b}(B)}(\Q, N)\in\F(\P)$,  by the assumption on $\F(\P)$, we have that $$\Ext^{2}_{A}(\Hom_{D^{b}(B)}(\Q,M),\Hom_{D^{b}(B)}(\Q, N))=0$$ and hence $\Ext^{2}_{B}(M,N)=0$. Then the claim holds.\par
For the sufficiency, assume that $X\in\F(\P)$ and $Y\in\T(\P)$. It suffices to show that $\Ext_{A}^{1}(X,Y)=0$. Indeed, we have the following isomorphisms
\begin{align*}
  \Ext_{A}^{1}(X,Y) & \cong \Hom_{D^{b}(A)}( X,\Sigma Y)\\
  & \cong\Hom_{D^{b}(A)}( \Sigma X,\Sigma^{2} Y)\\
  & \cong\Hom_{D^{b}(B)}( \Hom_{D^{b}(A)}(\P,\Sigma X),\Sigma^{2}  \Hom_{D^{b}(A)}(\P,Y))\\
  &\cong\Ext_{B}^{2}(\Hom_{D^{b}(A)}(\P,\Sigma X), \Hom_{D^{b}(A)}(\P,Y)).
\end{align*}
Note that $\Hom_{D^{b}(A)}(\P,\Sigma X)\in\X(\P)$. By the assumption, we know that the projective dimension of  $\Hom_{D^{b}(A)}(\P,\Sigma X)$ at most 1. It implies that $\Ext_{A}^{1}(X,Y)=0$.
\end{proof}
Dually, one can prove the following result.
\begin{proposition}\label{proposition 2.10} Let $A$ be a finite dimensional algebra and  let $\P$ be a 2-term silting  complex in {\rm $K^{b}(\text{proj}A)$} such that {\rm Pd$_{A}X\leq1$} for each $X\in\T(\P)$ and {\rm$B=\text{End}_{D^{b}(A)}(\P)$}. Then $\P$ is separating if and only if  for any $N\in \Y(\P)$, {\rm id$_{B}N\leq1$}.
\end{proposition}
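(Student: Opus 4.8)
The plan is to mirror the proof of Proposition \ref{prop2.12}, swapping the roles of projective and injective dimension and of the two induced torsion pairs. First I would observe, exactly as there, that the hypothesis $\text{Pd}_{A}X\leq 1$ for all $X\in\T(\P)$ forces $\Ext^{2}_{A}(\T(\P),\F(\P))=0$, so $\P$ is splitting; adding that $\P$ is separating makes $\P$, and then $\Q$, a tilting complex, gives $A\cong\text{End}_{D^{b}(B)}(\Q)$ together with the equivalence $\Hom_{D^{b}(B)}(\Q,-)\colon\C(\Q)\to\text{mod}A$, the identity $(\X(\Q),\Y(\Q))=(\T(\P),\F(\P))$, and the splitness of both torsion pairs $(\T(\P),\F(\P))$ and $(\T(\Q),\F(\Q))$.

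For the necessity, since $\Y(\P)=\F(\Q)$ it is enough to prove $\text{id}_{B}N\leq 1$ for each $N\in\F(\Q)$, that is $\Ext^{2}_{B}(M,N)=0$ for every indecomposable $M$; by splitness of $(\T(\Q),\F(\Q))$ I would split this into $M\in\T(\Q)$ and $M\in\F(\Q)$. The first case is literally the computation carried out in the necessity part of Proposition \ref{prop2.12}: one gets $\Ext^{2}_{B}(M,N)\cong\Ext^{1}_{A}(\Hom_{D^{b}(B)}(\Q,M),\Hom_{D^{b}(B)}(\Q,\Sigma N))$ with $\Hom_{D^{b}(B)}(\Q,M)\in\F(\P)$ and $\Hom_{D^{b}(B)}(\Q,\Sigma N)\in\T(\P)$, hence $0$ by splitness of $(\T(\P),\F(\P))$. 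In the second case I would shift by $\Sigma$ so that $\Sigma M,\Sigma N$ lie in $\C(\Q)$ and apply Theorem \ref{th1-1}(1),(3) to obtain
$$\Ext^{2}_{B}(M,N)\cong\Ext^{2}_{A}\big(\Hom_{D^{b}(B)}(\Q,\Sigma M),\,\Hom_{D^{b}(B)}(\Q,\Sigma N)\big),$$
which vanishes because $\Hom_{D^{b}(B)}(\Q,\Sigma M)\in\X(\Q)=\T(\P)$ has projective dimension at most $1$ by hypothesis. Thus $\text{id}_{B}N\leq 1$.

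For the sufficiency, I would fix $X\in\F(\P)$ and $Y\in\T(\P)$ and show $\Ext^{1}_{A}(X,Y)=0$, which is exactly the splitting of $(\T(\P),\F(\P))$. Running the same chain of isomorphisms as in the sufficiency part of Proposition \ref{prop2.12} (using $\Sigma X\in\Sigma\F(\P)\subseteq\C(\P)$ and $Y\in\T(\P)\subseteq\C(\P)$) gives
$$\Ext^{1}_{A}(X,Y)\cong\Ext^{2}_{B}\big(\Hom_{D^{b}(A)}(\P,\Sigma X),\,\Hom_{D^{b}(A)}(\P,Y)\big),$$
and now the right-hand side is zero since $\Hom_{D^{b}(A)}(\P,Y)\in\Y(\P)$ has injective dimension at most $1$ by hypothesis — where in Proposition \ref{prop2.12} one instead killed it using $\Hom_{D^{b}(A)}(\P,\Sigma X)\in\X(\P)$ of projective dimension at most $1$.

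Since this is a formal dualization, I do not expect a genuinely new obstacle; the one point that must be treated with the same care as in Proposition \ref{prop2.12} is the translation between $\Ext$-groups in the abelian categories $\C(\P),\C(\Q),\text{mod}A,\text{mod}B$ and $\Hom$-groups of shifted objects in $D^{b}(A)$ and $D^{b}(B)$ — in particular checking that every object involved lies in the appropriate heart and that the $\Ext^{2}$-level identifications (which rely on $\P$, hence $\Q$, being tilting in the necessity direction) are legitimate. Everything else is bookkeeping.
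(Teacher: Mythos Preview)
Your proposal is correct and is exactly what the paper intends: the paper gives no explicit argument for Proposition~\ref{proposition 2.10} beyond the sentence ``Dually, one can prove the following result,'' and your write-up is precisely the dualization of the proof of Proposition~\ref{prop2.12}, with the two cases in the necessity and the $\Ext^{2}$-killing step in the sufficiency swapped from the projective side to the injective side as required.
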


In what follows, for convenience, we denote by $\H(-)$ the functor $\Hom_{D^{b}(A)}(\P,-)$ and by $\E(-)$ the functor $\Hom_{D^{b}(A)}(\P,\Sigma-)$.
\begin{lemma} \label{lemma2.13} Let $A$ be a finite dimensional algebra and  let $\P$ be a 2-term separating silting  complex in {\rm $K^{b}(\text{proj}A)$} such that {\rm Id$_{A}X\leq1$} for each $X\in\F(\P)$ and {\rm$B=\text{End}_{D^{b}(A)}(\P)$}.  Suppose that $M$ is a generator of {\rm mod$A$} and {\rm$N=B\oplus\H(M)\oplus Y$} with $Y\in\X(\P)$. Then
{\rm$
  \text{Pd}_{\text{End}_{B}(N)}\Hom_{B}(N,\H(U))\leq \text{Pd}_{\text{End}_{A}(M)}\Hom_{B}(M,U)
$}
for any $U\in\T(\P)$.
\end{lemma}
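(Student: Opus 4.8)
The plan is to transport a suitable ``approximation resolution'' of $U$ in $\mathrm{mod}A$ across the equivalence $\H(-):\C(\P)\to\mathrm{mod}B$ and then apply the standard characterization of projective dimension over an endomorphism algebra. Concretely, set $\Gamma=\mathrm{End}_A(M)$ and $\Lambda=\mathrm{End}_B(N)$, and write $d=\mathrm{Pd}_{\Gamma}\Hom_A(M,U)$. By the Auslander-type description (the functor $\Hom_A(M,-)$ sends an $\mathrm{add}M$-resolution that stays exact under $\Hom_A(M,-)$ to a projective resolution over $\Gamma$), there is an exact sequence
\begin{equation*}
0\rightarrow M_d\rightarrow M_{d-1}\rightarrow\cdots\rightarrow M_0\rightarrow U\rightarrow 0
\end{equation*}
with each $M_i\in\mathrm{add}M$ and with $\Hom_A(M,-)$ preserving its exactness. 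First I would show this sequence lives inside $\T(\P)$: since $\P$ is separating, $(\T(\P),\F(\P))$ is split, so every $M_i$ decomposes into a $\T(\P)$-part and an $\F(\P)$-part, and using $\mathrm{Id}_AX\leq 1$ for $X\in\F(\P)$ together with $U\in\T(\P)$ one argues that the $\F(\P)$-summands contribute nothing to a minimal such resolution — more carefully, one replaces $M$ by its $\T(\P)$-generator part where needed, or one notes that applying the torsion radical termwise keeps the sequence exact because $\F(\P)$ has injective dimension $\leq 1$ and $U$ is torsion. Thus all syzygies are in $\T(\P)\subseteq\C(\P)$.

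Next I would apply the exact equivalence $\H(-)$ from Theorem~\ref{th1-1}(3): it carries the above sequence to an exact sequence
\begin{equation*}
0\rightarrow \H(M_d)\rightarrow\cdots\rightarrow\H(M_0)\rightarrow\H(U)\rightarrow 0
\end{equation*}
in $\mathrm{mod}B$ with all $\H(M_i)\in\mathrm{add}\H(M)\subseteq\mathrm{add}N$. The key point is that this sequence remains $\Hom_B(N,-)$-exact: applying $\Hom_B(N,-)$ means applying $\Hom_B(B,-)\oplus\Hom_B(\H(M),-)\oplus\Hom_B(Y,-)$. On the $B$-summand it is just the forgetful functor, which keeps the sequence exact. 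On the $\H(M)$-summand, $\Hom_B(\H(M),\H(-))\cong\Hom_A(M,-)$ on $\C(\P)$ via the equivalence, so exactness here is exactly the hypothesis that the original sequence was $\Hom_A(M,-)$-exact. On the $Y$-summand, since $Y\in\X(\P)=\T(\Q)$ and Proposition~\ref{prop2.12} gives $\mathrm{Pd}_BY\leq 1$, while all the $\H(M_i)$ and $\H(U)$ lie in $\Y(\P)=\F(\Q)$; because the torsion pair $(\T(\Q),\F(\Q))$ is split, $\Ext^{\geq 1}_B(Y,-)$ vanishes on $\F(\Q)$, so $\Hom_B(Y,-)$ is exact on our sequence. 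Hence $\Hom_B(N,-)$ preserves exactness, and by Lemma~\ref{lemma2.0}-type reasoning (the ``half'' of it computing projective dimension, not just the bound $n+2$) we get a projective resolution of $\Hom_B(N,\H(U))$ over $\Lambda=\mathrm{End}_B(N)$ of length $d$, i.e. $\mathrm{Pd}_{\Lambda}\Hom_B(N,\H(U))\leq d$.

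The main obstacle I anticipate is the bookkeeping in the first step: ensuring that an $\mathrm{add}M$-resolution of $U$ that is $\Hom_A(M,-)$-exact can be chosen with \emph{all} terms in $\T(\P)$, so that the equivalence $\H(-)$ applies termwise. This is where both separating-ness (splitness of $(\T(\P),\F(\P))$, so summands can be sorted) and the hypothesis $\mathrm{Id}_AX\leq 1$ on $\F(\P)$ are genuinely used — the latter forces the $\F(\P)$-parts of the higher syzygies to split off without raising the length. A secondary technical point is checking that $\mathrm{add}\H(M)$ together with the extra summands $B$ and $Y$ makes $N$ behave well, i.e. that no new relations among the $\Hom_B(N,\H(M_i))$ are introduced by the $B$- and $Y$-components; this is handled exactly by the vanishing of $\Ext^1_B(Y,\Y(\P))$ and $\Ext^1_B(B,-)$ noted above. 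Once these are in place, the inequality on projective dimensions is immediate, and the argument is essentially a ``resolution transport'' along the tilting-type equivalence.
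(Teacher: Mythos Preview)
There is a genuine gap in your first step, precisely at the ``main obstacle'' you flagged: the claim that an $\mathrm{add}\,M$-resolution of $U\in\T(\P)$ can be arranged to lie entirely in $\T(\P)$ is false in general, and neither of your proposed fixes works.

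Take the algebra and silting complex of Example~\ref{ex4.3}: $A=k(3\to 2\to 1)$, with $\T(\P)=\{S_2,\begin{smallmatrix}3\\2\end{smallmatrix},S_3\}$ and $\F(\P)=\{P_1,P_2,P_3\}$; this $\P$ is separating and, since $A$ is hereditary, $\mathrm{Id}_AX\le 1$ for every $X$. Let $M=A$, a generator. For $U=S_2\in\T(\P)$ the \emph{unique} minimal $\mathrm{add}\,M$-resolution is the projective resolution
\[
0\longrightarrow P_1\longrightarrow P_2\longrightarrow S_2\longrightarrow 0,
\]
and both $P_1,P_2$ lie in $\F(\P)$, not $\T(\P)$. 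Applying $\H$ termwise yields $0\to 0\to 0\to\H(S_2)$, which is not exact; applying the torsion radical yields the same zero sequence. So the $\F(\P)$-summands are essential to the approximation, and the torsion functor does not preserve exactness here despite the $\mathrm{Id}\le 1$ hypothesis. Note also that $\H(M)=\H(M_\T)=0$ in this example, so the resolution you eventually need over $\Lambda$ must come entirely from the summand $B$ of $N$: your argument never produces those terms.

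The paper's proof does not attempt to keep the resolution inside $\T(\P)$. Instead it applies $\H$ to each short exact sequence $0\to K_{i+1}\to M_i\to K_i\to 0$ to obtain the six-term sequence, and observes that the cokernel $\Omega_i=\mathrm{Coker}\bigl(\H(M_i)\to\H(K_i)\bigr)$ sits in
\[
0\longrightarrow\Omega_i\longrightarrow\E(K_{i+1})\longrightarrow\E(M_i)\longrightarrow\E(K_i)\longrightarrow 0
\]
with all $\E$-terms in $\X(\P)$. Proposition~\ref{prop2.12} (this is where $\mathrm{Id}\le 1$ is actually used) gives $\mathrm{Pd}_B$ of each $\E$-term at most $1$, hence $\mathrm{Pd}_B\Omega_i\le 1$. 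Choosing a projective resolution $0\to B_i^1\to B_i^0\to\Omega_i\to 0$ and a short diagram chase then yields exact sequences
\[
0\longrightarrow \H(K_{i+1})\oplus B_i^1\longrightarrow \H(M_i)\oplus B_i^0\longrightarrow \H(K_i)\longrightarrow 0
\]
in $\mathrm{mod}\,B$. The added projective summands lie in $\mathrm{add}\,B\subseteq\mathrm{add}\,N$, and splicing gives an $\mathrm{add}\,N$-resolution of $\H(U)$ of length $\le n$ that one then checks is $\Hom_B(N,-)$-exact. In the toy example above this construction produces exactly a length-one projective $B$-resolution of $\H(S_2)$, as it must.

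A smaller point: your justification for the $Y$-summand is not quite right. Splitting of $(\T(\Q),\F(\Q))$ gives $\Ext^1_B(\F(\Q),\T(\Q))=0$, not $\Ext^1_B(\T(\Q),\F(\Q))=0$. The correct (and simpler) reason $\Hom_B(Y,-)$ causes no trouble is that $Y\in\T(\Q)$ and all targets $\H(K_i)$ lie in $\F(\Q)$, so $\Hom_B(Y,\H(K_i))=0$ and the required surjectivity is vacuous.
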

\begin{proof} If $\text{Pd}_{\text{End}_{A}(M)}\Hom_{B}(M,U)=\infty$, then the result holds. Otherwise, we assume that $\text{Pd}_{\text{End}_{A}(M)}\Hom_{B}(M,U)=n<\infty$. Since $M$ is a generator of {\rm mod$A$}, there is an add$M$-resolution of $U$, that is, there is a long exact sequence
$$0\rightarrow M_{n}\xrightarrow{f_{n}} \cdots\rightarrow M_{1}\xrightarrow{f_{1}} M_{0}\xrightarrow{f_{0}} U\rightarrow0$$
such that it keeps exact after applying $\Hom_{A}(M',-)$ with $M'\in$ add$M$.  Then there are a family of short each short exact sequences $$0\rightarrow K_{i+1}\xrightarrow{\eta_{i}} M_{i}\xrightarrow{\overline{f_{i}}} K_{i}\rightarrow0$$ for $i=0,1,\cdots,n-1$, where $K_{0}=U$, $K_{n}=M_{n}$, $K_{j}=\Ker f_{j}$ for $j=1,2,\cdots,n-1$ and $\overline{f_{i}}$ are right add$M$-approximations.
Applying the functor $\H$ to these sequences, we obtain a long exact sequence
$$0\rightarrow \H(K_{i+1})\xrightarrow{\H(\eta_{i})} \H(M_{i})\xrightarrow{\H(\overline{f_{i}})} \H(K_{i})\rightarrow\E(K_{i+1})\rightarrow\E(M_{i})\rightarrow \E(K_{i})\rightarrow0$$
Set $\Omega_{i}=\text{Coker}\H(\overline{f_{i}})$. Then we get a long exact sequence
$$0\rightarrow\Omega_{i}\rightarrow\E(K_{i+1})\rightarrow\E(M_{i})\rightarrow \E(K_{i})\rightarrow0$$
Note that $\E(K_{i+1}),\E(M_{i}), \E(K_{i})$ are in $\X(\P)$. Then by Proposition \ref{prop2.12}, we know that the projective dimensions of $\E(K_{i+1}),\E(M_{i}), \E(K_{i})$ are at most 1. Thus, Pd$_{B}\Omega_{i}\leq1$. Assume that $0\rightarrow B_{i}^{1}\xrightarrow{\beta_{i}} B_{i}^{0}\xrightarrow{\alpha_{i}} \Omega_{i}\rightarrow0$ is the projective resolution of $\Omega_{i}$ with $B_i^{j}\in$ add$B$. Then we obtain the following commutative diagram
$$\xymatrix{&0\ar[d]&0\ar[d]& 0\ar[d]& \\
0\ar[r]&\H(K_{i+1})\ar[d]_{\H(\eta_{i})}\ar[r]^-{\left[
              \begin{smallmatrix}
                1\\0
              \end{smallmatrix}
            \right]}&\H(K_{i+1})\oplus B_{i}^{1}\ar[d]^-{\left[
              \begin{smallmatrix}
                \H(\eta_{i})&0\\
                0&\beta_{i}
              \end{smallmatrix}
            \right]}\ar[r]^-{\left[
              \begin{smallmatrix}
                0~1
              \end{smallmatrix}
            \right]}&B_{i}^{1}\ar[r]\ar[d]^{\beta_{i}}&0  \\
           0\ar[r] &\H(M_{i})\ar[d]_{\overline{\H(\overline{f_{i}})}}\ar[r]^-{\left[
              \begin{smallmatrix}
                1\\0
              \end{smallmatrix}
            \right]}&\H(M_{i})\oplus B_{i}^{0}\ar[d]^-{\left[
              \begin{smallmatrix}
                \H(\overline{f_{i}})&\mu_{i}
              \end{smallmatrix}
            \right]}\ar[r]^-{\left[
              \begin{smallmatrix}
                0~1
              \end{smallmatrix}
            \right]}&B_{i}^{0}\ar[r]\ar[d]^{\alpha_{i}}& 0  \\
           0\ar[r]  &\Im\H(\overline{f_{i}}) \ar[r]^{\eta}\ar[d]&\H(K_{i})\ar[d]\ar[r]^{\pi}&\Omega_{i}\ar[r]\ar[d]&0\\
           &0&0&0&}$$
where the existence of the morphism $\mu_{i}$ is from the projectiveness of $B_{i}^{0}$. Let $\sigma_{i}=\left[
              \begin{smallmatrix}
                \H(\overline{f_{i}})&\mu_{i}
              \end{smallmatrix}
            \right]$. Then we have a short exact sequence
            $$0\rightarrow\H(K_{i+1})\oplus B_{i}^{1}\rightarrow \H(M_{i})\oplus B_{i}^{0}\xrightarrow{\sigma_{i}}\H(K_{i})\rightarrow0.$$\par
            Next, we claim that the above sequence keeps exact after applying $\Hom_{B}(N',-)$ for any $N'\in$ add$N$. It suffices to show that the induced map
            $$\Hom_{B}(N',\H(M_{i})\oplus B_{i}^{0})\rightarrow \Hom_{B}(N',\H(K_{i}))$$
            is surjective for $N'\in$ add$Y$ or $N'\in$ add$\H(M)$. If $N'\in$  add$Y$, then we know that $\Hom_{B}(N',\H(K_{i}))=0$ since $N'\in\X(\P)$ and $\H(K_{i})\in \Y(\P)$. Now assume $N'\in$ add$\H(M)$ and $N'$ is a nonzero object. Then there is a module $M'\in$ add$M\bigcap \T(\P)$ such that $N'= \H(M')$. Then we have the following isomorphisms
            \begin{align*}
              \Hom_{B}(N',\H(K_{i})) & =
            \Hom_{B}(\H(M'),\H(K_{i})) \\
               & \cong\Hom_{B}(\H(M'),\H(tK_{i}))\\
               &\cong\Hom_{B}(M',tK_{i})\\
               &\cong\Hom_{B}(M',K_{i})
            \end{align*}
            Thus, for any $g\in \Hom_{B}(N',\H(K_{i}))$, there is a morphism $g':M'\rightarrow K_{i}$ such that $g=\H(g')$.
            Since $\overline{f_{i}}$ is a right add$M$-approximation,   there is a morphism $h:M'\rightarrow M_{i}$ such that $\overline{f_{i}}h=g'$. Then we have the following commutative diagram
            $$\xymatrix{
              &&\H(M')\ar[d]^{\H(g')}\ar[1,-2]_{{\tiny\left[
              \begin{smallmatrix}
                \H(h)\\
                0
              \end{smallmatrix}
            \right]}}\\
             \H(M_{i})\oplus B_{i}^{0}\ar[rr]^-{{\tiny\left[
              \begin{smallmatrix}
                \H(\overline{f_{i}})&\mu_{i}
              \end{smallmatrix}
            \right]}}&&  \H(K_{i})      }.$$
Then the claim holds.\par
 Now, we can construct a long exact sequence
 $$0\rightarrow N_{n}\rightarrow\cdots\rightarrow N_{1}\rightarrow N_{0}\rightarrow \H(U)\rightarrow0$$
 such that it keeps exact after applying $\Hom_{B}(N,-)$, where $N_{n}= \H(M_{n})\oplus B_{n-1}^{1}$, $N_{0}=\H(M_{0})\oplus B_{0}^{0}$ and $N_{i}=\H(M_{i})\oplus B^{0}_{i}\oplus  B^{1}_{i-1}$ for $i=1,2,\cdots,n-1$. Therefore, $\text{Pd}_{\text{End}_{B}(N)}\Hom_{B}(N,\H(U))\leq n$.
\end{proof}
Dually, we have the following lemma.
\begin{lemma}\label{lemma2.12}  Let $A$ be a finite dimensional algebra and  let $\P$ be a 2-term separating silting  complex in {\rm $K^{b}(\text{proj}A)$} such that {\rm Pd$_{A}X\leq1$} for each $X\in\T(\P)$ and {\rm$B=\text{End}_{D^{b}(A)}(\P)$}.  Suppose that $M$ is a cogenerator of {\rm mod$A$} and {\rm$N=DB\oplus\E(M)\oplus Y$} with $Y\in\F(\P)$.  Let {\rm$\Lambda=\text{End}_{B}(N)^{op}$} and {\rm$\Gamma=\text{End}_{A}(M)^{op}$}. Then
{\rm$
  \text{Pd}_{\Lambda}\Hom_{B}(\E(U),N)\leq \text{Pd}_{\Gamma}\Hom_{B}(U,M)
$}
for any $U\in\F(\P)$.
\end{lemma}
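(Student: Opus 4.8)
The plan is to transpose the proof of Lemma \ref{lemma2.13} to the opposite situation, with $\E=\Hom_{D^b(A)}(\P,\Sigma-)$ and $\mathrm{add}M$-coresolutions playing the roles of $\H=\Hom_{D^b(A)}(\P,-)$ and $\mathrm{add}M$-resolutions, and with Proposition \ref{proposition 2.10} in place of Proposition \ref{prop2.12}. I may assume $\mathrm{Pd}_\Gamma\Hom_B(U,M)=n<\infty$. Since $M$ is a cogenerator of $\mathrm{mod}A$, the module $U$ admits an $\mathrm{add}M$-coresolution $0\to U\to M^0\to M^1\to\cdots\to M^n\to0$ of length $n$ that stays exact under $\Hom_A(-,M')$ for every $M'\in\mathrm{add}M$ (the dual of the fact used at the start of the proof of Lemma \ref{lemma2.13}). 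I break it into short exact sequences $0\to C^i\xrightarrow{g^i}M^i\to C^{i+1}\to0$ for $0\le i\le n-1$, with $C^0=U$, $C^n=M^n$, and each $g^i$ a left $\mathrm{add}M$-approximation.

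Next I apply $\E$ to these triangles; by Lemma \ref{lem1-1} this gives exact sequences
$$0\to\H(C^i)\to\H(M^i)\to\H(C^{i+1})\to\E(C^i)\to\E(M^i)\to\E(C^{i+1})\to0.$$
Set $\Theta_i=\mathrm{Coker}(\H(M^i)\to\H(C^{i+1}))$, so that also $0\to\Theta_i\to\E(C^i)\to\E(M^i)\to\E(C^{i+1})\to0$ is exact. As $\H$ sends every $A$-module into $\Y(\P)$ (it kills the torsion-free part of any module and $\H(\T(\P))=\Y(\P)$), Proposition \ref{proposition 2.10} gives $\mathrm{id}_B\le1$ for each of $\H(C^i),\H(M^i),\H(C^{i+1})$; splitting the $4$-term exact sequence $0\to\H(C^i)\to\H(M^i)\to\H(C^{i+1})\to\Theta_i\to0$ into two short exact sequences and shifting dimensions then yields $\mathrm{id}_B\Theta_i\le1$. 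This is the exact dual of the deduction $\mathrm{Pd}_B\Omega_i\le1$ in Lemma \ref{lemma2.13}; the difference is that $\Theta_i$ need not itself lie in $\Y(\P)$, so one must argue through the $4$-term sequence rather than through a subcategory membership.

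Then I choose an injective copresentation $0\to\Theta_i\to I_i^0\to I_i^1\to0$ with $I_i^0,I_i^1\in\mathrm{add}(DB)$, extend $\Theta_i\hookrightarrow I_i^0$ along $\Theta_i\hookrightarrow\E(C^i)$ using injectivity of $I_i^0$, and run a $3\times3$ diagram dual to the one in the proof of Lemma \ref{lemma2.13} to obtain short exact sequences $0\to\E(C^i)\to\E(M^i)\oplus I_i^0\to\E(C^{i+1})\oplus I_i^1\to0$. I then have to check that each of these stays exact after applying $\Hom_B(-,N')$ for $N'\in\mathrm{add}N$: for $N'\in\mathrm{add}(DB)$ it is automatic; for $N'\in\mathrm{add}Y$ with $Y\in\Y(\P)$ the target $\Hom_B(\E(C^i),Y)$ vanishes because $\E(C^i)\in\X(\P)$ and there are no nonzero maps from the torsion class to the torsion-free class; and for $N'\in\mathrm{add}\E(M)$ — which, since $\P$ is separating so that $(\T(\P),\F(\P))$ splits, means $N'=\E(M')$ for some $M'\in\mathrm{add}M\cap\F(\P)$ — I use the quasi-inverse equivalence $\E:\F(\P)\to\X(\P)$ of Theorem \ref{th1-1} to identify $\Hom_B(\E(M^i),\E(M'))\to\Hom_B(\E(C^i),\E(M'))$ with $\Hom_A(M^i,M')\to\Hom_A(C^i,M')$, which is onto because $g^i$ is a left $\mathrm{add}M$-approximation.

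Finally I splice these short exact sequences, carrying the summand $I_{i-1}^1$ forward at each step, to get an $\mathrm{add}N$-coresolution $0\to\E(U)\to N^0\to\cdots\to N^n\to0$ with $N^0=\E(M^0)\oplus I_0^0$, $N^i=\E(M^i)\oplus I_i^0\oplus I_{i-1}^1$ for $1\le i\le n-1$, and $N^n=\E(M^n)\oplus I_{n-1}^1$, which is exact under $\Hom_B(-,N')$ for every $N'\in\mathrm{add}N$. Applying $\Hom_B(-,N)$ converts it into a projective resolution of length $n$ of the right $\Lambda$-module $\Hom_B(\E(U),N)$, so $\mathrm{Pd}_\Lambda\Hom_B(\E(U),N)\le n=\mathrm{Pd}_\Gamma\Hom_B(U,M)$. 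The step I expect to be the main obstacle is the third one: setting up the dual $3\times3$ construction correctly, keeping the bookkeeping of the carried-along injective summands straight, and verifying the $\Hom_B(-,N')$-exactness of the spliced coresolution, together with the slightly indirect bound on $\mathrm{id}_B\Theta_i$.
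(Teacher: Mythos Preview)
Your proposal is correct and follows exactly the route the paper intends: the paper simply says ``Dually, we have the following lemma'' after proving Lemma~\ref{lemma2.13}, and your dualization is the right one. Two small points worth tightening. First, when you invoke the equivalence $\E:\F(\P)\to\X(\P)$ to identify $\Hom_B(\E(M^i),\E(M'))\to\Hom_B(\E(C^i),\E(M'))$ with $\Hom_A(M^i,M')\to\Hom_A(C^i,M')$, note that $M^i$ and $C^i$ need not lie in $\F(\P)$; you should pass through their torsion-free parts $fM^i,fC^i$ (using $\E(X)\cong\E(fX)$ and $\Hom_A(X,M')\cong\Hom_A(fX,M')$ for $M'\in\F(\P)$), exactly mirroring the paper's use of $tK_i$ in the proof of Lemma~\ref{lemma2.13}. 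Second, the dual $3\times3$ diagram also requires a map $\E(M^i)\to I_i^1$ to make the middle column a complex; this exists because the needed map factors through $L_i=\E(C^i)/\Theta_i\hookrightarrow\E(M^i)$ and $I_i^1$ is injective, so record that step. With these details in place your argument goes through verbatim; the statement's ``$Y\in\F(\P)$'' is evidently a typo for $Y\in\Y(\P)$, which you have read correctly.
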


\begin{lemma} Let $A$ be a finite dimensional algebra and $\P$ be a 2-term separating silting  complex in {\rm $K^{b}(\text{proj}A)$}  and {\rm$B=\text{End}_{D^{b}(A)}(\P)$}. If $Y$ is a non-projective indecomposable direct summand of {\rm$\text{H}^{0}(\P)$} and $0\rightarrow \tau Y\rightarrow U\rightarrow Y\rightarrow 0$ is an almost split sequence, then $U\in$ {\rm add($\text{H}^{0}(\P)\oplus  \text{H}^{-1}(\nu\P)$)}.
\end{lemma}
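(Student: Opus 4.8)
\noindent\emph{Proof plan.} Since $\P$ is separating, the torsion pair $(\T(\P),\F(\P))$ in $\mbox{mod}A$ is split, so every module decomposes as the direct sum of its torsion and torsion-free part, every indecomposable module lies in $\T(\P)$ or in $\F(\P)$, and $\Ext^{1}_{A}(\F(\P),\T(\P))=0$. As $Y$ is a direct summand of $\text{H}^{0}(\P)$ it lies in $\add\text{H}^{0}(\P)$, so by Lemma \ref{lemma2.14}(1) $Y$ is $\Ext$-projective in $\T(\P)$; in particular $Y\in\T(\P)$ and $\Ext^{1}_{A}(Y,Z)=0$ for every $Z\in\T(\P)$. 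I first locate $\tau Y$: if $\tau Y$ were in $\T(\P)$, then the almost split sequence $0\rightarrow\tau Y\rightarrow U\rightarrow Y\rightarrow 0$ would be a non-split element of $\Ext^{1}_{A}(Y,\tau Y)=0$, a contradiction; since the torsion pair is split and $\tau Y$ is indecomposable, $\tau Y\in\F(\P)$.

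The crucial step is to show $\Ext^{1}_{A}(W,\tau Y)=0$ for every $W\in\F(\P)$, i.e.\ that $\tau Y$ is $\Ext$-injective in $\F(\P)$. Because $Y$ is indecomposable and non-projective, $\tau Y$ is indecomposable and non-injective with $\tau^{-1}\tau Y=Y$, so the Auslander--Reiten formula gives
\[
D\Ext^{1}_{A}(W,\tau Y)\;\cong\;\underline{\Hom}_{A}(\tau^{-1}\tau Y,W)\;=\;\underline{\Hom}_{A}(Y,W),
\]
which is a quotient of $\Hom_{A}(Y,W)$; the latter is zero since $Y\in\T(\P)$ and $W\in\F(\P)$. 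Hence $\Ext^{1}_{A}(W,\tau Y)=0$.

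Now I apply $\Hom_{A}(-,Z)$ and $\Hom_{A}(W,-)$ to the almost split sequence, for arbitrary $Z\in\T(\P)$ and $W\in\F(\P)$. In the first long exact sequence one has $\Ext^{1}_{A}(Y,Z)=0$ ($\Ext$-projectivity of $Y$) and $\Ext^{1}_{A}(\tau Y,Z)=0$ (splitness, since $\tau Y\in\F(\P)$ and $Z\in\T(\P)$), so $\Ext^{1}_{A}(U,Z)=0$. In the second one has $\Ext^{1}_{A}(W,\tau Y)=0$ (the previous step) and $\Ext^{1}_{A}(W,Y)=0$ (splitness, since $W\in\F(\P)$ and $Y\in\T(\P)$), so $\Ext^{1}_{A}(W,U)=0$. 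Finally write $U=U'\oplus U''$ with $U'\in\T(\P)$ and $U''\in\F(\P)$, which is possible by splitness. As a direct summand of $U$, $U'$ satisfies $\Ext^{1}_{A}(U',Z)=0$ for all $Z\in\T(\P)$, hence is $\Ext$-projective in $\T(\P)$ and so $U'\in\add\text{H}^{0}(\P)$ by Lemma \ref{lemma2.14}(1); similarly $U''$ satisfies $\Ext^{1}_{A}(W,U'')=0$ for all $W\in\F(\P)$, hence is $\Ext$-injective in $\F(\P)$ and so $U''\in\add\text{H}^{-1}(\nu\P)$ by Lemma \ref{lemma2.14}(2). Therefore $U\in\add(\text{H}^{0}(\P)\oplus\text{H}^{-1}(\nu\P))$.

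The main obstacle is precisely the identity $\Ext^{1}_{A}(W,\tau Y)=0$ for $W\in\F(\P)$: it is the only point that goes beyond long exact sequences and the defining properties of the split torsion pair, and it relies on the Auslander--Reiten formula together with $\Hom_{A}(\T(\P),\F(\P))=0$. A tempting but incorrect shortcut would be to try to split $U$ directly as $tU\oplus\tau Y$; this fails because a torsion class need not be closed under submodules, so $tU\cap\tau Y$ can be non-zero (for instance when $U$ itself is torsion), and one really has to argue via the $\Ext$-vanishing above.
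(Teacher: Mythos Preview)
Your argument is correct, and it follows a genuinely different route from the paper's. The paper decomposes $U=K\oplus L$ with $K\in\T(\P)$, $L\in\F(\P)$ and then analyses each indecomposable summand via irreducible maps: from the almost split sequence one has an irreducible map $\tau Y\to K'$ for each indecomposable summand $K'$ of $K$, hence (when $K'$ is non-projective) an irreducible map $\tau K'\to\tau Y$; since $\tau Y\in\F(\P)$ and, by splitness, $\F(\P)$ is closed under predecessors, one gets $\tau K'\in\F(\P)$, so $K'$ is $\Ext$-projective in $\T(\P)$. The summands of $L$ are handled dually using an irreducible map $L'\to Y$ and closure of $\T(\P)$ under successors.

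Your approach bypasses irreducible maps entirely: you first observe via the Auslander--Reiten formula that $\tau Y$ is $\Ext$-injective in $\F(\P)$, and then a single application of the two long exact sequences for $\Hom_{A}(-,Z)$ and $\Hom_{A}(W,-)$ gives $\Ext^{1}_{A}(U,\T(\P))=0$ and $\Ext^{1}_{A}(\F(\P),U)=0$ simultaneously, so the torsion and torsion-free parts of $U$ inherit the required $\Ext$-projectivity and $\Ext$-injectivity. This is a bit more economical and avoids the case distinction projective/non-projective (resp.\ injective/non-injective) on the summands that the paper's irreducible-map argument needs. Conversely, the paper's argument makes the position of the summands of $U$ in the Auslander--Reiten quiver more transparent, which is in keeping with how the lemma is used later (Lemma~\ref{lemma2.18}). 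Either way, both proofs ultimately hinge on the same two facts: splitness of $(\T(\P),\F(\P))$ and the Auslander--Reiten formula.
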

\begin{proof} Since $\P$ is  separating, we can assume that $U\cong K\oplus L$ with $K\in\T(\P)$ and $L\in\F(\P)$. The the prove can be divided into two cases.\par
\textbf{Case 1.} Let $K'$ be an indecomposable direct summand of $K$. To prove $K'\in$ add$\text{H}^{0}(\P)$, it suffices to show that $K'$ is Ext-projective in $\T(\P)$ by Lemma \ref{lemma2.14}. If $K'$ is projective, then there is nothing to prove. Assume that $K'$ is not projective. Note that there is an irreducible map $\tau Y\rightarrow K'$. Then there is an irreducible map $\tau K'\rightarrow \tau Y$. Since $\tau\text{H}^{0}(\P)\in \F(\P)$ and $\F(\P)$ is closed under the predecessors, $\tau K'\in\F(\P)$. Hence, $K'$ is Ext-projective in $\T(\P)$.\par
\textbf{Case 2.} Let $L'$ be an indecomposable direct summand of $L$. To prove $L'\in$ add$\text{H}^{-1}(\nu\P))$, it suffices to show that $K'$ is Ext-injective in $\F(\P)$ by Lemma \ref{lemma2.14}.  If $L'$ is injective, then there is nothing to prove. Assume that $L'$ is not injective. Note that there is an irreducible map $ L'\rightarrow Y$. Then there is an irreducible map $Y\rightarrow \tau^{-1} L'$. Since $\text{H}^{0}(\P)\in \T(\P)$ and $\T(\P)$ is closed under the successors, $\tau^{-1} L'\in\T(\P)$. Hence, $L'$ is Ext-injective in $\F(\P)$.
\end{proof}

\begin{lemma}\label{lemma2.17}
Let $A$ be a finite dimensional algebra and $\P$ be a 2-term  silting  complex in {\rm $K^{b}(\text{proj}A)$}  such that {\rm Id$_{A}X\leq1$} for each $X\in\F(\P)$ and {\rm$B=\text{End}_{D^{b}(A)}(\P)$}. Then {\rm$\Hom_{B}(\E(I),\E(X))=0$} where $I\in\F(\P)$ is an injective module, $X\notin$ {\rm add$\text{H}^{-1}(\nu\P)$ and $X\in\F(\P)$ is indecomposable}.
\end{lemma}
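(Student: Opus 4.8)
The statement asserts that $\Hom_{B}(\E(I),\E(X))=0$ for $I\in\F(\P)$ injective and $X\in\F(\P)$ indecomposable with $X\notin\add\text{H}^{-1}(\nu\P)$. The natural starting point is to transport the problem back to $\text{mod}A$ via the equivalence in Theorem \ref{th1-1}(5), (8). Since $\E(I)=\Hom_{D^{b}(A)}(\P,\Sigma I)$ and $\E(X)=\Hom_{D^{b}(A)}(\P,\Sigma X)$ both lie in $\X(\P)=\T(\Q)$, and since $\H(-)$ restricted to $\C(\P)$ (equivalently, the functor sending $\Sigma Y\mapsto\E(Y)$ for $Y\in\F(\P)$) is fully faithful, I would first argue
\[
\Hom_{B}(\E(I),\E(X))\;\cong\;\Hom_{D^{b}(A)}(\Sigma I,\Sigma X)\;\cong\;\Hom_{A}(I,X).
\]
So the claim reduces to: if $I\in\F(\P)$ is injective in $\text{mod}A$, $X\in\F(\P)$ is indecomposable and not a summand of $\text{H}^{-1}(\nu\P)$, then $\Hom_{A}(I,X)=0$.

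The second step is to exploit Lemma \ref{lemma2.14}(2): $X\notin\add\text{H}^{-1}(\nu\P)$ means $X$ is \emph{not} $\Ext$-injective in $\F(\P)$, so there is a non-split short exact sequence $0\to X\to E\to Z\to 0$ in $\F(\P)$ with $\Ext^{1}_{A}(Z,X)\neq 0$; since $\F(\P)$ is a torsion-free class it is closed under submodules, hence $E,Z\in\F(\P)$. The key observation is that $X$ being non-injective together with the hypothesis $\text{Id}_{A}Y\le 1$ for all $Y\in\F(\P)$ — in particular $\text{Id}_{A}X\le 1$ — should force any map from an injective module $I$ into $X$ to vanish. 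Concretely, a nonzero morphism $f:I\to X$ with $I$ injective either splits off a summand of $I$ (impossible, as $X\in\F(\P)$ has no injective summands unless it is itself injective, contradicting non-injectivity, and by indecomposability) or factors through the injective envelope of its image; I would instead use the $\Ext$-injectivity criterion directly: apply $\Hom_{A}(-,X)$ and $\Hom_{A}(-,I)$ to the sequence $0\to X\to E\to Z\to 0$ and compare. Since $I$ is injective, $\Ext^{1}_{A}(Z,I)=0$, whereas $\text{Id}_{A}X\le 1$ gives $\Ext^{2}_{A}(-,X)=0$; feeding a nonzero $I\to X$ into the connecting maps of the long exact sequences, and using that the sequence is non-split in $\F(\P)$, should produce the contradiction.

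Alternatively — and this is probably the cleaner route — I would show directly that $\add(\text{injectives in }\F(\P))\subseteq\add\text{H}^{-1}(\nu\P)$: an injective $A$-module $I$ lying in $\F(\P)$ is trivially $\Ext$-injective in $\F(\P)$ (since $\Ext^{1}_{A}(-,I)=0$ altogether), hence by Lemma \ref{lemma2.14}(2) it is a summand of $\text{H}^{-1}(\nu\P)$. Then $\Hom_{A}(I,X)\neq 0$ with $I$ a summand of $\text{H}^{-1}(\nu\P)$: writing the almost-split machinery or using that $X\notin\add\text{H}^{-1}(\nu\P)$ is not yet enough, so I would instead argue that a nonzero map from an injective into the torsion-free, non-injective indecomposable $X$ forces, via $\text{Id}_{A}X\le 1$, the image to be an injective submodule, hence a summand, contradicting indecomposability and non-injectivity of $X$.

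The main obstacle is the final vanishing $\Hom_{A}(I,X)=0$: passing through the equivalence is formal, but extracting "$X$ not $\Ext$-injective in $\F(\P)$ and $\text{Id}_{A}X\le1$" $\Rightarrow$ "no nonzero map from an injective to $X$" requires care with the long exact sequences and the interplay between injectivity in $\text{mod}A$ and $\Ext$-injectivity relative to $\F(\P)$. I expect the homological-dimension hypothesis $\text{Id}_{A}X\le 1$ to be exactly what rules out the pathological maps, so the crux is organizing the $\Ext$ long exact sequence argument around a non-split extension of $X$ inside $\F(\P)$ together with the vanishing of $\Ext$ into the injective $I$.
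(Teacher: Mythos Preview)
Your reduction $\Hom_{B}(\E(I),\E(X))\cong\Hom_{A}(I,X)$ via the equivalence is correct, and your second route --- that the image of a nonzero $u:I\to X$ is an injective submodule, hence a nonzero direct summand of the indecomposable non-injective $X$ --- is exactly the paper's argument. One small correction: the injectivity of $\Im u$ follows from the short exact sequence $0\to\Ker u\to I\to\Im u\to 0$ together with $\text{Id}_{A}(\Ker u)\le 1$ (valid because $\Ker u\in\F(\P)$, as $\F(\P)$ is closed under submodules), not from $\text{Id}_{A}X\le 1$ per se. The paper separates the surjective case ($\Im u=X$ forces $X$ injective, hence $\Ext$-injective in $\F(\P)$, contradicting $X\notin\add\text{H}^{-1}(\nu\P)$) from the non-surjective case ($\Im u$ is a proper injective submodule, so the inclusion splits, contradicting indecomposability).

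Your first route, by contrast, does not close as written. From a non-split extension $0\to X\to E\to Z\to 0$ in $\F(\P)$ you get $\Ext^{1}_{A}(Z,X)\neq 0$ and $\Ext^{1}_{A}(Z,I)=0$, but a putative nonzero $f:I\to X$ only induces the pushforward $f_{\ast}:\Ext^{1}_{A}(Z,I)\to\Ext^{1}_{A}(Z,X)$, and the vanishing of the source places no constraint on $f$ itself; there is no contradiction to extract from the long exact sequences in that direction. Drop that detour and commit to the image-splits-off argument, which is both cleaner and complete.
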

\begin{proof}
It suffices to show that $\Hom_{A}(I,X)=0$. Assume that $\Hom_{A}(I,X)\neq0$. Then for any nonzero morphism $u:I\rightarrow X$. It is easy to see that $u$ is not injective. If $u$ is surjective, then there is a short exact sequence $0\rightarrow \Ker u\rightarrow I\rightarrow X\rightarrow0$ in $\F(\P)$. By the assumption on $\F(\P)$, we know that $X$ is injective and hence Ext-injective in $\F(\P)$. It is impossible. Hence, $u$ is not surjective. We consider the short exact $0\rightarrow \Ker u\rightarrow I\rightarrow \Im u\rightarrow0$. Then $\Im u$ is an injective module. Since $\Im u$ is a nonzero injective module, the inclusion map $\Im u\rightarrow X$ is split. It makes a contradiction. Therefore $\Hom_{A}(I,X)=0$.
\end{proof}
\begin{lemma}\label{lemma2.18}Let $A$ be a finite dimensional algebra and $\P$ be a 2-term separating silting  complex in {\rm $K^{b}(\text{proj}A)$}  such that {\rm Id$_{A}X\leq1$} for each $X\in\F(\P)$ and {\rm$B=\text{End}_{D^{b}(A)}(\P)$}. Then {\rm$\Hom_{B}(\E(\tau\text{H}^{0}(\P)),\E(X))=0$} where $X\notin$ {\rm add$\text{H}^{-1}(\nu\P)$ and $X\in\F(\P)$}.
\end{lemma}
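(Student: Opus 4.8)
The plan is to transport the statement into $\text{mod}A$ via the equivalence $\E$, to recognize $\tau\text{H}^{0}(\P)$ as an object of $\add\text{H}^{-1}(\nu\P)$, and then to run the argument of Lemma~\ref{lemma2.17} in this marginally more general setting.

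Since $\P$ is separating one has $\tau\text{H}^{0}(\P)\in\F(\P)$ (this fact is already used in the proof of the lemma just before Lemma~\ref{lemma2.17}); hence $\Sigma\tau\text{H}^{0}(\P)$ and $\Sigma X$ both lie in $\C(\P)$, and, as $\H\colon\C(\P)\to\text{mod}B$ is an equivalence and $\E(-)=\H(\Sigma-)$,
$$\Hom_{B}(\E(\tau\text{H}^{0}(\P)),\E(X))\cong\Hom_{D^{b}(A)}(\Sigma\tau\text{H}^{0}(\P),\Sigma X)\cong\Hom_{A}(\tau\text{H}^{0}(\P),X),$$
so it is enough to prove $\Hom_{A}(\tau\text{H}^{0}(\P),X)=0$, and we may assume $X$ indecomposable. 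Next, writing $\text{H}^{0}(\P)$ as a direct sum of indecomposables, the projective summands contribute nothing to $\tau\text{H}^{0}(\P)$, while for a non-projective summand $Y$ the Auslander--Reiten formula gives, for every $F\in\F(\P)$, $\Ext^{1}_{A}(F,\tau Y)\cong D\underline{\Hom}_{A}(Y,F)$, which vanishes because $\Hom_{A}(Y,F)=0$ (as $Y\in\T(\P)$ and $F\in\F(\P)$). Thus $\tau\text{H}^{0}(\P)$ is Ext-injective in $\F(\P)$, so by Lemma~\ref{lemma2.14}(2) it lies in $\add\text{H}^{-1}(\nu\P)$, and the lemma becomes the special case $Z=\tau\text{H}^{0}(\P)$ of the following statement: \emph{if $Z\in\F(\P)$ is Ext-injective in $\F(\P)$ and $X\in\F(\P)$ is indecomposable with $X\notin\add\text{H}^{-1}(\nu\P)$, then $\Hom_{A}(Z,X)=0$.}

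To prove this I would imitate Lemma~\ref{lemma2.17}. Assume $v\colon Z\to X$ is nonzero. In $0\to\Ker v\to Z\to\Im v\to 0$ the module $\Ker v$, being a submodule of $Z\in\F(\P)$, lies in $\F(\P)$, so $\text{Id}_{A}\Ker v\leq1$; applying $\Hom_{A}(F,-)$ for $F\in\F(\P)$ and using $\Ext^{1}_{A}(F,Z)=0$ together with $\Ext^{2}_{A}(F,\Ker v)=0$ shows $\Ext^{1}_{A}(F,\Im v)=0$, i.e.\ $\Im v$ is again Ext-injective in $\F(\P)$. If $v$ is surjective this already says $X$ is Ext-injective in $\F(\P)$, contradicting the hypothesis on $X$; in general it remains to show that the inclusion $\Im v\hookrightarrow X$ splits, for then $X\cong\Im v\in\add\text{H}^{-1}(\nu\P)$, again a contradiction.

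The main obstacle is exactly this splitting. In Lemma~\ref{lemma2.17} it was free because there the image was a genuine injective $A$-module, whereas $\Im v$ is only Ext-injective in $\F(\P)$; so one must show $X/\Im v\in\F(\P)$ (equivalently, that its torsion submodule vanishes), and this is where the separating hypothesis and the bound $\text{Id}_{A}(-)\leq1$ on $\F(\P)$ have to be used in an essential way. One promising route is to work instead with the almost split sequence $0\to\tau Y\to U\to Y\to 0$ of a non-projective summand $Y$ of $\text{H}^{0}(\P)$: by the lemma preceding Lemma~\ref{lemma2.17}, $U\in\add(\text{H}^{0}(\P)\oplus\text{H}^{-1}(\nu\P))$; since $X$ is indecomposable and $X\not\cong\tau Y$, every morphism $\tau Y\to X$ factors through the left almost split map $\tau Y\to U$; and the $\text{H}^{0}(\P)$-part of $U$ contributes nothing because $\Hom_{A}(\T(\P),\F(\P))=0$, so everything is concentrated on the $\text{H}^{-1}(\nu\P)$-part, where the injective-dimension bound can be brought to bear as above. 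Organizing this so that the splitting actually comes out is the step I expect to demand the most care.
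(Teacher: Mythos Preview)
Your first approach---reducing to $\Hom_{A}(\tau\text{H}^{0}(\P),X)=0$ and then trying to show that the inclusion $\Im v\hookrightarrow X$ splits---does not go through, and the obstacle you flag is genuine. The module $\Im v$ is only Ext-injective in $\F(\P)$, and $\F(\P)$, being a torsion-free class, is closed under submodules but not under quotients; there is no mechanism (separating hypothesis included) forcing $X/\Im v$ into $\F(\P)$. So the relative injectivity of $\Im v$ gives you nothing against the extension $0\to\Im v\to X\to X/\Im v\to 0$, and this line really is stuck.

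Your second route, via the almost split sequence $0\to\tau Y\to T\oplus Q\to Y\to 0$ with $T\in\add\text{H}^{0}(\P)$ and $Q\in\add\text{H}^{-1}(\nu\P)$, is the one the paper takes, but you are missing the device that makes it terminate. The paper does \emph{not} go back to $\text{mod}A$; it stays in $\text{mod}B$ and exploits the fact (Lemma~\ref{lemm2-11}) that $\E(\tau Y)$ is genuinely \emph{injective} there, not merely Ext-injective in a subcategory. Concretely: one checks that $\E(g)\colon\E(\tau Y)\to\E(Q)$ is a minimal left almost split morphism in $\text{mod}B$; since $\E(\tau Y)$ is indecomposable injective, this forces $\E(Q)\cong\E(\tau Y)/\operatorname{soc}\E(\tau Y)$, and in particular $l(\E(Q))<l(\E(\tau Y))$. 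Writing $Q\cong\tau Z''\oplus I$ with $Z''\in\add\text{H}^{0}(\P)$ and $I$ injective in $\text{mod}A$, any $u\colon\E(\tau Y)\to\E(X)$ factors through $\E(\tau Z'')\oplus\E(I)$; the $\E(I)$-component vanishes by Lemma~\ref{lemma2.17}, and the $\E(\tau Z'')$-component vanishes by induction on length, since each indecomposable summand of $\E(\tau Z'')$ has length at most $l(\E(Q))<l(\E(\tau Y))$. The ``injective-dimension bound'' you hoped to invoke plays no role at this step; the whole weight is carried by injectivity in $\text{mod}B$ and the resulting length drop.
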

\begin{proof} Assume that $X$ is indecomposable. It has shown that $\E(\text{H}^{-1}(\nu\P))$ is an injective $B$-module by Lemma \ref{lemm2-11}. Now assume that $Z$ is is a non-projective indecomposable direct summand of {\rm$\text{H}^{0}(\P)$}. Then there is an AR-sequence
            $$\xymatrix{0\ar[r]&\tau Z\ar[r]^-{{\tiny\left[\begin{smallmatrix}
               f\\g
              \end{smallmatrix}\right]}}&T\oplus Q\ar[r]&Z\ar[r] &0 }.$$
with $T\in\text{add}\text{H}^{0}(\P)$ and $Q\in\text{add}\text{H}^{-1}(\nu\P)$.\par
 Next, we claim that $\E(g):\E(\tau Z)\rightarrow \E(Q)$ is a minimal left almost split sequence. It is easy to check that $\E(g)$ is a left minimal morphism. Next we shall show that $\E(g)$ is a left almost split morphism. Let $W$ be an indecomposable $B$-module and $h:\E(\tau Z)\rightarrow W$ be not split monomorphism. Clearly, $W\in\X(\P)$ and hence $W\cong \E(V)$ for some indecomposable $A$-module $V\in\F(\P)$. Thus, there is a morphism $h':\tau Z\rightarrow V$ such that $\E(h')=h$. Since $\left[\begin{smallmatrix}
               f\\g
              \end{smallmatrix}\right]$ is a left almost split morphism, we have the following commutative diagram
              $$\xymatrix{     &\tau Z\ar[d]_{h'}\ar[r]^-{{\tiny\left[\begin{smallmatrix}
               f\\g
              \end{smallmatrix}\right]}}&T\oplus Q\ar[dl]^{{\tiny\left[\begin{smallmatrix}
               s&t
              \end{smallmatrix}\right]}}\\
              &V&      }$$
              Note that $\Hom_{A}(T,V)=0$ since $T\in\T(\P)$. Thus, $s=0$ and so, $h'=tg$. Then $h=\E(h')=\E(t)\E(g)$.\par
 Now, we focus on $Q$. Note that $\text{H}^{-1}(\nu\P)=\tau \text{H}^{0}(\P)\oplus P$ where $P\in\F(\P)$ is a projective $A$-module. If $P\in$ add$\tau \text{H}^{0}(\P)$, then $Q=\tau Z'$ for some $Z'\in$ add$\tau \text{H}^{0}(\P)$. Suppose that $P\notin$ add$\tau \text{H}^{0}(\P)$. In this case, we claim that $P$ is also an injective $A$-module. Since $\tau^{-1}\text{H}^{-1}(\nu\P)\in$ add$\text{H}^{0}(\P)$. Then $\tau^{-1}P\in\text{H}^{0}(\P)$. If $\tau^{-1}P$ is a nonzero module, then $\tau\tau^{-1}P\cong P\in$ add$\tau \text{H}^{0}(\P)$. It yields a contradiction. Thus, the claim holds. In this case, $Q\cong \tau Z''\oplus I$ where $Z''\in$ add$\text{H}^{0}(\P)$ and $I\in$ add$P$.\par
 We assume that $Q\cong \tau Z''\oplus I$. Now, we can prove that $\Hom_{B}(\E(\tau Z),\E(X))=0$ for  $X\notin$  add$\text{H}^{-1}(\nu\P)$ and $X\in\F(\P)$. Let $u\in\Hom_{B}(\E(\tau Z),\E(X))$. Then $u$ is not spit monomorphism. Hence, we have the following commutative diagram
              $$\xymatrix{     &\E(\tau Z)\ar[d]_{u}\ar[r]^-{{\tiny\E(g)=\left[\begin{smallmatrix}
               \alpha\\\beta
              \end{smallmatrix}\right]}}&\E(\tau Z'')\oplus \E(I)\ar[dl]^{{\tiny\left[\begin{smallmatrix}
               \gamma&\delta
              \end{smallmatrix}\right]}}\\
              &\E(X)&      }$$
              By Lemma \ref{lemma2.17}, $\delta=0$ and so, $\gamma\alpha=u$. Since $\E(\tau Z)\in$ add$\E(\text{H}^{-1}(\nu\P))$, $\E(\tau Z)$ is injective. It follows that $\E(Q)\cong \E(\tau Z)/ \text{Soc}\E(g)$. Then the length $l(\E(Q))$ of $\E(Q)$ smaller than that of $\E(\tau Z)$. Using the induction on the length of $l(\tau Z)$, we know that $\Hom_{B}(\E(\tau Z),\E(X))=0$. It completes the proof.
\end{proof}
Dually, we have the following result.
\begin{lemma}\label{lemm2.16}Let $A$ be a finite dimensional algebra and $\P$ be a 2-term separating silting  complex in {\rm $K^{b}(\text{proj}A)$}  such that {\rm Pd$_{A}X\leq1$} for each $X\in\T(\P)$ and {\rm$B=\text{End}_{D^{b}(A)}(\P)$}. Then {\rm$\Hom_{B}(\H(X),\H(\text{H}^{0}(\P)))=0$} where $X\notin$ {\rm add$\text{H}^{0}(\P)$ and $X\in\T(\P)$}.
\end{lemma}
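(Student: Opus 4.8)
This is the formal dual of Lemma~\ref{lemma2.18}, and the plan is to mirror that proof, interchanging the torsionfree class $\F(\P)$ with the torsion class $\T(\P)$, the functor $\E$ with $\H$, injective $A$-modules with projective $A$-modules, and the $\Ext$-injectives $\text{H}^{-1}(\nu\P)$ of $\F(\P)$ with the $\Ext$-projectives $\text{H}^{0}(\P)$ of $\T(\P)$; correspondingly the hypothesis ``$\text{Id}_{A}(-)\le 1$ on $\F(\P)$'' becomes ``$\text{Pd}_{A}(-)\le 1$ on $\T(\P)$''. First note that the latter forces $\Ext^{2}_{A}(\T(\P),\F(\P))=0$, so $\P$ is splitting and Lemma~\ref{lem1-4} applies: $\H(\text{H}^{0}(\P))$ is a projective $B$-module. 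Since $\Hom_{B}$ is additive and $\T(\P)$ is closed under summands, we may assume $X$ indecomposable, and it suffices to prove $\Hom_{B}(\H(X),\H(Z))=0$ for every indecomposable summand $Z$ of $\text{H}^{0}(\P)$; note $\H(Z)$ is then an indecomposable projective $B$-module.

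I would first record the dual of Lemma~\ref{lemma2.17}: if $P\in\T(\P)$ is projective, $X\in\T(\P)$ is indecomposable and $X\notin\add\text{H}^{0}(\P)$, then $\Hom_{B}(\H(X),\H(P))=0$. As $\H\colon\C(\P)\to\text{mod}B$ is an equivalence and $X,P$ are modules, this reduces to $\Hom_{A}(X,P)=0$. Suppose $0\ne u\colon X\to P$. If $u$ is epi, then $0\to\Ker u\to X\to P\to 0$ splits ($P$ projective), so $X\cong\Ker u\oplus P$; indecomposability forces $X\cong P$, hence $X$ is a projective module in $\T(\P)$, hence $\Ext$-projective there, hence in $\add\text{H}^{0}(\P)$ by Lemma~\ref{lemma2.14} --- a contradiction, so $u$ is not epi. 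If $u$ is mono, then $\text{Coker}\,u\in\T(\P)$ (a quotient of $P$), so $\text{Pd}_{A}\text{Coker}\,u\le 1$, and the long exact $\Ext$-sequence of $0\to X\to P\to\text{Coker}\,u\to 0$ gives $\text{Pd}_{A}X=0$ --- the same contradiction, so $u$ is not mono. Finally $P/\Im u\in\T(\P)$, so $\text{Pd}_{A}(P/\Im u)\le 1$ and hence $\Im u$ is projective; being a nonzero projective quotient of $X$ it splits off $X$, forcing $X\cong\Im u$, i.e.\ $u$ is mono --- contradicting the previous step. (Here the only change from Lemma~\ref{lemma2.17} is that one uses closure of $\T(\P)$ under quotients in place of closure of $\F(\P)$ under submodules.)

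For the main statement I induct on $\ell(\H(Z))$ over the indecomposable non-projective summands $Z$ of $\text{H}^{0}(\P)$ (a projective $Z$ lies in $\add\text{H}^{0}(\P)$, so there is nothing to prove). Take the almost split sequence $0\to\tau Z\to T\oplus Q\xrightarrow{(\phi_{1},\phi_{2})}Z\to 0$ with $T\in\add\text{H}^{0}(\P)$ and $Q\in\add\text{H}^{-1}(\nu\P)$, which exists by the lemma preceding Lemma~\ref{lemma2.17} (using that $\P$ is separating). Applying $\H$ and using that $\H$ vanishes on $\F(\P)$ (so $\H(\tau Z)=\H(Q)=0$), $\E$ vanishes on $\T(\P)$ (so $\E(T)=\E(Z)=0$), and $\Hom_{D^{b}(A)}(\P,\Sigma^{-1}Z)=0$ by Lemma~\ref{lem1-1}, the six-term exact sequence collapses to
\[
0\to\H(T)\xrightarrow{\H(\phi_{1})}\H(Z)\to\E(\tau Z)\to\E(Q)\to 0 .
\]
Thus $\H(\phi_{1})$ is a monomorphism, hence right minimal; it is also right almost split, since for an indecomposable $W$ and a non-split epi $h\colon W\to\H(Z)$: if $W\in\X(\P)$ then $h=0$ as $\H(Z)\in\Y(\P)$ and $(\X(\P),\Y(\P))$ is a torsion pair, whereas if $W\in\Y(\P)$ then $W\cong\H(V)$ with $V\in\T(\P)$, $h=\H(h')$, and factoring $h'$ through $(\phi_{1},\phi_{2})$ kills the $Q$-component (since $\Hom_{A}(V,Q)=0$), so $h$ factors through $\H(\phi_{1})$. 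As $\H(Z)$ is indecomposable projective, a minimal right almost split map into it is $\text{rad}\,\H(Z)\hookrightarrow\H(Z)$, so $\H(T)\cong\text{rad}\,\H(Z)$ and $\ell(\H(T))<\ell(\H(Z))$. Now any $u\in\Hom_{B}(\H(X),\H(Z))$ is not split epi (else the indecomposable projective $\H(Z)$ would be a summand of the indecomposable $\H(X)$, forcing $X\cong Z\in\add\text{H}^{0}(\P)$), so $u=\H(\phi_{1})\circ v$ with $v\colon\H(X)\to\H(T)=\bigoplus_{i}\H(T_{i})$; each component of $v$ lies in $\Hom_{B}(\H(X),\H(T_{i}))$, which is $0$ by the dual of Lemma~\ref{lemma2.17} when $T_{i}$ is projective, and by the induction hypothesis when $T_{i}$ is non-projective (then $\ell(\H(T_{i}))\le\ell(\H(T))<\ell(\H(Z))$). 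Hence $v=0$, so $u=0$; the base case is the same computation, where no non-projective $T_{i}$ can occur.

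The main obstacle is the middle step: verifying that $\H(\phi_{1})\colon\H(T)\to\H(Z)$ is a minimal right almost split morphism in $\text{mod}B$ and, crucially, a monomorphism with small cokernel --- this is precisely where the collapse of the long exact sequence (vanishing of $\H$ on $\F(\P)$, of $\E$ on $\T(\P)$, and Lemma~\ref{lem1-1}) is used --- together with identifying $\H(T)$ with $\text{rad}\,\H(Z)$ via the projectivity of $\H(\text{H}^{0}(\P))$ (Lemma~\ref{lem1-4}), which makes the length drop and feeds the induction. Everything else is a faithful mirror of the proof of Lemma~\ref{lemma2.18}, the only substantive difference being the reversal of the relevant closure properties ($\T(\P)$ closed under quotients rather than submodules), which is exactly what makes the $\text{Coker}$-based arguments in the dual of Lemma~\ref{lemma2.17} go through.
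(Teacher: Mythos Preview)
Your proof is correct and is precisely the dualization the paper intends: the paper does not supply a separate argument for this lemma but simply writes ``Dually, we have the following result'' after Lemma~\ref{lemma2.18}, and your proof faithfully carries out that dualization. The only cosmetic remark is that the parenthetical ``a projective $Z$ lies in $\add\text{H}^{0}(\P)$, so there is nothing to prove'' is awkwardly phrased --- what you mean (and what the rest of your argument makes clear) is that a projective summand $Z$ is covered by the dual of Lemma~\ref{lemma2.17} that you established in the preceding paragraph.
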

Now, we are in position to prove our main result.
\begin{theorem}\label{main result} Let $A$ be a finite dimensional algebra and  let $\P$ be a 2-term separating silting  complex in {\rm $K^{b}(\text{proj}A)$} such that {\rm Id$_{A}X\leq1$} for each $X\in\F(\P)$ and {\rm$B=\text{End}_{D^{b}(A)}(\P)$}. Then {\rm rep.dim$B=$rep.dim$A$}. 
\end{theorem}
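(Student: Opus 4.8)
The plan is to prove the two inequalities $\text{rep.dim}B\leq\text{rep.dim}A$ and $\text{rep.dim}A\leq\text{rep.dim}B$ separately. By the remark following Theorem \ref{th1-1}(3), a separating silting complex is automatically splitting (indeed it is tilting), and when it is both splitting and separating the induced complex $\Q\in K^{b}(\text{proj}B)$ is again separating and splitting, with $A\cong\text{End}_{D^{b}(B)}(\Q)$, $(\X(\Q),\Y(\Q))=(\T(\P),\F(\P))$, and $(\T(\Q),\F(\Q))=(\X(\P),\Y(\P))$. Moreover Proposition \ref{prop2.12} then applies to give $\text{Pd}_{B}M\leq1$ for all $M\in\X(\P)=\T(\Q)$, and, after checking the dual homological hypothesis, Proposition \ref{proposition 2.10} gives $\text{id}_{B}N\leq1$ for all $N\in\Y(\P)=\F(\Q)$. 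Because the whole setup is symmetric under $(A,\P)\leftrightarrow(B,\Q)$ once one knows the $\F(\Q)$-modules also have injective dimension $\leq1$, it suffices to prove just one inequality, say $\text{rep.dim}B\leq\text{rep.dim}A$, and then invoke symmetry.

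For $\text{rep.dim}B\leq\text{rep.dim}A$, start from an Auslander generator $M$ of $\text{mod}A$ with $\text{gl.dim}\text{End}_{A}(M)=\text{rep.dim}A=d$, which I may assume is $\geq2$ (the representation-finite case being separately trivial since then $B$ is also representation-finite via the equivalence $\C(\P)\simeq\text{mod}B$). Enlarging $M$, assume $M$ is a generator-cogenerator. The candidate Auslander generator for $\text{mod}B$ is
\begin{equation*}
N=B\oplus DB\oplus\H(M\cap\T(\P))\oplus\E(M\cap\F(\P)),
\end{equation*}
or more precisely $N=B\oplus DB\oplus\H(tM)\oplus\E(fM)$ where $tM,fM$ are the torsion/torsionfree parts; since $\P$ is separating every indecomposable summand of $M$ lies in $\T(\P)$ or in $\F(\P)$, so this makes sense. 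This $N$ is a generator-cogenerator of $\text{mod}B$ because $B$ and $DB$ are summands. Now take an arbitrary $B$-module $Z$; by splitness of $(\X(\P),\Y(\P))$ write $Z=Z_{\X}\oplus Z_{\Y}$ with $Z_{\X}\in\X(\P)$ and $Z_{\Y}\in\Y(\P)$. For the $\Y(\P)$-part, $Z_{\Y}=\H(U)$ for some $U\in\T(\P)$, and Lemma \ref{lemma2.13} (applied with the generator $M$ and with $Y$ the $\X(\P)$-summands of $N$) gives $\text{Pd}_{\text{End}_{B}(N)}\Hom_{B}(N,\H(U))\leq\text{Pd}_{\text{End}_{A}(M)}\Hom_{A}(M,U)\leq d$. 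Dually, for the $\X(\P)$-part $Z_{\X}=\E(V)$ with $V\in\F(\P)$, Lemma \ref{lemma2.12} gives the analogous bound on the opposite algebra; combined with the standard fact (Lemma \ref{lemma2.0}, the equivalence of (1) and (3)) that a bound on $\text{Pd}$ of $\Hom(-,M)$-images over $\text{End}_A(M)^{op}$ also controls $\text{gl.dim}$, this yields $\text{Pd}$ of the corresponding $\text{End}_{B}(N)$-module $\leq d$ as well.

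The remaining, and I expect main, obstacle is to pass from these two partial projective-dimension bounds on the ``pure'' pieces $\H(U)$ and $\E(V)$ to a genuine global-dimension bound $\text{gl.dim}\text{End}_{B}(N)\leq d+2$, i.e. a projective resolution bound for $\Hom_{B}(N,Z)$ for \emph{every} $Z$, not just the torsion or torsionfree ones. The point is that $\text{End}_{B}(N)$ has a triangular-type structure coming from $\Hom_{B}(\E(\cdot),\H(\cdot))=0$ (since $\X(\P)$-modules have no maps to $\Y(\P)$-modules) together with the one-directional ``gluing'' maps $\Hom_{B}(\H(\cdot),\E(\cdot))$. The vanishing Lemmas \ref{lemma2.17} and \ref{lemma2.18} (and their dual \ref{lemm2.16}) are exactly what is needed to control these gluing morphisms: they show $\Hom_{B}(\E(I),\E(X))=0$ and $\Hom_{B}(\E(\tau\text{H}^{0}(\P)),\E(X))=0$ for $X\notin\text{add}\,\text{H}^{-1}(\nu\P)$, which one uses to splice a projective $\text{End}_{B}(N)$-resolution of $\Hom_{B}(N,Z_{\X})$ onto one of $\Hom_{B}(N,Z_{\Y})$ without the connecting maps raising the length. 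Concretely I would: (i) take the canonical sequence $0\to Z_{\Y}'\to Z\to Z_{\X}'\to 0$ (or its $\X$-$\Y$ analogue with the torsion pair on $\text{mod}B$), note applying $\Hom_{B}(N,-)$ stays exact on the relevant pieces because $\text{Ext}^{1}$ between the pieces is killed by the splitness and the $\text{Pd}\leq1$/$\text{id}\leq1$ statements from Propositions \ref{prop2.12} and \ref{proposition 2.10}; (ii) resolve each end by $\text{add}N$ using Lemmas \ref{lemma2.13} and \ref{lemma2.12}, keeping track that the resolving modules of the $\X$-part have no $\Hom_{B}(N,-)$-contributions landing in the $\Y$-part and vice versa, using the vanishing lemmas; (iii) horseshoe the two resolutions to get one of length $\leq d$ for $\Hom_{B}(N,Z)$ that stays exact after $\Hom_{B}(N,-)$. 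Then Lemma \ref{lemma2.0} gives $\text{gl.dim}\text{End}_{B}(N)\leq d+2$, hence $\text{rep.dim}B\leq d=\text{rep.dim}A$; applying the same argument with the roles of $(A,\P)$ and $(B,\Q)$ exchanged gives the reverse inequality, and the theorem follows.
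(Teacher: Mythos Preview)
Your overall architecture matches the paper's, but there is a real gap in how you treat the $\X(\P)$-piece $Z_{\X}=\E(V)$. You invoke Lemma~\ref{lemma2.12}, but its hypothesis is $\mathrm{Pd}_A X\le 1$ for all $X\in\T(\P)$, which is \emph{not} the hypothesis of the theorem ($\mathrm{Id}_A X\le 1$ on $\F(\P)$) and does not follow from it. The same obstruction kills your symmetry reduction: to conclude via Proposition~\ref{proposition 2.10} that $\mathrm{id}_B N\le 1$ for $N\in\F(\Q)=\Y(\P)$ you would again need $\mathrm{Pd}_A X\le 1$ on $\T(\P)$, which is unavailable. (Incidentally, ``separating $\Rightarrow$ splitting'' is false in general; here splitting comes from the $\mathrm{Id}\le 1$ hypothesis via $\Ext^2_A(\T(\P),\F(\P))=0$, not from $\P$ being tilting.) Even granting Lemma~\ref{lemma2.12}, you would be mixing a \emph{coresolution} bound on $\Hom_B(\E(V),N)$ for $Z\in\X(\P)$ with a \emph{resolution} bound on $\Hom_B(N,\H(U))$ for $Z\in\Y(\P)$; Lemma~\ref{lemma2.0} needs the bound for all $Z$ on one fixed side, so these do not splice. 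Your horseshoe paragraph is also unnecessary: since $(\X(\P),\Y(\P))$ splits, $Z=Z_{\X}\oplus Z_{\Y}$ already, and bounding each summand suffices.

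The paper treats $\E(V)$ directly, without Lemma~\ref{lemma2.12}. One takes a minimal $\mathrm{add}\,M$-resolution of $V$; since $\F(\P)$ is closed under predecessors, all terms and kernels $K_i$ remain in $\F(\P)$, so $\E$ preserves the short exact sequences. Exactness after $\Hom_B(N',-)$ is then checked summand by summand: for $N'\in\Y(\P)$ via the split torsion pair, for $N'\in\mathrm{add}\,\E(M_\F)$ via the approximation property of the original resolution, and for the remaining case $N'\in\mathrm{add}\,\E(\tau\mathrm{H}^0(\P))$ via Lemma~\ref{lemma2.18}, which gives $\Hom_B(N',\E(K_i))=0$ outright (this is the actual role of that lemma, not ``gluing''). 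This yields an $\mathrm{add}\,N$-resolution of $\E(V)$ of length $\le n$. For the reverse inequality the paper does not claim $\mathrm{Id}_B\le 1$ on $\F(\Q)$; instead Proposition~\ref{prop2.12} gives $\mathrm{Pd}_B\le 1$ on $\T(\Q)=\X(\P)$, and one runs the \emph{dual} argument (coresolutions, Lemma~\ref{lemma2.0}(3), Lemma~\ref{lemma2.12}, Lemma~\ref{lemm2.16}) for $\Q$. Thus the two halves of the proof use genuinely different homological conditions rather than the same one twice.
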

\begin{proof}  Assume that $\P=P_{-1}\rightarrow P_{0}$. Consider the following exact sequence
\begin{equation*}
  0\rightarrow \text{H}^{-1}(\nu\P)\rightarrow \nu P_{-1}\rightarrow \nu P_{0}\rightarrow \text{H}^{0}(\nu\P)\rightarrow0.
\end{equation*}
By the assumption on $\F(\P)$ and $\text{H}^{-1}(\nu\P)\in\F(\P)$, then $H^{0}(\nu\P)$ is an injective $A$-module. Now, let $M$ be the Auslander generator of $A$ such that $\text{gl.dim}\text{End}_{A}(M)=\text{rep.dim}A=n+2$.\par
Since $\P$ is a separating silting complex, we have $M\cong M_{\T}\oplus M_{\F}$ with $M_{\T}\in$ add$M\bigcap \T(\P)$ and $M_{\F}\in$ add$M\bigcap \F(\P)$. Then by Lemma \ref{lemm2-10}, $H^{0}(\nu\P)\in$ add$M_{\T}$. Now, we set a $B$-module $$N=B\oplus\H(M_{\T})\oplus\E(M_{\F})\oplus\E(H^{-1}(\nu\P)).$$
By Lemma \ref{lemm2-11}, we know that $DB=\E( \text{H}^{-1}(\nu\P))\oplus\H( \text{H}^{0}(\nu\P))$. Thus, $DB\in$ add$N$. It implies that $N$ is a generator and cogenerator of mod$B$. Let $\Lambda=\text{End}_{B}(N)$ and $\Gamma=\text{End}_{A}(M)$. Next, we shall prove that gl.dim$\Lambda\leq$gl.dim$\Gamma$. It suffices to show that Pd$_{\Lambda}\Hom_{B}(N,X)\leq n$ for any indecomposable $B$-module $X$. Since $(\X(\P),\Y(\P))$ is a splitting torsion pair, $X\in\X(\P)$ or $X\in\Y(\P)$. If $X\in\Y(\P)$, then there exists a module $U\in$ $\T(\P)$ such that $X\cong\H(U)$. By Lemma \ref{lemma2.13}, we have Pd$_{\Lambda}\Hom_{B}(N,\H(U))\leq $ Pd$_{\Gamma}\Hom_{A}(M,U)\leq n $. If $X\in\X(\P)$, then there exists a module $V\in$ $\F(\P)$ such that $X\cong\E(V)$. If $V\in$ add$\text{H}^{-1}(\nu\P)$, then $\Hom_{B}(N,\E(V))\in$  add$\Lambda$ and so, Pd$_{\Lambda}\Hom_{B}(N,X)=0\leq n$. Assume that $V\notin$ add$\text{H}^{-1}(\nu\P)$. We assume that $m\leq n$ is the maximal integer such that there is a minimal add$M$-resolution of $V$
$$0\rightarrow M_{m}\rightarrow\cdots\rightarrow  M_{1}\rightarrow M_{0}\rightarrow V\rightarrow0$$
with all $M_{i}\in$ add$M$.
In other words, for each $0\leq i\leq m-1$, we have a short exact sequence
$$0\rightarrow  K_{i+1}\rightarrow M_{i}\xrightarrow{f_{i}} K_{i}\rightarrow0$$
where $f_{i}$ is a right minimal add$M$-approximation, $K_{0}=V$ and $K_{m}=M_{m}$. Since $\F(\P)$ is closed under the predecessors, $K_{i}$ and $M_{i}$ are in $\F(\P)$. Hence, any $K_{i}$ does not lie in add$\text{H}^{-1}(\nu\P)$ since $\text{H}^{-1}(\nu\P)$ is Ext-injective in $\F(\P)$. Applying $\H$ to these short exact sequences, we have
$$0\rightarrow \E(K_{i+1})\rightarrow \E(M_{i})\xrightarrow{\E(f_{i})} \E(K_{i})\rightarrow0$$
for $0\leq i\leq m-1$. \par
Next, we claim that these above induced sequences keep exact after applying $\Hom_{B}(N',-)$ for each indecomposable direct summand $N'$ of $N$. Since $\P$ is splitting, $N'\in \Y(\P)$ or $N'\in \X(\P)$. If $N'\in \Y(\P)$, then by AR-formula, we have $\Ext^{1}_{B}(N', \E(K_{i+1}))=D\overline{\Hom_{B}}(\E(K_{i+1}),\tau N')=0$ for $0\leq i\leq m-1$ since $\tau N'\in \Y(\P)$ and $\E(K_{i+1})\in\X(\P)$. Thus, in this case, the claim holds. Now, we assume that $N'\in\X(\P)$. If $N'\in\E(M_{\F})$, then there exists $N''\in $ add$M_{\F}$ such that $\E(N'')\cong N'$. Then we have the following commutative diagram
$$\xymatrix{
  \Hom_{B}(\E(N''),\E(M_{i}))\ar[d]^{\cong}\ar[r]&\Hom_{B}(\E(N''),\E(K_{i}))\ar[d]^{\cong}&\\
  \Hom_{B}(N'',M_{i})\ar[r]^{\Hom_{B}(N'',f_{i})}&\Hom_{B}(N'',K_{i})\ar[r]&0}$$
Thus, the upper row is surjective and  the claim holds. We consider $N'\in\E(\text{H}^{-1}(\nu\P))$. Note that $\text{H}^{-1}(\nu\P)\cong \tau \text{H}^{0}(\P)\oplus P $ with $P\in$ add$A\bigcap\F(\P)\subseteq$ add$M_{\F}$. If
$N'\in$ add$\E(P)$, then there is noting to prove. We assume that $N'\in$ add$\E(\tau \text{H}^{0}(\P))$. By Lemma \ref{lemma2.18}, we know that $\Hom_{B}(N',\E(K_{i}))=0$. Hence, the claim holds.\par
Therefore, there is a long exact sequence
$$0\rightarrow \Hom_{B}(N,\E(M_{m}))\rightarrow\cdots\rightarrow \Hom_{B}(N,\E(M_{0}))\rightarrow \Hom_{B}(N,\E(V))\rightarrow0$$
where all $\E(M_{i})\in$ add$N$. This implies that Pd$_{\Lambda}\Hom_{B}(N,\E(V))\leq m\leq n$ and so Pd$_{\Lambda}\Hom_{B}(N,X)\leq n$ for any $X\in$ mod$B$. Then, gl.dim$\text{End}_{B}(N)\leq n+2=$gl.dim$\text{End}_{A}(M)=$rep.dim$A$. Therefore, rep.dim$B\leq$rep.dim$A$.\par
Assume that $\Theta$ is an Aulsander generator of mod$B$ and the induced silting complex $\Q=Q_{-1}\xrightarrow{\sigma}Q_{0}$ with $Q_{i}\in$ proj$B$. Since $\Q$ is a separating silting complex, we can write $\Theta=\Theta_{\T}\oplus \Theta_{\F}$ where $\Theta_{\T}\in$ add$\Theta\bigcap\T(\Q)$ and $\Theta_{\F}\in$ add$\Theta\bigcap\F(\Q)$. We consider the following exact sequence in mod$B$
$$0\rightarrow \text{H}^{-1}(\Q)\rightarrow Q_{-1}\xrightarrow{\sigma}Q_{0}\rightarrow \text{H}^{0}(\Q)\rightarrow 0.$$
Since $\text{H}^{0}(\Q)\in\T(\Q)=\X(\P)$, by Proposition \ref{prop2.12}, we have Pd$_{B}\text{H}^{0}(\Q)\leq1$ and so, $\text{H}^{-1}(\Q)$ is a projective $B$-module. By \cite[Proposition 5.4]{Hoshino}, $\text{H}^{-1}(\Q)\in$ add$\Theta_{\F}$. Now, we define an $A$-module $$\Theta'=DB\oplus\E(\Theta_{\F})\oplus\H(\Theta_{\T})\oplus\H(\text{H}^{0}(\Q)).$$
By Lemma \ref{lem1-4}, we know that $A\cong\text{End}_{B}(\Q)=\H(\text{H}^{0}(\Q))\oplus\E(\text{H}^{-1}(\Q))\in$ add$\Theta'$. Hence, $\Theta'$ is a generator and cogenerator of mod$A$.
Dually, we apply Lemma \ref{lemma2.0}(3), Proposition \ref{proposition 2.10}, Lemma \ref{lemma2.12} and  Lemma \ref{lemm2.16} to the 2-term silting complex $\Q$ in $K^{b}(\text{proj}B)$ and $\Theta'$. Then we obtain rep.dim $A$=rep.dim $\text{End}_{B}(\Q)\leq$ rep.dim $B$. Therefore, rep.dim $A$= rep.dim $B$.
\end{proof}
The following result is directly from Theorem \ref{main result}.
\begin{corollary}  Let $A$ be a finite dimensional hereditary algebra. If $\P$ is a 2-term separating  stilting complex with {\rm$B=\text{End}_{D^{b}(A)}(\P)$}, then {\rm rep.dim $B\leq $3}. Moreover, if $A$ is representation-finite, then $B$ is also representation-finite.
\end{corollary}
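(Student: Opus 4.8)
The plan is to deduce this directly from Theorem \ref{main result} together with two classical facts of Auslander. First I would note that since $A$ is hereditary we have gl.dim$A\leq1$, so that every finitely generated right $A$-module has injective dimension at most $1$; in particular Id$_{A}X\leq1$ holds for every $X\in\F(\P)$. Hence the hypotheses of Theorem \ref{main result} are satisfied for the separating $2$-term silting complex $\P$, and we obtain rep.dim$B=$rep.dim$A$.

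It then remains only to recall what is known about the right-hand side. Auslander proved that the representation dimension of a finite dimensional hereditary algebra is at most $3$ (cf.\ \cite{Auslander}), which immediately gives rep.dim$B=$rep.dim$A\leq3$. For the last assertion, one uses Auslander's characterization of representation-finite algebras as precisely those of representation dimension at most $2$: if $A$ is representation-finite then rep.dim$A\leq2$, whence rep.dim$B=$rep.dim$A\leq2$, and therefore $B$ is representation-finite as well.

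The argument is essentially formal once Theorem \ref{main result} is available; the only point that deserves a moment's attention is verifying that the hereditary hypothesis supplies exactly the homological restriction on $\F(\P)$ required by that theorem (and that a finite product of algebras of representation dimension at most $3$ again has representation dimension at most $3$, covering possible semisimple blocks), so there is no genuine obstacle to carrying it out.
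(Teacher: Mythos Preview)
Your proof is correct and is exactly the intended argument: the paper states that the corollary ``is directly from Theorem \ref{main result}'' and gives no further proof, and your verification that hereditariness supplies the injective-dimension hypothesis on $\F(\P)$, together with Auslander's bound rep.dim$A\leq3$ for hereditary algebras and his characterization of representation-finite type via rep.dim$\leq2$, is precisely what is needed. The parenthetical remark about finite products of algebras is unnecessary here, since Theorem \ref{main result} already yields the equality rep.dim$B=$rep.dim$A$ without any decomposition.
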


Let  $T$ be a  classical tilting right $A$-module. If one take $\P$ as the projective resolution of $T$, then $\P$ is a 2-term silting complex such that $\T(\P)=\T(T)=\text{Ker} \Ext_{A}^{1}(T,-)$ and $\F(\P)=\F(T)=\text{Ker} \Hom_{A}(T,-)$. Moreover, one can see that ($\X(\P)$, $\Y(\P)$)=($\X(\T)$, $\Y(\T)$) in mod$B$, where  $B=\text{End}_{A}(T)=\text{End}_{D^{b}(A)}(\P)$. It is easy to prove that $\P$ is separating when $T$ is separating. If $T$ is splitting, then Id$_{A}X\leq1$ for any $X\in\F(\T)=\F(\P)$. Thus, we have the following consequence.
\begin{corollary}\label{cor1} {\rm\cite[Theorem 3.1]{ChenHu}}  Let $A$ be a finite dimensional algebra and {\rm$B=\text{End}_{A}(T)$}. If $T$ is a separating and splitting  tilting $A$-module, then {\rm rep.dim $A$= rep.dim $B$}.
\end{corollary}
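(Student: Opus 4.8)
The plan is to realize $T$ as a $2$-term silting complex and then invoke Theorem~\ref{main result}. First I would let $\P=(P^{-1}\xrightarrow{d}P^{0})$ be a minimal projective resolution of $T$; this is a $2$-term complex in $K^{b}(\text{proj}A)$ since a classical tilting module satisfies $\text{pd}_{A}T\leq1$, and one has $\text{H}^{-1}(\P)=0$, $\text{H}^{0}(\P)=T$. The classical tilting axioms for $T$ translate into the silting axioms for $\P$: $\Ext_{A}^{1}(T,T)=0$ gives $\Hom_{K^{b}(\text{proj}A)}(\P,\Sigma\P)=0$, and the coresolution $0\to A\to T^{0}\to T^{1}\to0$ with $T^{i}\in\text{add}T$ gives a triangle $A\to\P'\to\P''\to\Sigma A$ with $\P',\P''\in\text{add}\P$, whence $A\in\text{thick}\,\P$ and $\text{thick}\,\P=K^{b}(\text{proj}A)$. (As $\Hom_{K^{b}(\text{proj}A)}(\P,\Sigma^{-1}\P)=0$ trivially, $\P$ is even tilting.) Under $\text{H}^{0}(\P)=T$ one gets $B=\End_{D^{b}(A)}(\P)\cong\End_{A}(T)$, and the homologically defined subcategories reduce to the classical ones: $\T(\P)=\Ker\Ext_{A}^{1}(T,-)=\T(T)$ and $\F(\P)=\Ker\Hom_{A}(T,-)=\F(T)$ in $\text{mod}A$, and $(\X(\P),\Y(\P))=(\X(T),\Y(T))$ in $\text{mod}B$.

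With this dictionary the two hypotheses of Theorem~\ref{main result} are immediate. Since $T$ is separating, $(\T(T),\F(T))=(\T(\P),\F(\P))$ is a split torsion pair in $\text{mod}A$, i.e.\ $\P$ is separating. Since $T$ is splitting, $(\X(T),\Y(T))=(\X(\P),\Y(\P))$ is split in $\text{mod}B$, and by a standard fact of classical tilting theory this is equivalent to $\text{id}_{A}X\leq1$ for every $X\in\F(T)=\F(\P)$. Hence $\P$ is a $2$-term separating silting complex in $K^{b}(\text{proj}A)$ with $\text{Id}_{A}X\leq1$ for all $X\in\F(\P)$, so Theorem~\ref{main result} yields $\text{rep.dim}\,B=\text{rep.dim}\,A$.

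The only step requiring any care is the bookkeeping of the first paragraph: verifying that the homotopy-theoretic axioms for $\P$ really do coincide with the classical tilting axioms for $T$, and that the subcategories $\T,\F,\X,\Y$ cut out inside $D^{b}(A)$ by $\Hom_{D^{b}(A)}(\P,-)$ and $\Hom_{D^{b}(A)}(\P,\Sigma-)$ agree with the classical torsion and torsion-free classes associated to $T$. All of this is routine and is summarized in the paragraph preceding the statement, so there is no real obstacle; the corollary is simply the specialization of Theorem~\ref{main result} to silting complexes coming from classical tilting modules.
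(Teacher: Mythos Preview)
Your proposal is correct and follows essentially the same approach as the paper: take $\P$ to be the projective resolution of $T$, identify $(\T(\P),\F(\P))$ and $(\X(\P),\Y(\P))$ with the classical torsion pairs of $T$, observe that ``$T$ separating'' gives ``$\P$ separating'' and that ``$T$ splitting'' gives $\mathrm{id}_A X\le 1$ for all $X\in\F(\P)$, and then apply Theorem~\ref{main result}. In fact the paper's own argument is precisely the paragraph preceding the corollary, which you have reproduced with somewhat more detail.
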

\section{Examples}
Firstly, we would like to give two examples to illustrate that  anyone of the conditions of Theorem \ref{main result} cannot be removed.
\begin{example} Let $A$ be a path algebra over a filed $k$ and its quiver $Q_{A}$ given by
$$\xymatrix{&\ar[dl]2& \\
1&&4\ar[ul]\ar[dl] \\
&\ar[ul]3&           }$$
Set a right $A$-module $T=1\oplus{\begin{array}{c}
                  4 \\
                  3 \\
                  1 \\
                \end{array}}\oplus{\begin{array}{c}
                  4 \\
                  2 \\
                  1 \\
                \end{array}}\oplus 4$. It is easy to see that $T$ is a tilting module. Let $\P$ be the projective resolution of $T$. Then $\P$ is a 2-term silting complex. Since $A$ is a hereditary algebra, $\P$ is a splitting silting complex. Then $B=\text{End}_{D^{b}(A)}(\P)=\text{End}_{A}(T)$ is given by the quiver $Q_{B}$
              $$\xymatrix{&\ar[dl]_{\beta}2& \\
1&&4\ar[ul]_{\alpha}\ar[dl]^{\gamma} \\
&\ar[ul]^{\delta}3&           }$$
with relations $\alpha\beta=0$ and $\gamma\delta=0$. The AR-quiver of mod$B$ is given by
\begin{equation}\label{eqd3-1}
  \begin{split}
\xymatrix@C=1.5em@R=1.5em{
  &*-<1.2mm>{\text{{
     $\begin{array}{ccc}
        2\\1
     \end{array}$}}}\ar[dr]\ar@{.}[rr]
  &&*-<1mm>{\text{{
     $\begin{array}{ccc}
        3
     \end{array}$}}}\ar[dr]\ar@{.}[rr]&&*-<1mm>{\text{{
     $\begin{array}{ccc}
        4\\2
     \end{array}$}}}\ar[dr]& \\
    *-<1mm>{ \text{{
     $\begin{array}{ccc}
        1
     \end{array}$}}}\ar[ur]\ar[dr]\ar@{.}[rr]&&*-<1mm>{\text{{
     $\begin{array}{ccc}
        2~3\\1
     \end{array}$}}}\ar[ur]\ar[dr]&&*-<1mm>{\text{{
     $\begin{array}{ccc}
        4\\2~3
     \end{array}$}}}\ar[ur]\ar[dr]\ar@{.}[rr]&&*-<1mm>{\text{{
     $\begin{array}{ccc}
        4
     \end{array}$}}}  \\
     &*-<1mm>{\text{{
     $\begin{array}{ccc}
        3\\1
     \end{array}$}}}\ar[ur]\ar@{.}[rr]&&*-<1mm>{\text{{
     $\begin{array}{ccc}
        2
     \end{array}$}}}\ar[ur]\ar@{.}[rr]&&*-<1mm>{\text{{
     $\begin{array}{ccc}
        4\\3
     \end{array}$}}}\ar[ur]&  }
\end{split}
\end{equation}
where the dot lines are the $\tau$-orbits.
     The induced splitting torsion pair ($\X(\P)$, $\Y(\P)$) in mod$B$ is given by
     \begin{align*}
       \X(\P) &= \{\text{{
     $\begin{array}{ccc}
        4\\2
     \end{array}$}}, \text{{
     $\begin{array}{ccc}
        4\\3
     \end{array}$}},\text{{
     $\begin{array}{ccc}
        4
     \end{array}$}} \}\\
       \Y(\P) &= \{\text{{
     $\begin{array}{ccc}
        1
     \end{array}$}},\text{{
     $\begin{array}{ccc}
        2\\1
     \end{array}$}},\text{{
     $\begin{array}{ccc}
        3\\1
     \end{array}$}},\text{{
     $\begin{array}{ccc}
        2~ 3\\1
     \end{array}$}},\text{{
     $\begin{array}{ccc}
        3
     \end{array}$}}, \text{{
     $\begin{array}{ccc}
        2
     \end{array}$}},\text{{
     $\begin{array}{ccc}
     4\\2~3
     \end{array}$}}\}
     \end{align*}
     Note that pd$_{B}S(4)=2>1$ where $S(4)\in\X(\P)$ is a simple $B$-module corresponding to the vertex 4 of $Q_{B}$. Thus, by Lemma \ref{prop2.12}, $\P$ is not a separating silting complex. It is easy to see that rep.dim$B=2<$rep.dim$A=3$.
\end{example}
The following example given by \cite{Abe}. Due to this example, we can see that the homological dimension restriction on $\F(\P)$ cannot be removed.
\begin{example} Let $A$ be a $k$-algebra given by the following quiver
              $$\xymatrix{&\ar[dl]_{\beta}2& \\
1&&4\ar[ul]_{\alpha}\ar[dl]^{\gamma} \\
&\ar[ul]^{\delta}3&           }$$
with relations $\alpha\beta=0$ and $\gamma\delta=0$. Then the AR-quiver of mod$A$ is form as the graph (\ref{eqd3-1}). There is a splitting torsion pair ($\T$, $\F$) in mod$A$
\begin{align*}
       \T &= \{\text{{
     $\begin{array}{ccc}
        4\\2~3
     \end{array}$}},\text{{
     $\begin{array}{ccc}
        4\\2
     \end{array}$}}, \text{{
     $\begin{array}{ccc}
        4\\3
     \end{array}$}},\text{{
     $\begin{array}{ccc}
        4
     \end{array}$}} \}\\
       \F &= \{\text{{
     $\begin{array}{ccc}
        1
     \end{array}$}},\text{{
     $\begin{array}{ccc}
        2\\1
     \end{array}$}},\text{{
     $\begin{array}{ccc}
        3\\1
     \end{array}$}},\text{{
     $\begin{array}{ccc}
        2~3\\1
     \end{array}$}},\text{{
     $\begin{array}{ccc}
        3
     \end{array}$}}, \text{{
     $\begin{array}{ccc}
        2
     \end{array}$}}\}.
     \end{align*}
The 2-term silting complex $\P$ is given by the direct sums of the following complexes in $K^{b}(\text{proj}A)$
\begin{align*}
  \P_{1} &=\text{{
     $\begin{array}{ccc}
       0
     \end{array}$}}\rightarrow  \text{{
     $\begin{array}{ccc}
        4\\2~3
     \end{array}$}}\\
  \P_{2} &=\text{{
     $\begin{array}{ccc}
        3\\1
     \end{array}$}}\rightarrow \text{{
     $\begin{array}{ccc}
        4\\2~3
     \end{array}$}}\\
     \P_{3} &=\text{{
     $\begin{array}{ccc}
        2\\1
     \end{array}$}}\rightarrow \text{{
     $\begin{array}{ccc}
        4\\2~3
     \end{array}$}}\\
  \P_{4} &=  \text{{
     $\begin{array}{ccc}
       1
     \end{array}$}}\rightarrow  \text{{
     $\begin{array}{ccc}
        0
     \end{array}$}}.
\end{align*}
In this case, $\T=\T(\P)$ and $\F=\F(\P)$. Thus, the silting complex $\P$ is separating. Moreover, the endomorphism algebra $B=\End_{D^{b}(A)}(\P)$ is a path algebra given by the following quiver, whose underlying graph is a Euclidean graph of $\widetilde{A_{3}}$

$$\xymatrix{&\ar[dl]2\ar[dr]& \\
1&&4 \\
&\ar[ul]3\ar[ur]&           }$$
Thus,  $\End_{D^{b}(A)}(\P)$ is representation-infinite. Thus, rep.dim$A=2<$rep.dim$B=3$. Note that id$_{A}S(1)=2$ where $S(1)\in\F(\P)$  is a simple $A$-module corresponding to the vertex 1. Therefore, the separating silting complex $\P$ does not satisfy the homological dimension restriction on $\F(\P)$.
\end{example}
Secondly, we should illustrate that Theorem \ref{main result}  gives a proper generalization of the result in \cite{ChenHu}. It means that one can find a 2-term silting complex $\P$ satisfying the assumption in Theorem \ref{main result}, but it cannot be induced by a tilting module. In fact, this silting complex  is not rare.
\begin{example}\label{ex4.3} Let $A$ be a path algebra given by the following quiver
$$\xymatrix{3\ar[r] &2\ar[r] &1   }$$
The AR-quiver is given by
$$\xymatrix@C=1.8em@R=1.8em{*+<1mm>{\text{{
     $\begin{array}{ccc}
1\\
     \end{array}$}}}\ar[dr]&&*+<1mm>{\text{{
     $\begin{array}{ccc}
       2\\
     \end{array}$}}}\ar[dr]&&*+<1mm>{\text{{
     $\begin{array}{ccc}
       3\\
     \end{array}$}}} \\
&*-<1mm>{\text{{
     $\begin{array}{ccc}
       2\\1
     \end{array}$}}}\ar[ur]\ar[dr]&&*-<1mm>{\text{{
     $\begin{array}{ccc}
       3\\2
     \end{array}$}}}\ar[ur]& \\
     &&*-<1mm>{\text{{
     $\begin{array}{ccc}
       3\\2\\1
     \end{array}$}}}\ar[ur]&&}$$
\end{example}
Then one can take a splitting torsion pair ($\T$, $\F$) as follows.
\begin{align*}
  \T &=\{\text{{
     $\begin{array}{ccc}
       2
     \end{array}$}}, \text{{
     $\begin{array}{ccc}
       3\\2
     \end{array}$}},\text{{
     $\begin{array}{ccc}
       3
     \end{array}$}}\} \\
 \F &=\{\text{{
     $\begin{array}{ccc}
1\\
     \end{array}$}}, \text{{
     $\begin{array}{ccc}
2\\1
     \end{array}$}},\text{{
     $\begin{array}{ccc}
3\\2\\1
     \end{array}$}}\}
\end{align*}
Note that this torsion pair cannot be induced by tilting modules in mod$A$. Let $\P$ be a 2-term complex given by the direct sums of the following complex in $K^{b}(\text{proj}A)$
\begin{align*}
  \P_{1}=\text{{
     $\begin{array}{ccc}
       1
     \end{array}$}}\rightarrow \text{{
     $\begin{array}{ccc}
       2\\1
     \end{array}$}} && \P_{2}=\text{{
     $\begin{array}{ccc}
       1
     \end{array}$}}\rightarrow \text{{
     $\begin{array}{ccc}
       3\\2\\1
     \end{array}$}} && \P_{3}=\text{{
     $\begin{array}{ccc}
       3\\2\\1
     \end{array}$}}\rightarrow\text{{
     $\begin{array}{ccc}
       0
     \end{array}$}}
\end{align*}
It is easy to check that $\P$ is a silting complex such that $\T=\T(\P)$ and $\F=\F(\P)$. Thus, $\P$ is a separating silting complex. It is not difficult to see that $\P$ is not a projective resolution of a tilting module. Since $A$ is a hereditary algebra, by Theorem \ref{main result}, $\text{End}_{D^{b}(A)}(\P)$ is also representation-finite.
\section{The representation dimension of {\rm$\End_{A}(\text{H}^{0}(\P))$}}
Let $A$ be a finite dimensional algebra and $\P$ be a silting  complex in {\rm $K^{b}(\text{proj}A)$}. We set $\mathfrak{A}=\text{ann}_{A}(\P)$ the annihilator of $\text{H}^{0}(\P)$. It is well-known that $\text{H}^{0}(\P)$ is a tilting $A/\mathfrak{A}$-module see \cite[Lemma 3.1]{Abe}. We denote by $\text{Fac}M$ is a full subcategory consisting of all factor modules of the finite copies of a $A$-module $M$, i.e $$\text{Fac}M=\{~X\in \text{mod}\Lambda~|~M^{(n)}\twoheadrightarrow X~\text{for some integer}~n~\}.$$
\begin{proposition}\label{pro5.1}Let $A$ be a finite dimensional algebra and $\P$ be a 2-term  splitting and separating silting  complex in {\rm $K^{b}(\text{proj}A)$}. Then {\rm$\text{H}^{0}(\P)$} is a splitting and separating tilting $A/\mathfrak{A}$-module.
\end{proposition}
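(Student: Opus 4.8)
The plan is to verify, one at a time, that the tilting $A/\mathfrak{A}$-module $T := \text{H}^{0}(\P)$ induces a torsion pair in $\text{mod}(A/\mathfrak{A})$ that is both splitting and separating. Recall that for a classical tilting module $T$ over an algebra $\Lambda$ one has the torsion pair $(\T(T),\F(T)) = (\text{Fac}\,T,\,\Ker\Hom_{\Lambda}(T,-))$ in $\text{mod}\,\Lambda$, and a dual torsion pair $(\X(T),\Y(T))$ in $\text{mod}\,\End_{\Lambda}(T)$; being separating means $(\T(T),\F(T))$ splits, and being splitting means $(\X(T),\Y(T))$ splits. The first step is to identify these two torsion pairs concretely. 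Since $T\in\T(\P)$ is $\Ext$-projective in $\T(\P)$ by Lemma \ref{lemma2.14}(1), and since every module in $\text{mod}(A/\mathfrak{A})$ is in particular an $A$-module, I expect $\text{Fac}_{A/\mathfrak{A}}T$ to coincide with $\T(\P)\cap\text{mod}(A/\mathfrak{A})$, and $\Ker\Hom_{A/\mathfrak{A}}(T,-)$ to coincide with $\F(\P)\cap\text{mod}(A/\mathfrak{A})$. This matching is the technical heart of the argument and will use the description of $\T(\P)$, $\F(\P)$ in Theorem \ref{th1-1} together with the fact that $\Hom_{D^{b}(A)}(\P,-)$ restricted to $\C(\P)$ is exact.

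Granting the identification, separatingness of $T$ is almost immediate: if $0\to X'\to X\to X''\to 0$ is exact in $\text{mod}(A/\mathfrak{A})$ with $X'\in\F(T)$ and $X''\in\T(T)$, then by the identification the same sequence has $X'\in\F(\P)$ and $X''\in\T(\P)$, and since $\P$ is separating this splits in $\text{mod}\,A$, hence in $\text{mod}(A/\mathfrak{A})$. Equivalently one can argue via $\Ext^{1}$: separatingness of $T$ is the vanishing $\Ext^{1}_{A/\mathfrak{A}}(\T(T),\F(T))=0$; I would compare $\Ext^{1}_{A/\mathfrak{A}}$ with $\Ext^{1}_{A}$ on modules annihilated by $\mathfrak{A}$ (there is always a natural map, and here the relevant obstruction lives in a degree where the change-of-rings spectral sequence contributes nothing extra), and then invoke separatingness of $\P$, i.e. $\Ext^{1}_{A}(\T(\P),\F(\P))=0$.

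For the splitting property I would use the remark after the definition: $T=\text{H}^{0}(\P)$ is splitting iff $\Ext^{2}_{A/\mathfrak{A}}(\T(T),\F(T))=0$. Now I invoke the hypothesis that $\P$ is both splitting \emph{and} separating, so by the Remark following Theorem \ref{th1-1} $\P$ is a tilting complex, $A\cong\End_{D^{b}(B)}(\Q)$, $\Q$ is itself splitting and separating, and moreover $\text{Id}_{A}X\le 1$ for $X\in\F(\P)$ is automatic from $\P$ being separating together with the splitting hypothesis (this is exactly what Proposition \ref{prop2.12} and Lemma \ref{lemma2.14}(2) give: $\F(\P)$ has an $\Ext$-injective generator-cogenerator $\text{H}^{-1}(\nu\P)$ of injective dimension at most $1$). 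Since $\T(T)\subseteq\T(\P)$ and $\F(T)\subseteq\F(\P)$ under the identification, I would bound $\Ext^{2}_{A/\mathfrak{A}}(\T(T),\F(T))$ by $\Ext^{2}_{A}(\T(\P),\F(\P))$, which vanishes because $\F(\P)$ consists of modules of injective dimension $\le 1$ over $A$; a short change-of-rings argument transfers this vanishing down to $A/\mathfrak{A}$. Hence $T$ is splitting.

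The main obstacle I anticipate is the first step — proving that the torsion pairs of the tilting module $\text{H}^{0}(\P)$ over $A/\mathfrak{A}$ are precisely the restrictions of $(\T(\P),\F(\P))$ to $\text{mod}(A/\mathfrak{A})$, and correspondingly relating $(\X(\text{H}^{0}(\P)),\Y(\text{H}^{0}(\P)))$ to $(\X(\P),\Y(\P))$. One inclusion ($\text{Fac}\,T\subseteq\T(\P)$, etc.) is formal from $\Ext$-projectivity; the reverse inclusion requires showing every module in $\T(\P)$ killed by $\mathfrak{A}$ is actually a quotient of a power of $T$, for which I would use that $T$ is a tilting $A/\mathfrak{A}$-module so $\text{Fac}\,T = \Ker\Ext^{1}_{A/\mathfrak{A}}(T,-)$, and then translate an $A$-module statement into an $A/\mathfrak{A}$-module statement. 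Once this dictionary is in place, the splitting and separating conclusions follow by the two short cohomological comparisons sketched above, and the proposition is proved.
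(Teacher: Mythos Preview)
Your overall plan matches the paper's: identify $\T(\text{H}^0(\P)_{A/\mathfrak{A}})$ and $\F(\text{H}^0(\P)_{A/\mathfrak{A}})$ with $\T(\P)$ and $\F(\P)\cap\text{mod}(A/\mathfrak{A})$, then pass from $\Ext_{A/\mathfrak{A}}$ to $\Ext_A$ via the Yoneda embedding for the full exact subcategory $\text{mod}(A/\mathfrak{A}) \hookrightarrow \text{mod}\,A$ (this is the Oort result the paper cites). The identification step is easier than you fear: since $\T(\P) = \text{Fac}(\text{H}^0(\P)_A)$ and $\mathfrak{A}$ annihilates $\text{H}^0(\P)$, one has $\T(\P) \subseteq \text{mod}(A/\mathfrak{A})$ outright, hence $\T(T) = \T(\P)$; the other inclusion follows from $\Hom_{D^b(A)}(\P, Y) \cong \Hom_A(\text{H}^0(\P), Y)$ for any module $Y$.

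There are two genuine slips to fix. First, you have the separating condition backwards: a torsion pair $(\T, \F)$ is split precisely when $\Ext^1(\F, \T) = 0$ (it is the canonical sequence $0 \to tM \to M \to M/tM \to 0$ that must split), so you want $\Ext^1_{A/\mathfrak{A}}(\F(T), \T(T)) = 0$; this does follow from $\Ext^1_{A/\mathfrak{A}} \hookrightarrow \Ext^1_A$ together with $\P$ separating, exactly as the paper argues. Second, your appeal to Proposition~\ref{prop2.12} for ``$\text{Id}_A X \le 1$ for $X \in \F(\P)$'' is wrong: that proposition \emph{assumes} this bound, it does not derive it, and there is no reason to expect it from splitting plus separating alone. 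Fortunately you do not need it. By the Remark after the definition, $\P$ splitting already means $\Ext^2_A(\T(\P), \F(\P)) = 0$, so $\Ext^2_{A/\mathfrak{A}}(\T(T), \F(T)) \hookrightarrow \Ext^2_A(\T(\P), \F(\P)) = 0$ directly. With this correction your splitting argument is actually cleaner than the paper's: the paper proves the stronger statement $\text{id}_{A/\mathfrak{A}} Y \le 1$ for every $Y \in \F(T)$, which forces a case division on whether the test module $X$ lies in $\T(\P)$ or in $\F(\P)$, the latter case handled by a dimension shift through the $A/\mathfrak{A}$-injective hull of $Y$ and the observation $\Omega^{-1}_{A/\mathfrak{A}}(Y) \in \T(\P)$.
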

\begin{proof} Firstly, we prove that $\text{H}^{0}(\P)$ is a splitting  tilting $A/\mathfrak{A}$-module.\par
 The canonical full embedding $\text{mod}A/\mathfrak{A}\hookrightarrow \text{mod}A$ induces $\text{Fac}(\text{H}^{0}(\P)_{A/\mathfrak{A}})=\text{Fac}(\text{H}^{0}(\P)_{A})$. Since $\text{H}^{0}(\P)$ is a tilting $A/\mathfrak{A}$-module, $\T(\text{H}^{0}(\P)_{A/\mathfrak{A}})=\text{Fac}(\text{H}^{0}(\P)_{A/\mathfrak{A}})$. It is well-known that $\T(\P)=\text{Fac}(\text{H}^{0}(\P)_{A})$. Hence, $\T(\text{H}^{0}(\P)_{A/\mathfrak{A}})=\T(\P)$. For any indecomposable $Y\in$ mod$A/\mathfrak{A}$. There is a short exact sequence in mod$A/\mathfrak{A}$
\begin{equation}\label{eq5.1}
  0\rightarrow Y\rightarrow \text{E}(Y)\rightarrow \Omega^{-1}_{A/\mathfrak{A}}(Y)\rightarrow0
\end{equation}
where $\text{E}(Y)$ is the injective hull of $Y$ and $\Omega^{-1}_{A/\mathfrak{A}}(Y)$ is the 1-th cosyzygy of $Y$. Note that $D(A/\mathfrak{A})\in\T(\text{H}^{0}(\P)_{A/\mathfrak{A}})$. Then, $\Omega^{-1}_{A/\mathfrak{A}}(Y)\in\T(\text{H}^{0}(\P)_{A/\mathfrak{A}})$ and so $\Omega^{-1}_{A/\mathfrak{A}}(Y)\in\T(\P)$ as a right $A$-module. \par
Assume that $Y\in\F(\text{H}^{0}(\P)_{A/\mathfrak{A}})$. We claim that id$_{A/\mathfrak{A}}Y\leq1$.  It suffices to prove that $\Ext^{2}_{A/\mathfrak{A}}(X,Y)=0$ for any indecomposable $A/\mathfrak{A}$-module $X$.\par
 Since $\P$ is separating, ($\T(\P)$, $\F(\P)$) is splitting. Note that $X$ is also an indecomposable $A$-module. Then $X$ is either in $\F(\P)$ or $\T(\P)$. \par
 Assume that $X\in\F(\P)$.  Applying $\Hom_{A/\mathfrak{A}}(X,-)$ to the sequence (\ref{eq5.1}), we have the exact sequence
$$\Ext_{A/\mathfrak{A}}^{1}(X,\Omega^{-1}_{A/\mathfrak{A}}(Y))\rightarrow
\Ext_{A/\mathfrak{A}}^{2}(X,Y)\rightarrow\Ext_{A/\mathfrak{A}}^{2}(X,\text{E}(Y))$$
Note that there is a natural monoic map $\Ext_{A/\mathfrak{A}}^{1}(X,\Omega^{-1}_{A/\mathfrak{A}}(Y))\hookrightarrow\Ext_{A}^{1}(X,\Omega^{-1}_{A/\mathfrak{A}}(Y))$ see \cite{Oort}. Since $\Omega^{-1}_{A/\mathfrak{A}}(Y)\in\T(\P)$, we have $\Ext_{A}^{1}(X,\Omega^{-1}_{A/\mathfrak{A}}(Y))=0$ and hence, $\Ext_{A/\mathfrak{A}}^{1}(X,\Omega^{-1}_{A/\mathfrak{A}}(Y))$ $=0$. It implies that $\Ext_{A/\mathfrak{A}}^{2}(X,Y)=0$.\par
 Now we assume $X\in\T(\P)$.
By \cite[Lemma 2.13(2)]{Abe}, we have
\begin{align*}
  \Hom_{D^{b}(A)}(\P,Y)&\cong\Hom_{A}(\text{H}^{0}(\P),Y)  \\
   &\cong \Hom_{A/\mathfrak{A}}(\text{H}^{0}(\P),Y)\\
   &=0.
\end{align*}
Thus, $Y\in\F(\P)$ as right $A$-module. Note that there is a natural embedding $\Ext_{A/\mathfrak{A}}^{2}(X,Y)\hookrightarrow\Ext_{A}^{2}(X,Y)$ induced by the canonical full embedding $\text{mod}A/\mathfrak{A}\hookrightarrow \text{mod}A$. Since $\P$ is splitting, we have $\Ext_{A}^{2}(X,Y)=0$. Thus, $\Ext_{A/\mathfrak{A}}^{2}(X,Y)=0$.\par
Next, we claim that $\text{H}^{0}(\P)$ is a separating  tilting $A/\mathfrak{A}$-module.\par
Indeed, for any pair $X\in\F(\text{H}^{0}(\P)_{A/\mathfrak{A}})$, $Y\in \T(\text{H}^{0}(\P)_{A/\mathfrak{A}})$, it is easy to see that $X\in\F(\P)$ and $Y\in \T(\P)$. Note that there is a monoic map $\Ext_{A/\mathfrak{A}}^{1}(X,Y)\hookrightarrow \Ext_{A}^{1}(X,Y)$. Since $\P$ is separating, $\Ext_{A}^{1}(X,Y)=0$ and so $\Ext_{A/\mathfrak{A}}^{1}(X,Y)=0$. Then, the claim holds.
\end{proof}
By Corollary \ref{cor1} and Proposition \ref{pro5.1}, one can get the following consequence.
\begin{corollary}Let $A$ be a finite dimensional algebra and $\P$ be a 2-term  splitting and separating silting  complex in {\rm $K^{b}(\text{proj}A)$}. Then {\rm rep.dim$\End_{A}(\text{H}^{0}(\P))=$ rep.dim$A/\mathfrak{A}$}.
\end{corollary}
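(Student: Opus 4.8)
The plan is to reduce the statement directly to the tilting-module case that has already been settled, using Proposition \ref{pro5.1} as the bridge. Write $\Lambda=A/\mathfrak{A}$ and $T=\text{H}^{0}(\P)$. By \cite[Lemma 3.1]{Abe} we already know that $T$ is a tilting $\Lambda$-module, and Proposition \ref{pro5.1} strengthens this: since $\P$ is assumed both splitting and separating, $T$ is a \emph{splitting and separating} tilting $\Lambda$-module. So the hypotheses of Corollary \ref{cor1} are met for the algebra $\Lambda$ and the module $T$.

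The second step is to invoke Corollary \ref{cor1} (that is, \cite[Theorem 3.1]{ChenHu}) for $\Lambda$ and $T$. It yields rep.dim$\Lambda=$rep.dim$\End_{\Lambda}(T)$, in other words rep.dim$A/\mathfrak{A}=$rep.dim$\End_{A/\mathfrak{A}}(\text{H}^{0}(\P))$.

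The last step is a harmless change-of-rings identification: since $\mathfrak{A}=\text{ann}_{A}(\text{H}^{0}(\P))$ annihilates $\text{H}^{0}(\P)$, every $A$-linear endomorphism of $\text{H}^{0}(\P)$ is automatically $A/\mathfrak{A}$-linear, and conversely, so $\End_{A}(\text{H}^{0}(\P))=\End_{A/\mathfrak{A}}(\text{H}^{0}(\P))$ as algebras. Chaining the three steps gives rep.dim$\End_{A}(\text{H}^{0}(\P))=$rep.dim$A/\mathfrak{A}$, as claimed.

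There is essentially no genuine obstacle here; the proof is a direct concatenation of Proposition \ref{pro5.1} with Corollary \ref{cor1}. The only points requiring a line of care are (i) confirming that Proposition \ref{pro5.1} delivers precisely the \emph{separating and splitting} hypothesis demanded by \cite[Theorem 3.1]{ChenHu}, and (ii) the endomorphism-ring identification under the canonical quotient $A\twoheadrightarrow A/\mathfrak{A}$ — both of which are immediate from the definition of $\mathfrak{A}$ as the annihilator of $\text{H}^{0}(\P)$.
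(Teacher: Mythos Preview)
Your proposal is correct and follows exactly the paper's own argument: the corollary is obtained by combining Proposition \ref{pro5.1} with Corollary \ref{cor1}. The explicit identification $\End_{A}(\text{H}^{0}(\P))=\End_{A/\mathfrak{A}}(\text{H}^{0}(\P))$ you add is a welcome clarification but not a departure from the intended route.
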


\bibliographystyle{amsplain}

\end{document}